\documentclass[11pt,a4paper]{amsart}
\usepackage[utf8]{inputenc}
\usepackage{faktor}
\usepackage[T1]{fontenc}
\usepackage{lmodern}
\usepackage[english]{babel}
\usepackage{amsmath}
\usepackage{xfrac}
\usepackage{esint}
\usepackage{stmaryrd,mathrsfs,bm,amsthm,mathtools,yfonts,amssymb,color,braket,booktabs,graphicx,graphics,amsfonts}
\usepackage{latexsym,microtype,indentfirst,hyperref}
\usepackage{xcolor}
\usepackage{courier}
\usepackage[colorinlistoftodos]{todonotes}
\usepackage{url}

\title{ Quantitative $W^{2, \, p}$-stability for almost Einstein hypersurfaces }
\author{ Stefano Gioffrè}

\theoremstyle{reference}
\newtheorem{teo}{Theorem}[section]
\newtheorem{prop}[teo]{Proposition}
\newtheorem{cor}[teo]{Corollary}
\newtheorem{lemma}[teo]{Lemma}
\numberwithin{equation}{section}

\newcommand{\enne}{\mathbb{N}}
\newcommand{\erre}{\mathbb{R}}
\newcommand{\esse}{\mathbb{S}}

\newcommand{\B}{\mathbb{B}}

\newcommand{\D}{\mathbb{D}}

\newcommand{\daA}[2]{\colon #1 \longrightarrow #2}

\newcommand{\restr}[2]{\left. #1 \right|_{#2}}
\newcommand{\dde}[1]{\frac{\partial}{\partial #1}}
\newcommand{\ddt}{\frac{d}{dt}}

\newcommand{\tRic}{\mathring{\Ric}}
\newcommand{\Rm}{\overline{R}}

\newcommand{\nablag}{_{g}\nabla}
\newcommand{\hessg}{_{g}\nabla^2}
\newcommand{\Ch}{\mathcal{C}}
\newcommand{\nomizu}{\owedge}
\newcommand{\gu}{\mathfrak{u}}
\newcommand{\gf}{\mathfrak{f}}
\newcommand{\gv}{\mathfrak{v}}
\newcommand{\gw}{\mathfrak{w}}
\newcommand{\Gammag}{_{g}\Gamma}

\renewcommand{\epsilon}{\varepsilon}
\renewcommand{\phi}{\varphi}
\renewcommand{\theta}{\vartheta}
\renewcommand{\ni}{\nu}

\DeclarePairedDelimiter{\abs}{|}{|}
\DeclarePairedDelimiter{\coup}{(}{)}				

\DeclarePairedDelimiter{\norm}{\lVert}{\rVert}

\DeclareMathOperator{\id}{id}

\DeclareMathOperator{\R}{R}
\DeclareMathOperator{\Ric}{Ric}
\DeclareMathOperator{\Riem}{Riem}
\DeclareMathOperator{\tr}{tr}

\DeclareMathOperator{\sgn}{sgn}
\DeclareMathOperator{\divv}{div}
\DeclareMathOperator{\Vol}{Vol}

\DeclareMathOperator{\grad}{grad}

\DeclareMathOperator{\diam}{diam}
\DeclareMathOperator{\Sec}{Sec}
\DeclareMathOperator{\injrad}{injrad}

\date{}
\allowdisplaybreaks
\begin{document}
\begin{abstract}
Let $n \ge 3$, $p \in (1, \, +\infty)$ be given. Let $\Sigma$ be a $n$-dimensional, closed hypersurface in $\erre^{n+1}$. It is a well-known fact that if $\Sigma$ is an Einstein hypersurface with positive scalar curvature, then it is a round sphere. We give the stable version of this result: if a hypersurface is almost Einstein in a $L^p$-sense, then it is $W^{2, \, p}$-near to a sphere. We give a quantitative estimate of this fact.
\end{abstract}
\maketitle
\section{Introduction}
Let $\Sigma$ be a $n$-dimensional, closed submanifold in $\erre^{n+1}$. We say that $\Sigma$ is an Einstein manifold if the traceless Ricci tensor 
\begin{equation}\label{Einstein}
\tRic = \Ric - \frac{1}{n} R g
\end{equation}
is identically $0$. In the '$30$s Thomas (see \cite{Thomas}) and Fialkov (see \cite{Fialkow}) independently proved that an Einstein hypersurface $\Sigma$ in $\erre^{n+1}$ with positive scalar curvature is isometric to the round sphere. However, the stability properties of this result are still unclear. Recently there have been attempts to prove such results. For example, in \cite{Roth} the author shows that if the Ricci tensor of a hypersurface $M \subset \erre^{n+1}$ is $L^\infty$-near to a constant, then there exists a diffeomorphism $F$ between $M$ and the round sphere $\esse^n$ whose differential $dF$ is $L^\infty$-near to the identity. Still, the result shown in \cite{Roth} is qualitative. In this paper we show that under certain geometric assumptions a hypersurface with small $L^p$-norm of the traceless Ricci tensor $\tRic$ is $W^{2, \, p}$-near to a round sphere. Let us state the theorem. Here we denote:
\begin{center}
\begin{tabular}{ll}
$\Vol_n$ & $n$-dimensional Hausdorff measure.\\
$\esse^n$ & standard sphere in $\erre^{n+1}$. \\
$\sigma$ & standard metric on the sphere. \\
$g$ & restriction of the $\erre^{n+1}$-flat metric to $\Sigma$.   \\
$A$ & second fundamental form for $\Sigma$.  \\
$\diam \Sigma$ & diameter of $\Sigma$ \\
$\id$ & identity function from a set to itself.   
\end{tabular}
\end{center}
We also say that a hypersurface is \textit{convex} if it is the boundary of an open, convex set.
\begin{teo}\label{MainThm}
Let $\Sigma$ be a closed, convex $n$-dimensional hypersurface in $\erre^{n+1}$ with volume equal to the round sphere $\esse^n$. Let also $p \in (1, \, \infty)$ be given.  There exists a positive number $\delta$ depending on $n$, $p$, $\norm{A}_\infty$, $\diam \Sigma$ with the following property. If $\norm{\tRic}_{L^p_g(\Sigma)} \le \delta$, then there exist a smooth diffeomorphism $\psi \daA{\esse^n}{\Sigma}$, a vector $c=c(\Sigma) \in \erre^{n+1}$ and a constant $ C= C(n, \, p, \,  \norm{A}_\infty, \, \diam \Sigma)  $ such that
\begin{equation}\label{MainThmEq}
\norm{\psi - \id - c}_{W^{2, \, p}_\sigma(\esse^n)} \le C \norm{\tRic}_{L^p_g(\Sigma)}
\end{equation}
\end{teo}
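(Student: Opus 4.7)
The plan is to reduce the theorem to two independent quantitative estimates. The first is a pointwise identity, derived from the Gauss equation $R_{ijkl} = A_{ik}A_{jl} - A_{il}A_{jk}$ for a hypersurface in $\erre^{n+1}$: writing $A = \mathring{A} + (H/n)\,g$ with $H = \tr_g A$ and $\mathring{A}$ the traceless second fundamental form, a direct computation gives
\[
\tRic \; = \; \frac{(n-2)\,H}{n}\, \mathring{A} \; - \; \Bigl(\mathring{A}^2 - \frac{|\mathring{A}|^2}{n}\, g\Bigr),
\]
where $\mathring{A}^2$ denotes the endomorphism square of $\mathring{A}$ acting on $T\Sigma$. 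This identity is the algebraic core of the Thomas--Fialkov rigidity: with $H>0$, $\tRic \equiv 0$ forces $\mathring{A}\equiv 0$, hence $\Sigma$ is totally umbilical and thus a round sphere.

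I would use this identity to prove the linear coercivity
\[
\|\mathring{A}\|_{L^p_g(\Sigma)} \; \le \; C(n,p,\|A\|_\infty,\diam \Sigma)\, \|\tRic\|_{L^p_g(\Sigma)}.
\]
The main issue is the quadratic remainder $\mathring{A}^2 - (|\mathring{A}|^2/n)\,g$. Convexity forces $H \ge 0$ everywhere, and the volume normalization $\Vol_n(\Sigma) = \Vol_n(\esse^n)$, combined with the bounds on $\|A\|_\infty$ and $\diam \Sigma$, should yield a uniform lower bound $H \ge c_0 = c_0(n,\|A\|_\infty,\diam \Sigma) > 0$ (at least once $\delta$ is small enough that $\Sigma$ is confined to a neighbourhood of the sphere). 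Since $|\mathring{A}^2 - (|\mathring{A}|^2/n)g| \le c_n \|\mathring{A}\|_\infty |\mathring{A}|$ with $\|\mathring{A}\|_\infty \le \|A\|_\infty$, a bootstrap / continuity argument in $\delta$ lets one absorb the quadratic term on the left-hand side and obtain the estimate above.

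The second ingredient is an $L^p$--$W^{2,p}$ almost-umbilical stability statement for closed, convex hypersurfaces of prescribed volume: for $\|\mathring{A}\|_{L^p}$ small, there exist a smooth diffeomorphism $\psi: \esse^n \to \Sigma$ and a translation $c \in \erre^{n+1}$ with
\[
\|\psi - \id - c\|_{W^{2,p}_\sigma(\esse^n)} \; \le \; C(n,p,\|A\|_\infty, \diam \Sigma)\, \|\mathring{A}\|_{L^p_g(\Sigma)}.
\]
One would construct $\psi$ via the inverse Gauss map of $\Sigma$ and reduce the $W^{2,p}$-bound to $L^p$ elliptic regularity on $\esse^n$ applied to the linearization of the umbilical equation $A = \lambda\, g$. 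Chaining the two estimates yields \eqref{MainThmEq}. The principal obstacle is closing the bootstrap in the coercivity step: one must extract the uniform lower bound on $H$ from the prescribed geometric data and then absorb the quadratic remainder in a way that remains stable as $\delta \to 0$. A secondary technical point is establishing the almost-umbilical $W^{2,p}$-estimate for general $p$ and general dimension $n$, which requires $L^p$ Calder\'on--Zygmund theory on the round sphere in place of the $L^2$--Hodge arguments available when $p=2$.
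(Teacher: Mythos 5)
Your first ingredient --- the coercivity $\norm{\mathring{A}}_{L^p_g(\Sigma)} \le C \norm{\tRic}_{L^p_g(\Sigma)}$ --- is where the proposal breaks, and the failure is algebraic and pointwise, not merely a difficulty in closing the bootstrap. Your identity $\tRic = \frac{(n-2)H}{n}\mathring{A} - \bigl(\mathring{A}^2 - \frac{\abs{\mathring{A}}^2}{n}g\bigr)$ is correct, but at a point whose principal curvatures are $(a,0,\dots,0)$ with $a>0$ one computes $\Ric = HA - A^2 = 0$, hence $\tRic=0$, while $\abs{\mathring{A}}\sim a$; such points are perfectly compatible with convexity ($A\ge 0$), with $\norm{A}_\infty\le\Lambda$, and with a uniform lower bound $H\ge c_0>0$ (think of the cylindrical part of a smoothed capsule). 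So no lower bound on $H$ makes the linear term dominate the quadratic one: absorption would require $\norm{\mathring{A}}_\infty$ small compared to $H$, which is nothing the hypotheses provide. The underlying point is that Thomas--Fialkov rigidity is \emph{global}: $\tRic=0$ at a point does not force umbilicity there; what forces it is the full equation $\Ric=(n-1)g$ with the specific constant $n-1$, and pinning that constant is precisely what the paper spends most of its effort on. Its route is: (i) an almost-Schur lemma $\norm{R-\Rm}_{L^p}\le C\norm{\tRic}_{L^p}$, obtained by treating the Bianchi identity $\nabla R = \frac{2n}{n-2}\divv\tRic$ as a PDE in harmonic coordinates; (ii) a Cheeger--Gromov compactness/contradiction argument which, together with the volume normalization, pins $\Rm$ near $n(n-1)$ and gives qualitative $C^{1,\alpha}$-closeness (hence smallness of $\nabla f$); (iii) a pointwise eigenvalue comparison showing $\abs{A-g}\,\abs{A+g}\le c\,\abs{(A-g)\nomizu(A+g)}\le c'\abs{\Ric-(n-1)g}$, after which convexity in the form $A+g\ge g$ yields $\norm{A-g}_{L^p}\lesssim \norm{\tRic}_{L^p}+\sqrt{\epsilon}\,\norm{f}_{W^{2,p}}$. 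Steps (i) and (ii) have no counterpart in your outline, and without them the constant $n-1$ --- and with it the pointwise rigidity --- is simply not available.

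Your second ingredient (the $L^p$ almost-umbilical $W^{2,p}$-stability, with the translation $c$ chosen to kill the first spherical harmonics) is essentially the content of \cite{Gioffre2016} and is indeed what the paper reuses at the end: linearize $A$ over the radial graph to get $\norm{\Delta_\sigma f + nf}_{L^p}$ small, apply the Obata-type Lemma \ref{ObataLike}, and select $c$ by a topological degree argument. That part of the plan is sound in outline (modulo the fact that the paper ends up controlling $A-g$ rather than $\mathring{A}$, which is what the radius-one normalization requires). The genuinely missing idea is the global pinning of the scalar curvature.
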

From theorem \ref{MainThm} we infer the following corollary.
\begin{cor}\label{MainCor}
Under the assumptions of theorem \ref{MainThm} there exist $\delta$ depending on $n$, $p$, $\norm{A}_\infty$, $\diam \Sigma$ with the following property. If $\norm{\tRic}_{L^p_g(\Sigma)} \le \delta$, then there exist a smooth diffeomorphism $\psi \daA{\esse^n}{\Sigma}$ and a constant $ { C= C(n, \, p, \,  \norm{A}_\infty, \, \diam \Sigma) }$ such that
\begin{equation}
\norm{\psi^*g - \sigma}_{W^{1, \, p}_\sigma(\esse^n)} \le C \norm{\tRic}_{L^p_g(\Sigma)}
\end{equation}
\end{cor}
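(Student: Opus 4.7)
The plan is to derive the estimate directly from Theorem \ref{MainThm} by a first-order expansion of the pullback metric. Let $\psi$ and $c$ be as in Theorem \ref{MainThm}, and set $F := \psi - \id - c$, so that
\begin{equation*}
\norm{F}_{W^{2, \, p}_\sigma(\esse^n)} \le C \norm{\tRic}_{L^p_g(\Sigma)}.
\end{equation*}
Viewing $\psi, \, \id \daA{\esse^n}{\erre^{n+1}}$ as vector-valued maps and using that $c$ is constant, we have $d\psi = d\id + dF$, whence for any pair of tangent vectors the Euclidean inner product yields
\begin{equation*}
\psi^*g - \sigma \;=\; d\id \odot dF + dF \odot d\id + dF \odot dF,
\end{equation*}
where $\odot$ denotes symmetrization of the componentwise dot product. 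The aim is to bound each of the three summands in $W^{1, \, p}_\sigma(\esse^n)$.

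The two linear terms have smooth, uniformly bounded coefficients (coming from $\id$ and its covariant derivatives on $\esse^n$), so they are controlled directly by
\begin{equation*}
\norm{d\id \odot dF + dF \odot d\id}_{W^{1, \, p}_\sigma(\esse^n)} \le C \norm{F}_{W^{2, \, p}_\sigma(\esse^n)}.
\end{equation*}
The quadratic term $dF \odot dF$ will be handled by the product rule
\begin{equation*}
\nabla(dF \odot dF) = 2\, \nabla^2 F \odot dF.
\end{equation*}
Together with an $L^\infty$-bound on $dF$ this gives
\begin{equation*}
\norm{dF \odot dF}_{W^{1, \, p}_\sigma(\esse^n)} \le C \norm{dF}_{L^\infty_\sigma(\esse^n)} \norm{F}_{W^{2, \, p}_\sigma(\esse^n)}.
\end{equation*}

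The key point, and the only real obstacle, is to establish an a priori bound $\norm{dF}_{L^\infty_\sigma} \le C(n, \, \norm{A}_\infty, \, \diam \Sigma)$ which does \emph{not} depend on the smallness parameter $\delta$. This should follow from the construction of $\psi$ in Theorem \ref{MainThm}: since $\psi$ is a smooth diffeomorphism between $\esse^n$ and a closed, convex hypersurface whose second fundamental form and diameter are a priori controlled, its differential admits a uniform $C^0$ bound depending only on $n$, $\norm{A}_\infty$ and $\diam \Sigma$. Combined with $|d\id| = 1$, this yields $\norm{dF}_{L^\infty_\sigma} \le C$. Substituting the three bounds back gives
\begin{equation*}
\norm{\psi^*g - \sigma}_{W^{1, \, p}_\sigma(\esse^n)} \le C \norm{F}_{W^{2, \, p}_\sigma(\esse^n)} \le C \norm{\tRic}_{L^p_g(\Sigma)},
\end{equation*}
which is the claimed inequality. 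Should the a priori $L^\infty$-bound on $d\psi$ not be directly available from the construction, one can recover it through Sobolev embedding when $2p > n$ and, in the remaining range, through a Gagliardo--Nirenberg interpolation using the uniform bound $\norm{\psi - c}_{L^\infty} \le \diam \Sigma$ and $\norm{F}_{W^{2, \, p}_\sigma} \le C\delta$.
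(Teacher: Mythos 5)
Your argument is correct and is essentially the intended deduction: the paper gives no separate proof of Corollary \ref{MainCor}, and the natural route is exactly the first-order expansion of $\psi^*g-\sigma$ in $dF$ that you carry out, with the linear terms controlled by $\norm{F}_{W^{2,\,p}_\sigma}$ and the quadratic term by the product rule. The $L^\infty$ bound on $dF$ that you single out as the only real obstacle is indeed supplied by the construction: in the proof of Theorem \ref{MainThm} the map is $\psi(x)=e^{f(x)}x+c$ with the logarithmic radius $f$ satisfying $\norm{f}_\infty\le C\epsilon$ and $\norm{\nabla f}_\infty\le C\sqrt{\epsilon}$ by \eqref{ApproxFirst}--\eqref{ApproxSecond}, whence $\norm{dF}_{L^\infty_\sigma}\le C$ (in fact $\le C\sqrt{\epsilon}$); equivalently one can read the whole corollary off formula \eqref{g}, $g_{ij}-\sigma_{ij}=(e^{2f}-1)\sigma_{ij}+e^{2f}\nabla_i f\,\nabla_j f$, together with $\norm{f}_{W^{2,\,p}_\sigma}\le C\norm{\tRic}_{L^p_g}$. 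One correction: drop the Gagliardo--Nirenberg fallback at the end, since interpolating $\norm{dF}_{L^\infty}$ between $\norm{F}_{L^\infty}$ and $\norm{\nabla^2F}_{L^p}$ is only admissible when $2p>n$ and so does not cover the full range $p\in(1,\infty)$ --- but it is not needed, as the a priori bound is already available.
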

Corollary \ref{MainCor} is particularly interesting. Indeed, in the theory of convergence for Riemannian manifolds (\textit{cf.} \cite{PetersenCollection}, \cite[Ch.$10$]{Petersen}, \cite{CheegerEbin}) there are many results about the $W^{k, \, p}$-nearness of a metric $g$ to a constant curvature one. However, these results have all qualitative nature. Corollary \ref{MainCor} provides a quantitative estimate instead, and to our knowledge it is the first result of this type. 
The proof of theorem \ref{MainThm} is quite long and contains different propositions which have interest in itself, so we have split the proof into four main proposition. The first proposition is a generalization of an almost Schur lemma proved by De Lellis and Topping in \cite{DLT}, and show how in a manifold we control the scalar curvature minus its average with its traceless Ricci tensor. In the second proposition we introduce a new technique, whose aim is to reduce particular tensorial problems to polynomial computations. We use it and find stable estimates for the Weyl tensor of a hypersurface. The actual proof of the main theorem actually starts in the third proposition. Here we apply a well-known theorem proved by Cheeger in \cite{Cheeger} and find a first, qualitative $C^{1, \, \alpha}$-stability result for $\Sigma$ to be near to a sphere. We make it quantitative in the fourth and last proposition, where we find a quantitative but non optimal $W^{2, \, p}$-estimate for $\Sigma$ to be near to a sphere. Using these results, we use a technique successfully applied in \cite{Gioffre2016} and optimise the fourth proposition's estimate.
\subsection*{Notations and preliminary theorems}
Throughout this paper we will use the previous notational conventions plus the following ones:
\begin{center}
\begin{tabular}{ll}
$n$ & Integer $ \ge 3$. \\
$p$ & real number in $(1, \, +\infty)$. \\ 
$\delta$  & standard metric in $\erre^n$. \\
$D$ & usual derivative in $\erre^n$. \\
$\Sigma$ & closed, convex $n$-dimensional hypersurface in $\erre^{n+1}$. \\
$\Riem^g$ & Riemann tensor associated to the metric $g$. \\ 
$\Sec^g$ & Sectional curvature associated to $\Riem^g$ . \\
$\Ric^g$ & Ricci tensor associated to the metric $g$. \\ 
$R^g$ & scalar curvature associated to the metric $g$. \\
$\nablag$ & Levi-Civita connection associated to a metric $g$.   \\
$\hessg$ & Hessian of a function (or a tensor) associated to the metric $g$. \\
$\Delta_g$ & Laplace-Beltrami operator associated to the metric $g$. \\
$\Gamma(E)$ & space of smooth sections of a vector bundle $E \rightarrow M$. \\
$O_{\epsilon^{\gamma}}(\norm{T}_{k, \, p})$ & scalar or tensorial quantity which satisfies the estimate \\
 &  $\norm{O_{\epsilon^{\gamma}}(\norm{T}_{k, \, p})}_{k, \, p} \le C \epsilon^{\gamma} \norm{T}_{k, \, p}$ for some constant $C$.
\end{tabular}
\end{center}
Whenever possible, we will omit the subscripts. Below we give a list of known facts and theorems we are going to use in the paper.
\subsubsection*{Geometric quantities}
We fix the sign convention for the main geometric quantities we are going to study in the paper. 
We define 
\[
\R(X, \, Y)Z := \nabla^2_{Y, \, X} Z - \nabla^2_{X, \, Y} Z
\]
The Riemann curvature is the $4$-covariant tensor given by lowering one index in the previous expression.
\begin{equation}\label{RiemannSign}
\Riem(X, \, Y, \, Z, \, W) = \coup*{\R(X, \, Y)Z, \, W}
\end{equation}
The Ricci curvature is the $2$-covariant tensor given by taking the $(1, \, 3)$-trace of the Riemann curvature. Namely,
\begin{equation}\label{RicciSign}
\Ric_{ij} := g^{pq} \Riem_{ipjq}
\end{equation}
Finally, the scalar curvature is given by taking the trace of the Ricci curvature.
\begin{equation}\label{CurvatureSign}
R= g^{ij} \Ric_{ij}
\end{equation}
We recall the following well known corollary of the differential Bianchi identity (see \cite[p. $184$]{GHL}), which relates the derivatives of the Ricci curvature with the derivatives of the scalar curvature.
\begin{lemma}\label{DiffBian}
Let $(M, \, g)$ be a Riemannian manifold. The following equation holds:
\begin{equation}\label{DiffBianEq}
\divv \Ric = \frac{1}{2} \nabla R
\end{equation}
\end{lemma}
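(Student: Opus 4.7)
The plan is to derive this contracted Bianchi identity by starting from the second (differential) Bianchi identity for the Riemann tensor and tracing it twice against the metric. With the sign conventions fixed in \eqref{RiemannSign}--\eqref{CurvatureSign}, the second Bianchi identity reads
\[
\nabla_\lambda \Riem_{\mu\nu\rho\sigma} + \nabla_\mu \Riem_{\nu\lambda\rho\sigma} + \nabla_\nu \Riem_{\lambda\mu\rho\sigma} = 0,
\]
and all subsequent manipulations are algebraic, using only the standard Riemann symmetries $\Riem_{abcd} = -\Riem_{bacd} = -\Riem_{abdc} = \Riem_{cdab}$ together with the compatibility $\nabla g = 0$, so that the metric can be freely moved inside or outside covariant derivatives.

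First I would contract the three-term Bianchi identity with $g^{\nu\sigma}$. The first term becomes $\nabla_\lambda \Ric_{\mu\rho}$ directly from the definition \eqref{RicciSign}. The second term, after using the antisymmetry $\Riem_{\nu\lambda\rho\sigma} = -\Riem_{\lambda\nu\rho\sigma}$ in the first pair, becomes $-\nabla_\mu \Ric_{\lambda\rho}$. The third term becomes $\nabla^\sigma \Riem_{\lambda\mu\rho\sigma}$, the divergence of Riemann on the last slot. This produces the once-contracted Bianchi identity
\[
\nabla_\lambda \Ric_{\mu\rho} - \nabla_\mu \Ric_{\lambda\rho} + \nabla^\sigma \Riem_{\lambda\mu\rho\sigma} = 0.
\]

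Next I would contract again, this time with $g^{\lambda\rho}$. The first term becomes $(\divv \Ric)_\mu$, the second term becomes $\nabla_\mu R$ by \eqref{CurvatureSign}. For the third term I would use the pair-swap symmetry $\Riem_{\lambda\mu\rho\sigma} = \Riem_{\mu\lambda\sigma\rho}$ (combining $\Riem_{abcd}=\Riem_{cdab}$ with the antisymmetries) to recognize that $g^{\lambda\rho}\Riem_{\lambda\mu\rho\sigma} = \Ric_{\mu\sigma}$, so the third term also contributes $(\divv \Ric)_\mu$. Collecting gives $2(\divv \Ric)_\mu = \nabla_\mu R$, which is \eqref{DiffBianEq}.

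No real obstacle is expected here: the only subtlety is bookkeeping the contracted slots so that each double trace of Riemann is correctly identified with Ricci under the chosen sign convention \eqref{RicciSign}, and the signs from the antisymmetry of the first pair are not dropped.
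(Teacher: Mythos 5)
Your double contraction of the second Bianchi identity is correct (the once-contracted identity, the sign from the first-pair antisymmetry, and the identification $g^{\lambda\rho}\Riem_{\lambda\mu\rho\sigma}=\Ric_{\mu\sigma}$ all check out under the paper's conventions \eqref{RiemannSign}--\eqref{CurvatureSign}, and the result is in any case invariant under an overall sign flip of $\Riem$). The paper gives no proof of Lemma \ref{DiffBian} and simply cites \cite[p.~184]{GHL}; your argument is exactly the standard one that reference contains, so there is nothing further to compare.
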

Another important quantity we need to recall is the second fundamental form. Let $\Sigma$ be an oriented, hypersurface in $\erre^{n+1}$, and let $\nu$ be its outer normal. We define $A$ as
\begin{equation}\label{2ndFF}
A(X, \, Y) := \coup*{X, \, D_Y \nu} \mbox{ for any } X, \, Y \in \Gamma(\Sigma)
\end{equation}
It is straightforward to see that $A$ is a well-defined, symmetric, $2$-covariant tensor. The second fundamental has a crucial role in Differential Geometry. We recall here two theorems we will use in the paper. Firstly, we recall the Gauss equations for hypersurfaces in an Eucliden space. (see \cite[p. $248$]{GHL}
\begin{teo}\label{GaussThm}
Let $\Sigma$ be a hypersurface in $\erre^{n+1}$, and let $g$ be its induced metric. Then, the following equation holds:
\begin{equation}\label{GaussEq}
\Riem_{ijkl} = \frac{1}{2} A \nomizu A_{ijkl} = A_{ik} \, A_{jl} - A_{il} \, A_{jk}
\end{equation}
\end{teo}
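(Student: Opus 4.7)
This is the classical Gauss equation. The plan is to relate the ambient flat connection $D$ on $\erre^{n+1}$ to the induced Levi--Civita connection $\nabla$ on $\Sigma$, and then read off the curvature from the vanishing of the ambient Riemann tensor.

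First I would establish the Gauss formula: for tangent vector fields $X, Y$ on $\Sigma$, there is a symmetric bilinear form $h$ with $D_X Y = \nabla_X Y + h(X,Y)\,\nu$. Differentiating the orthogonality $\langle Y, \nu\rangle \equiv 0$ identifies $h(X,Y) = \langle D_X Y, \nu\rangle = -\langle Y, D_X \nu\rangle$, which by the definition \eqref{2ndFF} and the symmetry of $A$ equals $-A(X,Y)$. I would also record that $D_X \nu$ is tangent to $\Sigma$ (since $|\nu|^2 \equiv 1$), with components $(D_X\nu)^\ell = A^{\ell}_{\ k}\,X^k$ obtained from the same definition by raising one index.

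Next, for coordinate vector fields $\partial_i,\partial_j,\partial_k$ on $\Sigma$, I would expand $D_{\partial_i}\bigl(D_{\partial_j}\partial_k\bigr)$ by applying the Gauss formula twice, carefully differentiating both the tangential piece $\nabla_{\partial_j}\partial_k$ and the normal piece $-A_{jk}\,\nu$. Subtracting the analogous expression with $i,j$ swapped, using $[\partial_i,\partial_j] = 0$ and the flatness of $\erre^{n+1}$, one obtains the identity $D_{\partial_i}D_{\partial_j}\partial_k - D_{\partial_j}D_{\partial_i}\partial_k = 0$. Its normal component recovers the Codazzi identity, which is not needed here, while its tangential component reads
\[
\nabla_{\partial_j}\nabla_{\partial_i}\partial_k - \nabla_{\partial_i}\nabla_{\partial_j}\partial_k = A_{ik}\,A^{\ell}_{\ j}\,\partial_\ell - A_{jk}\,A^{\ell}_{\ i}\,\partial_\ell.
\]
With the sign convention \eqref{RiemannSign} the left-hand side is $\R(\partial_i,\partial_j)\partial_k$, and taking the inner product with $\partial_l$ gives $\Riem_{ijkl} = A_{ik}A_{jl} - A_{il}A_{jk}$, which is the second equality in \eqref{GaussEq}. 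The first equality is merely the definition of the Kulkarni--Nomizu product $\nomizu$ applied to $A$ with itself.

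The argument is essentially bookkeeping and presents no serious obstacle; the only point requiring care is keeping the sign of $h$ and the orientation of $\R$ consistent with the paper's conventions, in particular the non-standard ordering in \eqref{RiemannSign} and the fact that $A$ is defined through the outer normal in \eqref{2ndFF}.
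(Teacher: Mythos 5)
Your argument is correct, and the signs do come out right under the paper's conventions: with $A(X,Y)=\coup{X,\,D_Y\nu}$ for the outer normal one gets $D_XY=\nabla_XY-A(X,Y)\,\nu$, and the tangential part of the flatness identity then yields exactly $\R(\partial_i,\partial_j)\partial_k$ as defined in \eqref{RiemannSign}, hence $\Riem_{ijkl}=A_{ik}A_{jl}-A_{il}A_{jk}$ (one can sanity-check on $\esse^n$, where $A=g$ gives $\Ric=(n-1)g$, consistent with the rest of the paper). Note that the paper does not prove this statement at all; it merely recalls it with a citation to the literature, and your derivation is precisely the standard one given there.
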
 
Secondly, we recall the following $n$-dimensional version of the famous \textit{Nabelpunksatz}. (See \cite[p. $8$]{Spivak})
\begin{teo}\label{Nabel}
Let $\Sigma$ be a closed hypersurface in $\erre^{n+1}$, with induced metric $g$. Assume that $A=g$. Then $\Sigma$ is isometric to the round sphere $\esse^n$.
\end{teo}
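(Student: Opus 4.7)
\emph{Proof plan.} The idea is to integrate the umbilical condition $A = g$ to a constant of motion that pins down the center of a sphere.

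First, I would unpack the hypothesis using the definition (\ref{2ndFF}) of the second fundamental form: $A = g$ means that, for every pair of tangent vectors $X, \, Y$ on $\Sigma$,
\[
\coup*{X, \, D_Y \nu} = g(X, \, Y) = \coup*{X, \, Y}_{\erre^{n+1}}.
\]
Since $|\nu| \equiv 1$, differentiating gives $\coup*{\nu, \, D_Y \nu} = 0$, so $D_Y \nu$ is automatically tangent to $\Sigma$. Hence the identity above forces the Weingarten-type equation $D_Y \nu = Y$ as vectors in $\erre^{n+1}$ for every $Y \in \Gamma(T\Sigma)$.

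Next, I would introduce the smooth map $F \daA{\Sigma}{\erre^{n+1}}$ defined by $F(x) := x - \nu(x)$, and compute its differential: $dF_x(Y) = Y - D_Y \nu = 0$. Thus $F$ is locally constant. Assuming $\Sigma$ is connected (otherwise argue component by component, noting that a closed hypersurface has finitely many components), there is a vector $c \in \erre^{n+1}$ such that $x - \nu(x) = c$ for all $x \in \Sigma$. This rearranges to $\nu(x) = x - c$, and taking Euclidean norms gives $|x - c| = 1$. Therefore $\Sigma$ is contained in the unit sphere centered at $c$.

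To conclude, I would note that $\Sigma$ is an $n$-dimensional embedded closed submanifold of the $n$-dimensional sphere $\{\, y \in \erre^{n+1} : |y - c| = 1 \,\}$; hence $\Sigma$ is both open (by the immersion theorem, dimensions coinciding) and closed in that sphere, and so it fills it entirely by connectedness. The Euclidean isometry $y \mapsto y - c$ then carries $\Sigma$ onto $\esse^n$ and preserves the induced metric, proving the claim.

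The single nontrivial step is the Weingarten identity $D_Y \nu = Y$; once that is extracted from $A = g$, the argument reduces to integrating the trivial first-order ODE $dF \equiv 0$, so I do not anticipate any real obstacle.
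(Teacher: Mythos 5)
Your proof is correct. The paper does not prove Theorem \ref{Nabel} itself but only cites \cite[p.~$8$]{Spivak}, and your argument is precisely the standard one found there: the Weingarten identity $D_Y\nu = Y$, integration of $x \mapsto x - \nu(x)$ to a constant $c$, and the open-closed argument inside the sphere $\set{\abs{y-c}=1}$ (in fact your situation is simpler than the general umbilic case, since the hypothesis $A=g$ already fixes the principal curvature to be the constant $1$, so no Codazzi argument is needed). The only caveat, which you already flag, is connectedness of $\Sigma$; this is implicit in the paper's standing convexity assumption.
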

We need to recall the following proposition, (see \cite[p. $48$]{Daniel}) which shows how convexity affect the Ricci tensor and the second fundamental form.
\begin{prop}\label{Convexity}
Let $n \ge 2$, and be given, and let $\Sigma$ be a closed, connected $n$-dimensional hypersurface in $\erre^{n+1}$. Then the following facts are equivalent: 

\begin{tabular}{ll}
$(i)$ &  $\Ric \ge 0$  everywhere on $\Sigma$. \\
$(ii)$ & $A \ge 0$  everywhere on $\Sigma$.
\end{tabular}

In particular, if either of the one conditions above holds, $\Sigma$ is convex.
\end{prop}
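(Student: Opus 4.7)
The plan is to combine a pointwise algebraic analysis with a global connectedness argument, and then to deduce convexity from the non-negativity of $A$. The implication $(ii) \Rightarrow (i)$ is immediate from the Gauss equation \eqref{GaussEq}: at any point, choosing an orthonormal frame that diagonalises $A$ with eigenvalues $\kappa_1, \dots, \kappa_n$, the $(1,3)$-trace of \eqref{GaussEq} shows that $\Ric$ is diagonal in the same frame, with eigenvalues $\kappa_i(H - \kappa_i) = \kappa_i \sum_{j \neq i} \kappa_j$, where $H = \tr A$. These are non-negative whenever all the $\kappa_i$ are, which gives $(i)$.

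For the converse $(i) \Rightarrow (ii)$ the same eigenvalue formula yields a pointwise trichotomy: if at a point one had $\kappa_a < 0 < \kappa_b$ for two indices, then $\Ric \ge 0$ would force simultaneously $H \le \kappa_a < 0$ and $H \ge \kappa_b > 0$, a contradiction. Hence at every $p \in \Sigma$ the principal curvatures are either all $\ge 0$ or all $\le 0$. To single out the first alternative globally I would use the point $p_0$ maximising the function $f(p) = |p|^2$ on $\Sigma$: a direct computation of its induced Hessian gives $\nabla^2 f = 2 g - 2 (p, \nu) A$, and the left-hand side is non-positive at $p_0$. After placing the origin inside the region enclosed by $\Sigma$, so that $(p_0, \nu) > 0$, this forces $A(p_0) > 0$ strictly. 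Combining the pointwise trichotomy with the continuity of the principal curvatures and the connectedness of $\Sigma$, the closed decomposition $\Sigma = \{A \ge 0\} \cup \{A \le 0\}$ must then collapse to its first component.

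Once $A \ge 0$ has been established everywhere, the convexity of $\Sigma$ follows from the classical Hadamard--Sacksteder theorem for closed hypersurfaces in Euclidean space with non-negative second fundamental form. I expect the main technical obstacle to be the last connectedness step: ruling out a mixed scenario in which $\{A \le 0\}$ is non-empty and meets $\{A > 0\}$ only across the totally geodesic locus $\{A = 0\}$ requires careful use of the intermediate value theorem applied to the continuous function $H$, together with the openness of $\{A > 0\}$ and with the hypersurface structure of $\Sigma$ --- not merely its intrinsic Riemannian one.
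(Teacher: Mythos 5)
The paper does not actually prove this proposition: it is quoted from \cite[p.~48]{Daniel}, so your attempt has to stand on its own. The pointwise part of your argument is correct and is the standard opening move. Diagonalising $A$ and tracing the Gauss equation gives $\Ric = H A - A^2$, hence eigenvalues $\kappa_i(H-\kappa_i)=\kappa_i\sum_{j\neq i}\kappa_j$; this yields $(ii)\Rightarrow(i)$ at once, and under $(i)$ the assumption $\kappa_a<0<\kappa_b$ forces $H\le\kappa_a<0$ and $H\ge\kappa_b>0$ simultaneously, so at each point the principal curvatures have a common weak sign. The farthest-point computation $\nabla^2|p|^2=2g-2(p,\nu)A\le 0$ is also correct and produces a point where $A>0$ (in fact $(p_0,\nu)=|p_0|>0$ automatically, since the outer normal at the farthest point must be $p_0/|p_0|$; no translation of the origin is needed).

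The gap is exactly where you suspect it, and it is not a technicality that the intermediate value theorem will repair. The sets $\{A\ge 0\}$ and $\{A\le 0\}$ are both closed and may meet along $\{A=0\}=\{H=0\}$; connectedness of $\Sigma$ places no obstruction to a continuous function such as $H$ being positive somewhere, negative somewhere, and vanishing on a separating set, and openness of $\{A>0\}$ together with the IVT only tells you that a path into a hypothetical negative region must cross $\{A=0\}$ --- which is not a contradiction. Excluding this mixed scenario is the genuinely hard content of the proposition (already for $n=2$ it is the nontrivial theorem that a closed surface with $K\ge 0$ bounds a convex body), and any honest proof must bring in either a Codazzi-type unique-continuation analysis along $\{A=0\}$ or the Sacksteder--do Carmo--Lima theorem. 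The efficient repair is to reorder your own ingredients: the pointwise trichotomy already gives $\Sec\ge 0$ everywhere, since all sectional curvatures are products $\kappa_i\kappa_j$ of same-signed numbers, and this holds regardless of which sign alternative occurs at each point. The convexity theorem you reserve for the final line then applies directly to a compact hypersurface of nonnegative sectional curvature and yields that $\Sigma$ bounds a convex body, whence $A\ge 0$ with respect to the outer normal; no separate global sign-selection argument is needed. As written, your proof defers the entire global difficulty to an unexecuted step while holding the tool that resolves it in reserve for the easy conclusion.
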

\subsubsection*{Kulkarni-Nomizu product}  Given two $2$-covariant symmetric tensors $A$ and $B$, we denote by $A \nomizu B$ the $4$-covariant symmetric tensor defined by
\begin{equation}\label{Nomizu}
A \nomizu B_{ijkl} := A_{ik} \, B_{jl} + A_{jl} \, B_{ik} - A_{il} \, B_{jk} - A_{jk} \, B_{il} \quad 
\end{equation}
$A \nomizu B$ is called the \textit{Kulkarni-Nomizu product of $A$ and $B$}. The importance of such quantity is due to the following Riemann decomposition theorem (see \cite[p. $182$-$183$]{GHL}).
\begin{teo}\label{RiemDecomp}
Let $n \ge 4$ be given, and let $(M, \, g)$ be a $n$-dimensional manifold. Then its Riemann tensor can be decomposed as follows:
\begin{equation}\label{RiemDecompEq}
\Riem = \frac{R}{2n(n-1)} g \nomizu g + \frac{1}{n-2} \tRic \nomizu g + W
\end{equation}
where $W$ denotes the $4$-covariant Weyl tensor. This decomposition is orthogonal, namely 
\begin{equation}\label{RiemDecompNorm}
\abs{\Riem}^2 = \frac{R^2}{4n^2(n-1)^2} \abs{g \nomizu g}^2 + \frac{1}{(n-2)^2} \abs{\tRic \nomizu g}^2 + \abs{W}^2
\end{equation}
If $n=3$, then the Weyl tensor vanishes.
\end{teo}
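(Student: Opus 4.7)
My plan is to characterise the Weyl tensor $W$ as the totally trace-free part of $\Riem$, and to determine the Kulkarni-Nomizu coefficients by demanding that what remains of $\Riem$ after subtraction be trace-free. I introduce the ansatz
\begin{equation*}
W := \Riem - \alpha \, R \, g \nomizu g - \beta \, \tRic \nomizu g,
\end{equation*}
with scalars $\alpha,\beta$ to be chosen. Each summand already shares the algebraic symmetries of an algebraic curvature tensor, so $W$ does too, and it suffices to pin down $\alpha,\beta$ so that $W$ is trace-free in the $(2,4)$-slot used in \eqref{RicciSign}. A direct computation from \eqref{Nomizu} gives
\begin{equation*}
g^{jl}(g \nomizu g)_{ijkl} = 2(n-1)\, g_{ik}, \qquad g^{jl}(\tRic \nomizu g)_{ijkl} = (n-2)\, \tRic_{ik},
\end{equation*}
where the second identity uses $\tr_{g} \tRic = 0$. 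Tracing the defining equation for $W$ and invoking $\Ric = \tRic + (R/n)\, g$ then forces $\alpha = 1/(2n(n-1))$ and $\beta = 1/(n-2)$, which is exactly \eqref{RiemDecompEq}.

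For the orthogonal decomposition \eqref{RiemDecompNorm}, I would analyse the three cross terms $\langle g \nomizu g, \tRic \nomizu g\rangle$, $\langle g \nomizu g, W\rangle$ and $\langle \tRic \nomizu g, W\rangle$. Each of them has the structure $\langle h \nomizu g, T\rangle$ for a symmetric $2$-tensor $h$ and a curvature-type tensor $T$; expanding the pairing via \eqref{Nomizu} and using the symmetries of $T$ collapses it, up to a universal combinatorial factor, into $\langle h, \tr T\rangle$, the Ricci contraction of $T$ paired against $h$. The first cross term thereby reduces to $\tr_{g} \tRic = 0$, while the other two vanish by construction, since $W$ is trace-free. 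Summing the squared norms of the three pieces then yields \eqref{RiemDecompNorm}. For the $n=3$ assertion I would appeal to a dimension count: the space of algebraic curvature tensors on a three-dimensional inner product space has dimension $n^2(n^2-1)/12 = 6$, which coincides with the dimension of the space of symmetric $2$-tensors. The Ricci contraction is surjective because the first two terms of \eqref{RiemDecompEq} provide an explicit Kulkarni-Nomizu right inverse, hence it is bijective, so its kernel, in which $W$ lives, must be trivial.

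The main bookkeeping obstacle is securing the two Kulkarni-Nomizu trace formulae displayed above: conventions for \eqref{Nomizu} differ across references, and the coefficients $1/(2n(n-1))$ and $1/(n-2)$ are sensitive to sign and normalisation choices. Once these traces are verified, the remainder of the argument is purely algebraic and requires no further geometric input.
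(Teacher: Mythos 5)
Your argument is correct and is essentially the standard proof of the Ricci decomposition (the paper itself gives no proof, citing \cite[p.~182--183]{GHL}, where the same trace computation appears): the contractions $g^{jl}(g\nomizu g)_{ijkl}=2(n-1)g_{ik}$ and $g^{jl}(h\nomizu g)_{ijkl}=(n-2)h_{ik}+(\tr h)g_{ik}$ are right for the convention \eqref{Nomizu}, they pin down $\alpha,\beta$ as you say, and the identity $\coup{h\nomizu g,\,T}=4\coup{h,\,c(T)}$ (with $c$ the Ricci contraction) reduces all three cross terms to traces that vanish. Your dimension count for $n=3$, with surjectivity of $c$ supplied by the explicit Kulkarni--Nomizu right inverse, is likewise the standard way to see $W=0$, so nothing is missing.
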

We immediately notice that due to the Schur lemma, if both $W$ and the $\tRic$ vanish, then $(M, \, g)$ has constant sectional curvature.
\subsubsection*{Cheeger-Gromov convergence}  Given three positive numbers $\Lambda$, $V$ and $D$, we define the Cheeger-Gromov class.
\begin{equation}\label{CheegerGromov}
\Ch_{\Lambda, \, V, \, D} :=   \Set{ (M, \, g)  \mid \norm{\Riem}_\infty \le \Lambda,\ V \le \Vol(M), \, \diam M \le D}
\end{equation}
The Cheeger-Gromov class plays an important role in the theory of convergence for Riemannian manifolds due to its compactness property. (see \cite{CheegerEbin} or \cite{Anderson})
\begin{teo}\label{CheegerCptness}
Let $\set{(M_k, \, g_k)}_{k \in \enne} \subset \Ch_{\Lambda, \, V, \, D}$ be given. For every $ {\alpha \in (0, \, 1)}$ there exist $(M, \, g) \in \Ch_{\Lambda, \, V, \, D}$ and a subsequence $\set{(M_{k_h}, \, g_{k_h})}_{k_h \in \enne}$ that converges in $C^{1, \, \alpha}$ to $(M, \, g)$. For convergence we mean that for every $k_h$ there exists a diffeomorphism 
\[
\phi_{k_h} \daA{M_{k_h}}{M}
\]
such that the pull-back metrics $\phi_{k_h}^* g_{k_h}$ converge to $g$ in $C^{1, \, \alpha}$.
\end{teo}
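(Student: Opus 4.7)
The plan is to reduce the global convergence statement to a standard analytic/geometric package: first secure a uniform injectivity radius bound, then produce harmonic coordinates of uniform size with uniform elliptic estimates on the metric, and finally extract a convergent subsequence by Arzelà-Ascoli and glue the local limits.

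Step one is Cheeger's lemma. From $\norm{\Riem}_\infty \le \Lambda$ and $\Vol(M_k) \ge V$, together with the diameter bound $\diam M_k \le D$, one obtains a uniform positive lower bound $i_0 = i_0(\Lambda, V, D) > 0$ on the injectivity radius of every $(M_k, g_k)$. The key input is Bishop--Gromov volume comparison (controlling the volume of small balls from the sectional curvature bound) together with the short-geodesic-loop argument that turns a small injectivity radius into a volume collapse.

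Step two is Anderson's harmonic coordinate construction. Using the bounds $\norm{\Riem}_\infty \le \Lambda$ and $\injrad \ge i_0$, for each $p \in M_k$ one builds harmonic coordinates on a geodesic ball $B(p, r_0)$ of uniform radius $r_0 = r_0(\Lambda, i_0) > 0$. In such coordinates the condition $\Delta_{g_k} x^i = 0$ gives an elliptic equation for the metric coefficients $(g_k)_{ij}$ with coefficients controlled by $\Riem$, and Schauder estimates then yield uniform $C^{1, \alpha}$ bounds on $(g_k)_{ij}$ independent of $k$. Because $\diam M_k \le D$ and balls of radius $r_0$ have volume bounded below, a packing argument produces a finite uniform atlas of such harmonic charts, with a uniform bound on the number of charts.

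Step three is extraction of the limit. Passing to a subsequence and using a diagonal argument over the finitely many charts, the coefficients $(g_{k_h})_{ij}$ converge in $C^{1, \alpha'}$ for every $\alpha' < \alpha$, and the transition maps converge in $C^{2, \alpha'}$. This allows one to build a limit manifold $M$ and diffeomorphisms $\phi_{k_h} \daA{M_{k_h}}{M}$ so that $\phi_{k_h}^* g_{k_h}$ converges in $C^{1, \alpha'}$ to a metric $g$ on $M$. Lower semicontinuity of $\norm{\Riem}_\infty$, $\Vol$, and $\diam$ under $C^{1, \alpha}$-convergence places $(M, g)$ back in $\Ch_{\Lambda, V, D}$.

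The genuinely hard step is the first one: Cheeger's injectivity radius estimate, which is a delicate piece of comparison geometry. Steps two and three are then a package of elliptic regularity in harmonic coordinates plus Arzelà-Ascoli, but the technical care is in choosing the harmonic atlas uniformly and in carrying out the diagonal/gluing argument so that the limit is a genuine smooth manifold with a $C^{1, \alpha}$ Riemannian metric rather than a weaker metric-space object.
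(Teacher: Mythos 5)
The paper does not prove this theorem at all: it is quoted as a classical result with a pointer to \cite{CheegerEbin} and \cite{Anderson}, and Lemma \ref{InjControl} and Theorem \ref{Harmonic} are themselves stated separately as imported ingredients. Your outline (Cheeger's injectivity radius bound, uniform harmonic charts with elliptic estimates, finite atlas plus diagonal extraction and gluing) is exactly the standard proof found in those references, so it is consistent with the paper's source; the only point to watch is that the limit metric is merely $C^{1,\alpha}$, so the curvature bound for $(M,g)$ holds only in a weak sense, which is also how the cited results state it.
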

\subsubsection*{Injectivity radius and harmonic coordinates} The importance of the Cheeger-Gromov class is not only restricted to its compactness property. Riemannian manifolds in the class have a lower bound on their injectivity radius $\injrad_g(M)$. Indeed, in \cite{CheegerEbin} Cheeger proved the following lemma.
\begin{lemma}\label{InjControl}
There exists $i_0=i_0(n, \, \Lambda, \, V, \, D)>0$ such that
\begin{equation}
\injrad_g(M) \ge i_0 \mbox{ for every } (M, \, g) \in \Ch_{\Lambda, \, V, \, D}
\end{equation}
\end{lemma}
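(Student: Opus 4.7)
The plan is to argue by contradiction using Klingenberg's alternative, Rauch comparison, and a rescaling/compactness argument of Cheeger--Gromov type.

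Suppose the conclusion fails: there is a sequence $(M_k, g_k) \in \Ch_{\Lambda, V, D}$ and points $p_k \in M_k$ with $i_k := \injrad_{g_k}(p_k) \to 0$, where we may arrange $i_k$ to (nearly) realize the infimum of the injectivity radius on $M_k$. By Klingenberg's standard lemma for closed Riemannian manifolds, for each $k$ one of two things happens at $p_k$: either there is a pair of conjugate points at distance $i_k$ along some geodesic out of $p_k$, or there is a closed geodesic loop through $p_k$ of length $2i_k$. The curvature bound $\|\Riem\|_\infty \le \Lambda$ combined with Rauch's comparison theorem rules out conjugate points at distance less than $\pi / \sqrt{\Lambda}$, so for $k$ large the first alternative is impossible and we are left with a short geodesic loop $\gamma_k$ of length $2 i_k$ at $p_k$.

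Next, I would rescale: set $\tilde g_k := i_k^{-2} g_k$. In the new metric, $\|\Riem_{\tilde g_k}\|_\infty \le i_k^2 \Lambda \to 0$, $\injrad_{\tilde g_k}(p_k) = 1$, and the loop $\gamma_k$ has length $2$. Applying Theorem \ref{CheegerCptness} to fixed-radius pointed geodesic balls around $p_k$, a subsequence converges in $C^{1,\alpha}$ to a pointed flat limit $(M_\infty, g_\infty, p_\infty)$ which still carries a closed geodesic loop of length $2$ through $p_\infty$; its local model is a flat cylinder or quotient thereof, and in particular its injectivity radius at the base point is bounded by $1$.

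The contradiction is then extracted from a volume comparison. The short geodesic loop forces the volume of $B_{\tilde g_k}(p_k, R)$ to grow, for fixed large $R$ and $k \gg 1$, much slower than the Euclidean rate of the nearly-flat model: one lifts the ball to the universal cover of a slightly enlarged ball, where the loop unwinds and an almost-flat Bishop--Gromov comparison applies; the covering degree grows like $D / i_k \to \infty$, so the downstairs volume is forced to decay linearly in $i_k$. Converting back via $\Vol_{g_k} = i_k^n \Vol_{\tilde g_k}$, this decay contradicts the uniform lower bound $V$ on $\Vol(M_k)$.

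The main obstacle is precisely this last volume-loss estimate: the diameter $\diam_{\tilde g_k} = D / i_k$ blows up under the rescaling, so one cannot simply appeal to Cheeger--Gromov precompactness on the whole manifold, and the covering-degree bookkeeping must be done carefully on the universal cover of a ball that is large enough to contain the unwound loop yet small enough to remain almost flat. The small-curvature upstairs is what keeps Bishop--Gromov quantitative and uniform in $k$.
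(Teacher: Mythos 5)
First, a remark on the comparison: the paper does not prove this lemma at all --- it is quoted verbatim from Cheeger--Ebin as a known result --- so your attempt is necessarily a from-scratch reconstruction of Cheeger's propeller/injectivity-radius estimate rather than an alternative to an argument in the text.

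As a reconstruction it has a genuine gap, and it sits exactly where you say the ``main obstacle'' is. The Klingenberg--Rauch reduction to a smoothly closed geodesic $\gamma_k$ of length $2i_k$ is fine, but the volume-loss step is only asserted, and the mechanism you propose for it does not work as stated: a short closed geodesic need not be homotopically nontrivial in $M_k$, let alone in a small metric ball around $p_k$ (think of a thin neck on a topological sphere), so the loop need not ``unwind'' in the universal cover of the enlarged ball and there is no covering of degree comparable to $D/i_k$. The two standard ways to close this are: (a) Cheeger's tube argument --- since $\diam M_k \le D$, all of $M_k$ lies in the normal tube of radius $D$ about $\gamma_k$, and the Rauch/Heintze--Karcher bound on the Jacobian of the normal exponential map (using only $\Sec \ge -\Lambda$) gives $\Vol(M_k) \le c(n,\Lambda,D)\, i_k$, contradicting $\Vol(M_k)\ge V$; or (b) the Cheeger--Gromov--Taylor counting argument, which replaces the universal cover by the pull-back metric $\exp_{p_k}^* g_k$ on a ball in $T_{p_k}M_k$ (where there are no cut points before the conjugate radius) and counts the $\sim r/(2i_k)$ preimages of $p_k$ obtained by iterating $\gamma_k$; this is precisely the device that makes the multiplicity count immune to $\gamma_k$ being contractible. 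Either route makes the rescaling and the $C^{1,\alpha}$-compactness step superfluous --- and note that Theorem \ref{CheegerCptness} as stated in the paper applies to closed manifolds with a diameter bound, not to pointed balls whose ambient diameter $D/i_k$ blows up, so that step would itself need a pointed local variant. The contradiction cannot come from the structure of the blow-up limit anyway (a flat cylinder is a perfectly consistent limit); it must come from the volume inequality, which is the one thing left unproved.
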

We will need to use this lemma in combination with the so-called \textit{harmonic coordinates}.  We recall the definition: given a manifold $(M, \, g)$ and an open set $U \subset M $ a mapping $y \daA{U}{\erre^{n+1}}$ is said to be \textit{harmonic} if it is a diffeomorphism and if it satisfies the equation 
\[
\Delta_g y = 0
\]
A detailed study on harmonic coordinates can be found in \cite[p.$304$-$307$]{Petersen} or in \cite[p.$523$]{Jost}. We just recall the following theorem\footnote{Actually, the result shown in \cite[p.$523$]{Jost} is more general and slightly different, but easily implies this formulation.}
\begin{teo}\label{Harmonic}
Let $(M, \, g)$ be a $n$-dimensional manifold, and let $x \in M$, $\alpha \in (0, \, 1)$ be given. Assume the sectional curvature of $M$ to be bounded by a positive number $\Lambda$. There exist $R_0=R_0(n, \, \Lambda, \, \injrad(x))$ and $C=C(\alpha, \, n, \, \Lambda R_0)$ and harmonic coordinates $y \daA{B_{R_0}(x)}{\erre^n}$ such that in the frame $\Set{\dde{y^1} \, \dots \, \dde{y^n}}$ the metric $g$ satisfies
\begin{equation}\label{HarmonicEq}
\norm{g - \delta}_{C^{1, \, \alpha}(B_{R_0(x)})} \le C 
\end{equation}
\end{teo}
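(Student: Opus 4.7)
Theorem \ref{Harmonic} is classical and I would establish it in three stages: build a preliminary chart via the exponential map; perturb it into harmonic coordinates by solving a Dirichlet problem; and derive the $C^{1,\,\alpha}$ estimate from the elliptic system that the metric satisfies in such a chart.

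First I would fix $r < \injrad(x)$ and take normal coordinates $z \daA{B_r(x)}{\erre^n}$ through $\exp_x$. The bound $\abs{\Sec^g} \le \Lambda$ combined with Jacobi field comparison yields $\norm{g_{ij} - \delta_{ij}}_{C^1(B_r)} \le c(n)\Lambda r^2$, so that for $r$ sufficiently small relative to $\injrad(x)$ and $\Lambda^{-1/2}$ the metric is already $C^1$-close to the Euclidean one in the $z$-chart.

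Next I would pass to harmonic coordinates by solving, for each $i = 1, \dots, n$, the Dirichlet problem $\Delta_g y^i = 0$ in $B_r(x)$ with boundary datum $y^i|_{\partial B_r} = z^i$. Since $\Delta_g z^i = -g^{jk}\Gamma^i_{jk}$ is of order $\Lambda r$ in the $z$-chart, the maximum principle together with standard gradient estimates give $\norm{y^i - z^i}_{C^1}$ small. Hence $dy$ remains invertible on a smaller ball $B_{R_0}(x)$, with $R_0 = R_0(n, \, \Lambda, \, \injrad(x))$, and $y = (y^1, \dots, y^n)$ is a diffeomorphism onto its image, i.e.\ an admissible coordinate chart.

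The decisive step is the estimate itself. In harmonic coordinates the contracted Christoffel identity $g^{jk} \Gamma^i_{jk} = 0$ holds, and a direct computation turns the classical formula $\Ric^g_{ij} = \partial_k \Gamma^k_{ij} - \partial_i \Gamma^k_{jk} + Q'(g, \partial g)$ into the quasilinear elliptic system
\[
g^{kl} \partial_k \partial_l g_{ij} = -2 \Ric^g_{ij} + Q(g, \partial g),
\]
with $Q$ quadratic in $\partial g$. The sectional curvature bound controls $\Ric^g$, and the preliminary $C^0$ closeness of $g$ to $\delta$ furnishes uniform ellipticity. Iterated Schauder estimates then produce the desired bound $\norm{g - \delta}_{C^{1,\,\alpha}(B_{R_0}(x))} \le C$ with $C = C(\alpha, \, n, \, \Lambda R_0)$. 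The main obstacle is keeping the bootstrap genuinely quantitative: one must work on suitably scaled balls and combine interior Schauder estimates with a cutoff so that the constant $C$ depends only on the stated parameters and not implicitly on higher-order norms of $g$ that are a priori uncontrolled.
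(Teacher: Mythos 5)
The paper does not actually prove Theorem \ref{Harmonic}: it is recalled from the literature (Petersen, Jost), with a footnote that the cited result "easily implies this formulation". So there is no internal proof to compare against, and your sketch has to stand on its own. Its architecture — preliminary chart, harmonic replacement via a Dirichlet problem, elliptic estimates for the metric — is indeed the standard one (Jost--Karcher, Anderson), but two of its steps have genuine gaps.

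The first is the claim that Jacobi field comparison yields $\norm{g_{ij}-\delta_{ij}}_{C^1(B_r)}\le c(n)\Lambda r^2$ in normal coordinates. A two-sided bound on $\Sec^g$ controls $g_{ij}$ in $C^0$ (Rauch) and its \emph{radial} derivatives (via the Riccati equation for the shape operator of geodesic spheres), but not the full gradient: the derivatives of Jacobi fields with respect to the initial direction satisfy a variational equation involving $\nabla\Riem$, which is not assumed bounded. This loss of derivatives in normal coordinates is exactly the phenomenon behind the DeTurck--Kazdan examples and is the reason harmonic coordinates are used in the first place. Everything downstream of this claim — the estimate $\Delta_g z^i=-g^{jk}\Gamma^i_{jk}=O(\Lambda r)$, the smallness of $\norm{y^i-z^i}_{C^1}$, the invertibility of $dy$ on $B_{R_0}(x)$ — rests on it. The standard repair is to replace normal coordinates by the Jost--Karcher ``almost linear'' coordinates built from (averaged) distance functions, whose Hessians, hence whose $g$-Laplacians, \emph{are} controlled by Hessian comparison under the sectional curvature and injectivity radius bounds alone.

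The second gap is the circularity of the final bootstrap, which you flag as "the main obstacle" but do not resolve. To apply Schauder to $g^{kl}\partial_k\partial_l g_{ij}=-2\Ric^g_{ij}+Q(g,\partial g)$ one needs the coefficients $g^{kl}$ in $C^\alpha$ and the right-hand side, quadratic in $\partial g$, in $C^\alpha$ (or at least in $L^p$ to start with Calder\'on--Zygmund theory) — that is, essentially the a priori control of $g$ in the harmonic chart that the theorem asserts. The resolution in the literature is to first establish a priori $C^{1,\alpha}$ (or $W^{2,p}$) bounds on the harmonic coordinate functions themselves, via gradient and Hessian estimates for harmonic functions under curvature bounds (Bochner-type identities), and only then read off $g^{ij}=g(\nabla y^i,\nabla y^j)$; the elliptic system for $g_{ij}$ serves for the higher regularity, not to initialize the estimate. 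As written, your argument assumes at the start what it is meant to prove.
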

Harmonic coordinates are also useful because they give a nice expression for the Ricci operator. Indeed, the following expression holds:
 \begin{equation}\label{RicciHarm}
 -\frac{1}{2} \Delta_{g} g_{ij} + Q_{ij}(g, \, D g) = \Ric^{g}_{ij} \mbox{ for every indexes } i, \, j
 \end{equation}
 where $Q_{ij}$ is a universal polynomial depending on $g$ and its first derivatives $Dg$. The computations can be found in \cite[p. $305$-$307$]{Petersen}.
\subsubsection*{Obata's theorem}
We state here the last ingredient of the section, the Obata's theorem, which gives us the exact value of the first eigenvalue of the spherical laplacian and a characterization of its kernel.
\begin{teo}\label{Obata}
Let $(M, \, g)$ be a closed manifold which satisfies the following condition on the Ricci tensor $\Ric$:
\begin{equation}
\Ric(X, \, X) \ge (n-1) \, g(X, \, X) \mbox{ for every vector field } X
\end{equation}
and the Laplacian condition:
\begin{equation}
-\Delta_g f = n  f \mbox{ for some } f
\end{equation}
Then $M$ is isometric to the round sphere $(\esse^n, \, \sigma)$ and we also have
\begin{equation}
\ker \Delta_\sigma + n = \set{ \phi_v  \mid \phi_v(x) = (v, \, x),\ v\in \erre^{n+1}}
\end{equation}
\end{teo}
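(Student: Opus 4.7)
The plan is the Lichnerowicz--Obata approach: I would use the Bochner formula applied to the eigenfunction $f$ to force the tensorial identity $\nabla^2 f = -fg$, then invoke the classical rigidity for this equation to produce an isometry with the round sphere, and finally identify $\ker(\Delta_\sigma+n)$ by the spherical-harmonic decomposition.

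Concretely, I would first integrate the Bochner--Weitzenb\"ock identity
\[
\tfrac{1}{2}\Delta_g |\nabla f|^2 = |\nabla^2 f|^2 + g(\nabla f,\nabla\Delta_g f) + \Ric(\nabla f,\nabla f)
\]
over $M$. Combining the pointwise Cauchy--Schwarz bound $|\nabla^2 f|^2\ge (\Delta_g f)^2/n$ with the Ricci hypothesis, the integrated identity collapses to
\[
0 \ge (n-1)\left[\int_M |\nabla f|^2 - \tfrac{1}{n}\int_M (\Delta_g f)^2\right].
\]
Substituting $\Delta_g f = -nf$ and integrating by parts ($\int |\nabla f|^2 = -\int f\,\Delta_g f = n\int f^2$) makes the right-hand side vanish, so both inequalities above are pointwise equalities. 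Equality in Cauchy--Schwarz forces the Obata equation $\nabla^2 f = \tfrac{\Delta_g f}{n} g = -fg$ on all of $M$ (assuming $f\not\equiv 0$; otherwise the conclusion is vacuous).

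Second, to extract the isometry I would pick a maximum point $p$ of $f$: there $\nabla f(p)=0$, and along any unit-speed geodesic $\gamma$ issuing from $p$ the function $h(t):=f(\gamma(t))$ satisfies $\ddot h + h = 0$ with $\dot h(0)=0$, giving $f\circ\gamma(t) = f(p)\cos t$. This pins down $\diam(M,g) = \pi$, shows the maximum set is a single point, and permits writing $g$ in geodesic polar coordinates around $p$ as $dt^2 + \sin^2 t\, \sigma_{\esse^{n-1}}$, which is the round metric. The resulting diffeomorphism $\psi\colon M\to\esse^n$ is the required isometry.

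Finally, since $(M,g)\cong(\esse^n,\sigma)$, identifying the kernel reduces to the standard fact that the $k$-th eigenspace of $-\Delta_\sigma$ on $\esse^n$ has eigenvalue $k(k+n-1)$, so the eigenvalue $n$ corresponds to $k=1$, whose eigenfunctions are precisely the restrictions to $\esse^n$ of linear functionals $x\mapsto (v,x)$ on $\erre^{n+1}$. The main obstacle is the rigidity step: converting the tensorial identity $\nabla^2 f = -fg$ into a genuine isometry requires controlling the cut locus and checking that the level sets $\{f=c\}$ are round $(n-1)$-spheres of the expected radius, which is the heart of Obata's original argument.
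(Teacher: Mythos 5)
This theorem is one of the paper's stated preliminaries: it is quoted as a classical result and the paper offers no proof of it, so there is nothing to compare against except the standard literature. Your proposal is the classical Lichnerowicz--Obata argument and its skeleton is correct: the integrated Bochner identity together with $|\nabla^2 f|^2\ge(\Delta_g f)^2/n$ and $\Ric\ge(n-1)g$ does collapse to an identity once $\Delta_g f=-nf$ is substituted, forcing $\nabla^2 f=-fg$ pointwise; and the second-eigenvalue description of $\ker(\Delta_\sigma+n)$ by linear functionals is standard. Two caveats. First, as you note yourself, the passage from $\nabla^2 f=-fg$ to the isometry is only sketched: the ODE $\ddot h+h=0$ along unit-speed geodesics from a maximum point is correct, but turning it into the polar-coordinate identification $g=dt^2+\sin^2 t\,\sigma_{\esse^{n-1}}$ requires showing that the exponential map at $p$ is a diffeomorphism up to distance $\pi$ and that the metric on geodesic spheres is what it should be; this is precisely the content of Obata's rigidity theorem and would need to be either carried out or cited. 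Second, the hypothesis as stated is satisfied by $f\equiv 0$, in which case no rigidity follows, so the theorem implicitly assumes a nontrivial eigenfunction; your parenthetical handles this, but the conclusion is then not ``vacuous'' so much as false without that assumption, so it deserves to be stated as a hypothesis rather than dismissed.
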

We also need a result proved in \cite{Gioffre2016} which shows how for a function  in the sphere the $L^p$-norm of $\Delta_\sigma f + n f$ ''almost controls'' the $W^{2, \, p}$-norm of $f$.
\begin{lemma}\label{ObataLike}
Let $f \in C^\infty(\esse^n)$ be given. Define $v_f \in \erre^{n+1}$ as
\[
v_f := (n+1)\fint_{\esse^n} z f(z) \, dV_\sigma
\]
Then, the following estimate holds:
\begin{equation}\label{ObataLikeEq}
\norm{f - \phi_{v_f}}_{W^{2, \, p}_\sigma(\esse^n)} \le C(n, \,p) \norm{\Delta_\sigma f + n f}_{L^p_\sigma(\esse^n)}
\end{equation} 
\end{lemma}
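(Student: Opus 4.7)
The plan is to reduce the inequality to a Fredholm-alternative estimate for the self-adjoint elliptic operator $L := \Delta_\sigma + n$ on the closed manifold $\esse^n$, whose kernel is explicitly identified by Theorem \ref{Obata}.

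First, I would verify that the vector-valued map $f \mapsto v_f$ acts as the identity on the kernel. Using the standard symmetry identity
\[
\int_{\esse^n} z_i \, z_j \, dV_\sigma = \frac{\Vol_n(\esse^n)}{n+1} \, \delta_{ij},
\]
a direct computation gives $v_{\phi_w} = w$ for every $w \in \erre^{n+1}$. Setting $g := f - \phi_{v_f}$, linearity yields $v_g = v_f - v_{\phi_{v_f}} = 0$, while $L\phi_{v_f}=0$ by Theorem \ref{Obata} implies $Lg = Lf = \Delta_\sigma f + n f$. Hence the statement reduces to the estimate
\[
\norm{g}_{W^{2,p}_\sigma(\esse^n)} \le C(n,p) \, \norm{L g}_{L^p_\sigma(\esse^n)} \quad \text{for every smooth } g \text{ with } v_g = 0.
\]

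Second, I would invoke the classical Calderón–Zygmund / Agmon–Douglis–Nirenberg $L^p$ elliptic estimate for $L$ on the closed manifold $\esse^n$, which gives
\[
\norm{g}_{W^{2,p}_\sigma(\esse^n)} \le C \coup*{\norm{L g}_{L^p_\sigma(\esse^n)} + \norm{g}_{L^p_\sigma(\esse^n)}}.
\]
The remaining task is to absorb the zeroth-order term using the normalisation $v_g = 0$.

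Third, I would remove the $\norm{g}_{L^p}$ term by a standard contradiction/compactness argument. Assume the desired estimate fails: then there exists a sequence of smooth functions $g_k$ with $v_{g_k}=0$, $\norm{g_k}_{W^{2,p}_\sigma}=1$ and $\norm{L g_k}_{L^p_\sigma}\to 0$. By the Rellich–Kondrachov compact embedding $W^{2,p}(\esse^n) \hookrightarrow L^p(\esse^n)$, a subsequence converges in $L^p$ to some $g_\infty$ that solves $Lg_\infty = 0$ distributionally. Elliptic regularity forces $g_\infty$ to be smooth and, by Theorem \ref{Obata}, $g_\infty = \phi_w$ for some $w \in \erre^{n+1}$. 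Since the linear functional $h \mapsto v_h$ is continuous on $L^p(\esse^n)$, we obtain $w = v_{g_\infty} = \lim_k v_{g_k} = 0$, so $g_\infty \equiv 0$. Plugging $g_k \to 0$ in $L^p$ back into the elliptic estimate yields $\norm{g_k}_{W^{2,p}_\sigma} \to 0$, contradicting $\norm{g_k}_{W^{2,p}_\sigma} = 1$.

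The principal technical point is the compactness step: one needs the Rellich embedding together with the continuity of the projection onto $\ker L$ on an $L^p$ scale; these are standard on the sphere but must be handled carefully since the $v_{g_k}=0$ constraint is what prevents concentration on the kernel in the limit. No conceptually new tool beyond Obata's theorem and classical linear elliptic theory is required.
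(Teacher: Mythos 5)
Your argument is correct and complete. Note first that the paper does not prove Lemma \ref{ObataLike} at all: it is quoted from \cite{Gioffre2016}, so there is no in-house proof to compare against, and what you have written is a legitimate self-contained substitute. The three ingredients all check out: the identity $v_{\phi_w}=w$ follows from $\fint_{\esse^n} z_i z_j \, dV_\sigma = \delta_{ij}/(n+1)$, so $g:=f-\phi_{v_f}$ satisfies $v_g=0$ and $Lg=Lf$ because the coordinate functions are degree-one spherical harmonics with $-\Delta_\sigma z_i = n z_i$; the global Calder\'on--Zygmund estimate $\norm{g}_{W^{2,p}}\le C(\norm{Lg}_{L^p}+\norm{g}_{L^p})$ is standard on a closed manifold; and the compactness step is the usual Peetre-type contradiction argument, where the constraint $v_{g_k}=0$ passes to the $L^p$-limit (since $h\mapsto v_h$ is $L^1$-continuous) and, combined with Obata's identification of $\ker(\Delta_\sigma+n)$ as the span of the $\phi_v$, kills the limit. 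The only price of this route is that the constant $C(n,p)$ is not effective, but the lemma claims nothing more. An alternative, more explicit route (closer in spirit to what one finds in the De Lellis--M\"uller line of papers for $p=2$) is to expand $f$ in spherical harmonics and observe that $f-\phi_{v_f}$ is exactly the projection away from the first eigenspace, on which $\Delta_\sigma+n$ is invertible with spectral gap; your compactness argument achieves the same conclusion for all $p\in(1,\infty)$ without needing $L^p$ multiplier theory, which is arguably the cleaner choice here.
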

\subsubsection*{Radially parametrized hypersurfaces}
We exhibit the parametrization on which we will work. Let us assume for a moment that $\Sigma$ is the border of an open, convex set $U$ containing $0$. We can give the following radial parametrization for $\Sigma$:
\begin{equation}\label{PsiDef}
\psi \daA{\esse^n}{\Sigma },\ \psi(x) := \rho(x) \, x := e^{f(x)} \, x
\end{equation}
Clearly $\psi$ is a smooth diffeomorphism. If $U$ does not contain $0$ we can still give such parametrization by properly translating $U$. We will say that $\Sigma$ is \textit{radially parametrized} if it can be written as the image of such $\psi$. We call $\rho$ the \textit{radius} of $\Sigma$ and $f$ the \textit{logarithmic radius} of $\Sigma$.
We will also say that a hypersurface $\Sigma$ is \textit{admissible} if satisfies the following:
\begin{align}
\norm{A}_\infty &\le \Lambda \label{2ffBound} \\
\Vol_n(\Sigma) &= \Vol_n(\esse^n) \label{VolEq} \\
\diam \Sigma &\le D \label{DiamBound}
\end{align}
We notice that the Gauss equation \eqref{GaussEq} tells us that the bound on the second fundamental form implies a bound on the Riemann tensor. Indeed, with a simple computation we obtain
\[
\abs{\Riem}^2 = \frac{1}{4} \abs{A \nomizu A}^2 =  \abs{A}^4 - \abs{A^2}^2 \le \Lambda^4
\]
Hence an admissible hypersurface lies in the Cheeger-Gromov class $\Ch_{\Lambda^2, \, V, \,D}$.
We recall the expression for the main geometrical quantities of a radial parametrized hypersurface. It is important to remark that here and later in paper, we use the letter $g$ to denote both the pull-back metric $\psi^*g$ and the metric $g$ on $\Sigma$. Although this is an abuse of notation, it strongly simplifies the notation.
\begin{lemma}\label{Computations}
Let $\psi$ be as in \eqref{PsiDef}. Then we have the following expressions:
\begin{align}
g_{ij} &= e^{2 f} \coup*{\sigma_{ij} + \nabla_i f \, \nabla_j f } \label{g} \\
g^{ij} &= e^{- 2 f} \coup*{\sigma_{ij} - \frac{\nabla_i f \, \nabla_j f}{1 + \abs{\nabla f}^2} } \label{ginv} \\
\ni(x) &= \frac{1}{\sqrt{1 + \abs{\nabla f}^2}} \coup*{x - \grad_\sigma f(x)} \label{norm} \\
A_{ij} &= \frac{e^f}{\sqrt{1 + \abs{\nabla f}^2}} \coup*{ \sigma_{ij} + \nabla_i f \, \nabla_j f - \nabla^2_{ij} f} \label{Aff} \\
A^i_j 
&= \frac{e^{-f}}{\sqrt{1 + \abs{\nabla f}^2}} \coup*{ \delta^i_j - \nabla^i \nabla_j f + \frac{1}{1 + \abs{\nabla f}^2} \nabla^i f \, \nabla^2 f [\nabla f]_j} \label{Aalta} \\
H &:= g^{ij} A_{ij} = e^{-f} \coup*{ \frac{n}{\sqrt{1 + \abs{\nabla f}^2}} - \divv_\sigma \coup*{\frac{\nabla f}{\sqrt{1 + \abs{\nabla f}^2}}}} \label{MeanC} \\
dV_g &= e^{nf} \, \sqrt{1 + \abs{\nabla f}^2} \, dV_\sigma \label{Volume} \\
\Gammag_{ij}^k 
&= \Gamma_{ij}^k + \frac{1}{1 + \abs{\nabla f}^2} \nabla^2_{ij} f \, \nabla^k f + \coup*{\nabla_i f \, \delta^k_i + \nabla_j f \, \delta^k_i - \nablag^k f \, g_{ij}}   \label{Christoffel} 
\end{align}
\end{lemma}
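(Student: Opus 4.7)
The proof is a direct, step-by-step calculation from the parametrization $\psi(x)=e^{f(x)}x$, where I view $T_x\esse^n$ as the hyperplane in $\erre^{n+1}$ orthogonal to $x$. Nothing conceptually subtle happens; the work is organized so that each formula reduces to a routine linear-algebra identity once the right pieces are in place. I would carry out the formulas in the order they are stated, because later computations reuse earlier ones.

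The core input is the differential: for a tangent vector $X$ on $\esse^n$,
\[
d\psi(X) \;=\; e^{f}\bigl(X + df(X)\,x\bigr).
\]
From this and the orthogonality $\langle\partial_i,x\rangle=0$, $\langle\partial_i,\partial_j\rangle=\sigma_{ij}$, the identity \eqref{g} for $g_{ij}$ is immediate. The inverse formula \eqref{ginv} follows from the Sherman--Morrison formula for the rank-one perturbation $\sigma_{ij}+\nabla_i f\,\nabla_j f$. For the outer normal \eqref{norm}, one checks directly that $x-\grad_\sigma f$ is orthogonal to $d\psi(X)$ for every $X$ (the two cross terms cancel because $\langle x,\grad_\sigma f\rangle=0$) and that it has radial component pointing outward since $U$ contains $0$; the length $\sqrt{1+|\nabla f|^2}$ provides the normalization. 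The volume formula \eqref{Volume} falls out from $\det(\sigma+\nabla f\otimes\nabla f)=\det(\sigma)(1+|\nabla f|^2)$ and $dV_g=\sqrt{\det g}\,dV_\sigma$.

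For the second fundamental form \eqref{Aff}, the cleanest route is to differentiate the identity $\langle d\psi(X),\nu\circ\psi\rangle=0$ to get $A_{ij}=-\langle\psi_{,ij},\nu\rangle$. Writing $\psi_{,ij}$ in terms of the sphere's Hessian requires the fact that on $\esse^n$ the ambient second derivative of coordinate vector fields decomposes as the sphere's covariant derivative plus the normal piece $-\sigma_{ij}x$; after substituting \eqref{norm}, the tangential terms in $\psi_{,ij}$ have vanishing inner product with $\nu$ up to the expected combinations, and one reads off \eqref{Aff}. The mixed form \eqref{Aalta} is then just the contraction $A^i_j=g^{ik}A_{kj}$, carried out using \eqref{ginv} and the identity $(\sigma^{ik}-\frac{\nabla^i f\nabla^k f}{1+|\nabla f|^2})(\sigma_{kj}+\nabla_k f\nabla_j f)=\delta^i_j$. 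Contracting \eqref{Aalta} with $g^{ij}$ gives $H$ and, after recognizing the combination
\[
\frac{n}{\sqrt{1+|\nabla f|^2}}-\frac{\Delta_\sigma f}{\sqrt{1+|\nabla f|^2}}+\frac{\nabla^2 f[\nabla f,\nabla f]}{(1+|\nabla f|^2)^{3/2}}
\]
as $e^{f}H$ and as $\divv_\sigma\!\bigl(\nabla f/\sqrt{1+|\nabla f|^2}\bigr)$, one obtains \eqref{MeanC}.

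Finally, for the Christoffel symbols \eqref{Christoffel}, I apply the formula $\Gammag^{k}_{ij}=\tfrac{1}{2}g^{kl}(\partial_i g_{jl}+\partial_j g_{il}-\partial_l g_{ij})$ to the expression \eqref{g}, convert partial derivatives of $\sigma_{ij}$ and $\nabla_i f\nabla_j f$ into covariant ones (the spherical Christoffel symbols $\Gamma^k_{ij}$ account for the difference), and contract with $g^{kl}$ from \eqref{ginv}. The $e^{2f}$ factors in $g_{ij}$ and $g^{kl}$ cancel, leaving the conformal gradient terms plus a contribution involving the spherical Hessian $\nabla^2_{ij}f$. The main obstacle is exactly here: keeping the bookkeeping straight between partial and covariant derivatives, and correctly handling the rank-one term in $g^{ij}$, so that the contractions collapse to the stated compact form. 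Once one is careful with index placement, all the cross terms organize themselves as in \eqref{Christoffel}.
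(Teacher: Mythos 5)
Your computation is correct and is the standard direct verification; the paper itself gives no proof of this lemma, deferring to \cite[p.~15--17]{Gioffre2016}, where the same route (differential of $\psi$, Sherman--Morrison for $g^{ij}$, $A_{ij}=-\langle\psi_{,ij},\nu\rangle$ via the Gauss formula on $\esse^n$, and the Koszul formula for the Christoffel symbols) is carried out. All the individual steps you outline check out against the stated formulas, including the sign conventions fixed in \eqref{2ndFF}.
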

The computation of lemma \ref{Computations} is given in \cite[p. $15$-$17$]{Gioffre2016}.
\section{Proof of the main theorem}
We state here the propositions which lead to the theorem's proof, and then prove \ref{MainThm}.
\begin{prop}\label{AverageScalarThm}
Let $(M, \, g)$ be a $n$-dimensional, closed manifold be in the Cheeger-Gromov class $ \Ch_{\Lambda, \, V, \, D}$ with $ \Vol_g(M) = V$.  There exists a constant $C=C(n, \, p, \, \Lambda, \, V, \, D)>0$ such that the following estimate holds:
\begin{equation}\label{AverageScalarEq}
\norm{R - \overline{R}}_{L^p_g(M)} \le C \norm{\tRic}_{L^p_g(M)}
\end{equation}
\end{prop}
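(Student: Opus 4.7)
The plan is to adapt the duality argument of De Lellis--Topping from the Hilbertian $L^2$ setting to general $L^p$. Since $R - \overline{R}$ has zero mean, I would first introduce the dual test function $u$ by solving the Poisson equation
\begin{equation*}
\Delta_g u = \abs{R - \overline{R}}^{p-2}(R - \overline{R}) - c, \qquad \fint_M u \, dV_g = 0,
\end{equation*}
with $c \in \erre$ chosen so that the right-hand side has zero mean (hence $u$ exists by standard theory). Pairing the equation against $R - \overline{R}$ and using that $\int_M \Delta_g u \, dV_g = 0$ kills both the constant $c$ and the constant $\overline{R}$, giving
\begin{equation*}
\int_M \abs{R - \overline{R}}^p \, dV_g = \int_M (R - \overline{R}) \, \Delta_g u \, dV_g = \int_M R \, \Delta_g u \, dV_g.
\end{equation*}

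Next, the twice-contracted Bianchi identity from Lemma \ref{DiffBian}, together with the decomposition $\Ric = \tRic + \frac{R}{n} g$, yields $\nabla R = \frac{2n}{n-2} \divv \tRic$, which critically uses $n \ge 3$. Two integrations by parts, the symmetry of $\tRic$, and H\"older's inequality then give
\begin{equation*}
\norm{R - \overline{R}}_{L^p_g}^p = \frac{2n}{n-2} \int_M \langle \tRic, \nabla^2 u \rangle \, dV_g \le \frac{2n}{n-2} \norm{\tRic}_{L^p_g} \norm{\nabla^2 u}_{L^{p'}_g},
\end{equation*}
where $p' = p/(p-1)$.

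The proof thus reduces to a Calder\'on--Zygmund estimate
\begin{equation*}
\norm{\nabla^2 u}_{L^{p'}_g} \le C(n, p, \Lambda, V, D) \norm{\Delta_g u}_{L^{p'}_g}
\end{equation*}
for zero-mean solutions of the Poisson equation, since then the right-hand side is bounded by $C \norm{R - \overline{R}}_{L^p_g}^{p-1}$ and dividing yields the desired linear estimate. This last step is the main obstacle, because the constant must depend only on the Cheeger--Gromov parameters. I would obtain it by covering $M$ with a controlled number of harmonic coordinate balls from Theorem \ref{Harmonic}, whose existence is guaranteed by the injectivity radius lower bound of Lemma \ref{InjControl}. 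On each such chart the metric is uniformly $C^{1,\alpha}$-close to the Euclidean one, so $-\Delta_g$ is a uniformly elliptic operator in non-divergence form with H\"older-continuous coefficients, and standard Euclidean Calder\'on--Zygmund theory supplies a local $W^{2,p'}$ bound with constants depending only on $n$, $p$, and the chart geometry. A partition of unity with interior cutoff estimates, together with a Poincar\'e inequality to handle the zero-mean normalization via the spectral gap in the Cheeger--Gromov class, would then assemble these local bounds into the desired global estimate with quantitative dependence on $(n, p, \Lambda, V, D)$.
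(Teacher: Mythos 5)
Your proposal is correct in outline, but it takes a genuinely different route from the paper. You generalize the De Lellis--Topping duality argument to $L^p$: solve $\Delta_g u = \abs{R-\overline{R}}^{p-2}(R-\overline{R}) - c$, integrate by parts twice through $\nabla R = \tfrac{2n}{n-2}\divv\tRic$, and close with H\"older plus a global Calder\'on--Zygmund bound $\norm{\nabla^2 u}_{L^{p'}} \le C\norm{\Delta_g u}_{L^{p'}}$ with constants depending only on the Cheeger--Gromov parameters. The paper instead works directly (and locally) with the first-order equation: it reduces the proposition to a lemma stating that $\nabla \gu = \divv_g \gf$ implies $\norm{\gu - \overline{\gu}}_{L^p} \le C\norm{\gf}_{L^p}$, proves this in harmonic coordinate balls of uniform radius by splitting $\gu = \gv + \gw$ into solutions of flat Poisson problems with double-divergence and divergence-form data (citing local $L^p$ estimates for each), and then patches the local estimates $\norm{\gu - \lambda(x)}_{L^p(B_{r_0}(x))} \le C\norm{\gf}_{L^p}$ into a global one via a covering lemma. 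Your approach buys a cleaner, more global structure and makes the constant $\tfrac{2n}{n-2}$ explicit at the level of the duality pairing; the paper's approach avoids second-order elliptic theory for $u$ entirely and only ever estimates $R$ itself, at the price of a local-to-global patching step. Both ultimately rest on the same inputs (Lemma \ref{InjControl} and Theorem \ref{Harmonic}).

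The one place where your sketch is genuinely thin is the final global estimate $\norm{u}_{W^{2,p'}} \le C\norm{\Delta_g u}_{L^{p'}}$ for zero-mean $u$. The interior Schauder/CZ estimates in harmonic charts give $\norm{u}_{W^{2,p'}} \le C(\norm{\Delta_g u}_{L^{p'}} + \norm{u}_{L^{p'}})$, and you must then remove the lower-order term. The ``spectral gap'' you invoke is an $L^2$ statement and does not directly yield $\norm{u}_{L^{p'}} \le C\norm{\Delta_g u}_{L^{p'}}$ when $p' \neq 2$; you would need either a contradiction--compactness argument within $\Ch_{\Lambda,V,D}$ (using Theorem \ref{CheegerCptness}) or uniform Green's function bounds to get this with a constant depending only on $(n,p,\Lambda,V,D)$. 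This is fixable, and comparable in weight to the local estimates the paper cites from the literature, but as written it is an unproved step rather than a routine one.
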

\begin{prop}\label{WeylThm}
Let $\Sigma$ be an admissible hypersurface in $\erre^{n+1}$. There exists a constant $C=C(n, \, p, \, \Lambda, \, V, \, D)>0$ such that the following estimate holds:
\begin{equation}\label{WeylEq}
\norm{W_g}_{L^p_g(\Sigma)} \le C \coup*{ \norm{\tRic}_{L^p_g(\Sigma)} + \abs{ \Rm - n(n-1)} } 
\end{equation}
where $n(n-1)$ is the scalar curvature $R_\sigma$ of the unit round sphere.
\end{prop}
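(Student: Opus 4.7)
The plan is to reduce the tensorial estimate \eqref{WeylEq} to a pointwise algebraic inequality among polynomials in the principal curvatures of $\Sigma$, then integrate and use Proposition \ref{AverageScalarThm} to recast $\norm{R-n(n-1)}_{L^p_g(\Sigma)}$ in terms of $\norm{\tRic}_{L^p_g(\Sigma)}$ and $\abs{\overline R-n(n-1)}$.

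\textbf{Step 1 (Algebraic setup).} By Theorem \ref{GaussThm} one has $\Riem=\tfrac12 A\nomizu A$, and substituting into \eqref{RiemDecompEq} gives
\[
W \,=\, \tfrac12 A\nomizu A \;-\; \tfrac{R}{2n(n-1)}\,g\nomizu g \;-\; \tfrac{1}{n-2}\,\tRic\nomizu g.
\]
Fix $x\in\Sigma$ and choose an orthonormal frame diagonalising $A(x)$ with principal curvatures $\lambda_{1},\dots,\lambda_{n}$. In this frame the only non-vanishing components of $\Riem$, $g\nomizu g$, $\tRic\nomizu g$, and hence of $W$, are of type $(ijij)$. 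Setting $H=\sum_{k}\lambda_k$, $\bar\lambda=H/n$ and $\nu_i=\lambda_i-\bar\lambda$, a direct polynomial expansion (the \emph{technique} announced in the introduction) produces the identities
\[
(n-2)W_{ijij} \,=\, (n-2)\nu_i\nu_j+\nu_i^{2}+\nu_j^{2}-\tfrac{|\nu|^{2}}{n-1}, \qquad \tRic_{ii} \,=\, (n-2)\bar\lambda\,\nu_i-\nu_i^{2}+\tfrac{|\nu|^{2}}{n},
\]
\[
R-n(n-1) \,=\, n(n-1)(\bar\lambda^{2}-1)-|\nu|^{2}.
\]

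\textbf{Step 2 (Pointwise polynomial inequality).} Admissibility confines $(\lambda_1,\dots,\lambda_n)$ to the compact cube $K:=[-\Lambda,\Lambda]^n$. I consider on $K$ the ratio
\[
F(\lambda) \,:=\, \frac{\abs{W}}{\abs{\tRic}+\abs{R-n(n-1)}}.
\]
Its denominator vanishes only when $\tRic=0$---which by inspection of the formula above forces $\nu_i=0$ for all $i$, i.e.\ $\lambda_1=\dots=\lambda_n=:\lambda_0$ (umbilicity)---and simultaneously $R=n(n-1)$, which then forces $\lambda_0=\pm1$. At either of these two exceptional points $|W|$ vanishes as well. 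A local expansion around $\lambda_0\mathbf 1$ shows that $|W|$ is quadratic in $\lambda-\lambda_0\mathbf 1$ while $|\tRic|+|R-n(n-1)|$ is linear, so $F$ extends continuously to $K$ by the value $0$ at the exceptional points. Compactness of $K$ then yields $F\le C(n,\Lambda)$ uniformly, i.e.\ the pointwise bound
\[
\abs{W}_g \,\le\, C(n,\Lambda)\bigl(\abs{\tRic}_g+\abs{R-n(n-1)}\bigr) \qquad \text{on }\Sigma.
\]

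\textbf{Step 3 ($L^p$ integration).} Taking $L^p$-norms and using the triangle inequality,
\[
\norm{W}_{L^p_g(\Sigma)} \,\le\, C\bigl(\norm{\tRic}_{L^p_g(\Sigma)}+\norm{R-n(n-1)}_{L^p_g(\Sigma)}\bigr).
\]
The Gauss equation yields $|\Riem|\le\Lambda^{2}$, so an admissible hypersurface lies in $\Ch_{\Lambda^{2},V,D}$ and Proposition \ref{AverageScalarThm} applies: $\norm{R-\overline R}_{L^p_g(\Sigma)}\le C\norm{\tRic}_{L^p_g(\Sigma)}$. Since the volume is fixed, $\norm{\overline R-n(n-1)}_{L^p_g(\Sigma)}=V^{1/p}\abs{\overline R-n(n-1)}$, and a further triangle inequality then produces \eqref{WeylEq}.

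\textbf{Main obstacle.} The delicate point is Step 2: one must identify the zero set of the denominator of $F$ and verify that $|W|$ vanishes to strictly higher order than $|\tRic|+|R-n(n-1)|$ at each of its zeros. The identification of the zero set rests on an Obata-type rigidity observation (umbilic together with $R=n(n-1)$ forces $A=\pm g$), and the order comparison follows from the structural fact that $W$ is purely quadratic in the traceless part of $A$, whereas $\tRic$ is linear in it. Without the uniform bound $\norm{A}_\infty\le\Lambda$ the cube $K$ would be non-compact and the constant $C(n,\Lambda)$ could not be extracted, so admissibility is essential throughout.
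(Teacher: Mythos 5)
Your overall strategy coincides with the paper's: diagonalise $A$ pointwise, reduce \eqref{WeylEq} to a polynomial inequality in the principal curvatures, extract a uniform constant from compactness of the eigenvalue region determined by $\norm{A}_\infty \le \Lambda$, and then integrate and invoke Proposition \ref{AverageScalarThm}. The only structural difference is that you compare $\abs{W}$ directly with $\abs{\tRic}+\abs{R-n(n-1)}$, whereas the paper routes through the intermediate quantity $(A-g)\nomizu(A+g)$ and the comparison lemma \ref{LemmaPolyDue}; both routes hinge on the same algebraic fact, namely the identification of the common zero set.

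That identification is exactly where your argument has a gap. In Step 2 you assert that $\tRic=0$ alone, ``by inspection'', forces $\nu_i=0$ for all $i$, i.e.\ umbilicity. This is false. The condition $\tRic=0$ reads $H\lambda_i-\lambda_i^2=R/n$ for every $i$, so the $\lambda_i$ need only be roots of a single quadratic: for $n=4$ the point $\lambda=(t,t,-t,-t)$ gives $H=0$ and $\Ric=-t^2\delta$, hence $\tRic=0$ without umbilicity; even under the convexity assumption $A\ge 0$, the point $\lambda=(a,0,\dots,0)$ gives $\Ric=0$ and again $\tRic=0$ without umbilicity. What is true --- and what you actually need --- is that the \emph{simultaneous} system $\tRic=0$ and $R=n(n-1)$, equivalently $H\lambda_i-\lambda_i^2=n-1$ for all $i$, has only the solutions $\pm(1,\dots,1)$. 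In the counterexamples above one happens to have $R\le 0\ne n(n-1)$, but this must be verified for all configurations; the standard argument shows the $\lambda_i$ take at most two values $a,b$ with $a+b=H$ and $ab=n-1$, and then rules out $a\ne b$ by a sign and counting argument. This is precisely the content of the paper's analysis of system \eqref{PolyRicci} in its final section, and without it the zero set of the denominator of your function $F$ --- the crux of the compactness argument --- is unsupported. Once that lemma is supplied, the rest of your proof (the order-of-vanishing comparison at $\pm\mathbf{1}$, with $\abs{W}$ quadratic in $\nu$ and $\abs{\tRic}+\abs{R-n(n-1)}$ comparable to $\abs{\lambda\mp\mathbf{1}}$ to first order, followed by the $L^p$ integration and Proposition \ref{AverageScalarThm}) goes through.
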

\begin{prop}\label{ApproxThm}
Let $\Sigma$ be a radially parametrized, admissible hypersurface in $\erre^{n+1}$ and let $\alpha \in (0, \, 1)$ be given. For every $\epsilon>0$ there exists a positive number $\delta=\delta(n, \, p, \, \Lambda, \, D, \, \alpha)$ such that if the traceless Ricci tensor and the Weyl tensor are small, namely 
\[
\norm{\tRic}_{L^p_g(\Sigma)} + \norm{W}_{L^p_g(\Sigma)} \le \delta
\]
then the pull-back metric is $C^{1, \, \alpha}$-near to the round sphere metric, namely 
\begin{equation}\label{ApproxEq}
\norm{\psi^*g - \sigma}_{C^{1, \, \alpha}(\esse^n)} \le \epsilon
\end{equation}
In particular, the logarithmic radius $f$ satisfies the estimate:
\begin{align}
\norm{f}_\infty &\le C \epsilon  \label{ApproxFirst}\\
\norm{\nabla f}_\infty &\le C \sqrt{\epsilon}  \label{ApproxSecond} \\
 \sup \abs{\nabla f} \, \abs{\nabla^2 f}  &\le C \sqrt{\epsilon} \label{ApproxThird} 
\end{align}
for some constant $C$ not depending on $\epsilon$.
\end{prop}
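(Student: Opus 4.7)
The plan is to argue by contradiction, combining Cheeger--Gromov compactness (Theorem \ref{CheegerCptness}) with elliptic bootstrapping in harmonic coordinates and the rigidity encoded in the Riemann decomposition \eqref{RiemDecompEq}. Assume \eqref{ApproxEq} fails: there exist $\epsilon_0 > 0$ and a sequence of admissible, radially parametrized hypersurfaces $\Sigma_k$, with logarithmic radii $f_k$ and pull-back metrics $\psi_k^* g_k$ on $\esse^n$, such that $\norm{\tRic_k}_{L^p_g(\Sigma_k)} + \norm{W_k}_{L^p_g(\Sigma_k)} \to 0$ while $\norm{\psi_k^* g_k - \sigma}_{C^{1,\alpha}(\esse^n)} > \epsilon_0$. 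The Gauss equation \eqref{GaussEq} and $\norm{A}_\infty \le \Lambda$ give $\abs{\Riem^{g_k}} \le \Lambda^2$, placing each $(\Sigma_k, g_k)$ in $\Ch_{\Lambda^2, V, D}$; Theorem \ref{CheegerCptness} and Lemma \ref{InjControl} then provide, up to subsequence, diffeomorphisms $\phi_k$ onto a limit $(M, g_\infty)$ with $\phi_k^* g_k \to g_\infty$ in $C^{1,\alpha}$ and a uniform lower bound on the injectivity radius.

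Next I identify $(M, g_\infty)$ as the round sphere. In the harmonic coordinates of Theorem \ref{Harmonic}, equation \eqref{RicciHarm} reads $-\tfrac{1}{2}\Delta_{g_k} g_k + Q(g_k, Dg_k) = \Ric^{g_k}$. Proposition \ref{AverageScalarThm} gives $\norm{R_k - \overline{R_k}}_{L^p} \to 0$, which combined with $\norm{\tRic_k}_{L^p} \to 0$ shows that $\Ric^{g_k}$ converges in $L^p$ to a constant multiple of $g_\infty$. Standard $L^p$ elliptic regularity then promotes the $C^{1,\alpha}$ convergence of $\phi_k^* g_k$ to $W^{2,p}$ convergence in each harmonic chart, so $\tRic_{g_\infty} = 0$ and $W_{g_\infty} = 0$ a.e. The orthogonal decomposition \eqref{RiemDecompEq} and Schur's lemma force constant sectional curvature, and the volume normalization $\Vol(M) = \Vol(\esse^n)$ pins its value at $1$, so $(M, g_\infty)$ is isometric to $(\esse^n, \sigma)$.

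To descend to the radial parametrization, I use that admissibility together with formulas \eqref{g} and \eqref{Aff} yields a uniform $W^{2, \infty}$ bound on each $f_k$: $\norm{f_k}_\infty$ is controlled by diameter and convexity, $\norm{\nabla f_k}_\infty$ by the lower bound on the radial component of the outer normal \eqref{norm}, and $\norm{\nabla^2 f_k}_\infty$ by $\norm{A}_\infty$ through \eqref{Aff}. Arzel\`a--Ascoli then gives, up to a further subsequence, $f_k \to f_\infty$ in $C^{1,\alpha}$; the image hypersurface $\{e^{f_\infty(x)} x\}$ must agree with the limit sphere above, and by the centering convention implicit in the radial parametrization it is $\esse^n$ itself, so $f_\infty \equiv 0$ and $\psi_k^* g_k \to \sigma$ in $C^{1,\alpha}$, contradicting the choice of $\epsilon_0$. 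Once \eqref{ApproxEq} is established, estimates \eqref{ApproxFirst}--\eqref{ApproxThird} follow pointwise from \eqref{g} and \eqref{Aff}: tracing $g_{ij} = e^{2f}(\sigma_{ij} + \nabla_i f\, \nabla_j f)$ against $\sigma$ gives $\abs{e^{2f}(n + \abs{\nabla f}^2) - n} \le C\epsilon$, from which $\abs{f} \le C\epsilon$ and $\abs{\nabla f}^2 \le C\epsilon$; inserting into \eqref{Aff} and using $\abs{A} \le \Lambda$ bounds $\abs{\nabla^2 f}$, and combining with $\abs{\nabla f} = O(\sqrt{\epsilon})$ yields \eqref{ApproxThird}.

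The main obstacle is the bootstrap from $C^{1,\alpha}$ to $W^{2,p}$ regularity of the metric: $L^p$-smallness of $\tRic$ and $W$ alone does not let one pass to the limit in the full Riemann tensor, so the combination of harmonic coordinates, the Ricci equation \eqref{RicciHarm}, $L^p$ elliptic theory, and Proposition \ref{AverageScalarThm} is essential to recover the Einstein-like rigidity of the limit. A secondary subtlety is the implicit centering needed to identify the limit sphere with $\esse^n$ itself rather than a translate, so that $f_\infty \equiv 0$.
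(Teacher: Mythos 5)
Your overall strategy coincides with the paper's: argue by contradiction, place the sequence in the Cheeger--Gromov class via the Gauss equation, and identify the limit as the round sphere by combining harmonic coordinates, the Ricci equation \eqref{RicciHarm}, Proposition \ref{AverageScalarThm} (together with a further subsequence extraction so that $\overline{R^{g_k}} \to c \ge 0$, using $0 \le R^{g_k} \le n^2\Lambda^2$ from convexity --- make this explicit), the vanishing of the limit Weyl tensor, the decomposition \eqref{RiemDecompEq}, and the volume normalization. The genuine gap is in how you close the contradiction. The paper applies the compactness theorem directly to the sequence $\set{(\esse^n, \, \psi_k^*g_k)}$ of metrics on the \emph{fixed} sphere, so the limit $g$ is a metric on $\esse^n$ and $g=\sigma$ immediately contradicts $\norm{\psi_k^*g_k - \sigma}_{C^{1,\alpha}} \ge \epsilon_0$. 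You instead pass to an abstract limit $(M, \, g_\infty)$ and then try to ``descend to the radial parametrization'' by Arzel\`a--Ascoli on $f_k$. But admissibility gives only a uniform $W^{2,\infty}$ bound on $f_k$, hence precompactness of $\set{f_k}$ in $C^{1,\alpha}$; since by \eqref{g} the metric $\psi_k^*g_k$ depends on $f_k$ through its first derivatives, this yields at best $C^{0,\alpha}$ convergence of $\psi_k^*g_k$, which does not contradict the assumed $C^{1,\alpha}$ lower bound. To close the argument you need uniform $C^{1,\alpha'}$ control of the metrics in a fixed atlas for some $\alpha' > \alpha$ (this is exactly what the harmonic-coordinate estimates of Theorem \ref{Harmonic} and Lemma \ref{InjControl} supply), and you would additionally have to justify that the abstract Cheeger--Gromov limit is isometric to the induced metric on $\set{e^{f_\infty(x)}x}$ and that the centering forces $f_\infty \equiv 0$; neither is automatic from what you wrote.

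A smaller but real issue concerns \eqref{ApproxFirst}--\eqref{ApproxSecond}: the traced identity $\abs{e^{2f}(n + \abs{\nabla f}^2) - n} \le C\epsilon$ alone does not force $\abs{f} \le C\epsilon$ and $\abs{\nabla f}^2 \le C\epsilon$, since a very negative $f$ can be compensated by a large $\abs{\nabla f}$. You need the full tensor inequality: either test it against a direction orthogonal to $\nabla f$ to isolate $e^{2f}-1$ first, or do as the paper does and evaluate at the extrema of $\rho$, where $\nabla\rho = 0$, and then recover the gradient bound from the remaining term. Your derivation of \eqref{ApproxThird} from \eqref{Aff} and $\norm{A}_\infty \le \Lambda$ is correct and in fact more direct than the paper's detour through $\nabla g$.
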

\begin{prop}\label{AlmostThm}
Let $\Sigma$ be a radially parametrized, admissible hypersurface in $\erre^{n+1}$. Assume that the logarithmic radius $f$ satisfies estimate \eqref{ApproxFirst}, \eqref{ApproxSecond}, \eqref{ApproxThird}. Then, there exist a constant $C=C(n, \, p, \,  \Lambda, \,  D)>0$ such that the following estimate holds:
\begin{equation}\label{AlmostEq}
\norm{f - (v_f, \, \cdot)}_{W^{2, \, p}_\sigma(\esse^n)} \le C \coup*{\norm{\tRic}_{L^p_g(\Sigma)} + \sqrt{\epsilon}\norm{f}_{W^{2, \, p}_\sigma(\esse^n)}  }
\end{equation}
\end{prop}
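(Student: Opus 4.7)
The plan is to invoke Lemma \ref{ObataLike}, which reduces the claim to the estimate
\[
\|\Delta_\sigma f + n f\|_{L^p_\sigma(\esse^n)} \;\le\; C\bigl(\|\tRic\|_{L^p_g(\Sigma)} + \sqrt{\epsilon}\,\|f\|_{W^{2,p}_\sigma(\esse^n)}\bigr),
\]
since $\phi_{v_f} \in \ker(\Delta_\sigma + n)$. The idea is then to recognise $-(\Delta_\sigma f + n f)$ as the linearisation of the mean-curvature defect $H-n$ at the round sphere, and to bound $H-n$ through the scalar curvature via the Gauss equation and Proposition \ref{AverageScalarThm}.

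The first step is a Taylor expansion of the mean-curvature formula \eqref{MeanC}. A direct computation yields
\[
H - n \;=\; -(\Delta_\sigma f + n f) + \mathcal{N}(f),
\]
where $\mathcal{N}(f)$ is a sum of terms at least quadratic in $(f,\nabla f,\nabla^2 f)$: typical summands are $f\Delta_\sigma f$, $|\nabla f|^2$, $|\nabla f|\,|\nabla^2 f|$, $|\nabla f|^3$ and $f^2$. By the hypotheses \eqref{ApproxFirst}--\eqref{ApproxThird}, every such product in $L^p_\sigma$ admits a factor of size $\sqrt{\epsilon}$ in $L^\infty$ paired with an $L^p$-factor bounded by $\|f\|_{W^{2,p}_\sigma}$; for instance $\||\nabla f||\nabla^2 f|\|_{L^p_\sigma} \le \|\nabla f\|_\infty\|\nabla^2 f\|_{L^p_\sigma} \le C\sqrt{\epsilon}\,\|f\|_{W^{2,p}_\sigma}$, and analogously for the other terms. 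So $\|\mathcal{N}(f)\|_{L^p_\sigma} \le C\sqrt{\epsilon}\,\|f\|_{W^{2,p}_\sigma}$. Since \eqref{Volume} together with \eqref{ApproxFirst}--\eqref{ApproxSecond} makes $dV_g$ and $dV_\sigma$ uniformly comparable, the $L^p_g$ and $L^p_\sigma$ norms can be interchanged freely, and the task reduces to bounding $\|H-n\|_{L^p_g}$ by the desired right-hand side.

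For this I would exploit the Gauss equation \eqref{GaussEq}. Decomposing $A = \tfrac{H}{n}g + \mathring{A}$ with $\mathring{A}$ the $g$-traceless part of $A$, contraction yields $R = \tfrac{n-1}{n}H^2 - |\mathring{A}|^2$, hence
\[
(H-n)(H+n) \;=\; \tfrac{n}{n-1}\bigl(R - n(n-1)\bigr) + \tfrac{n}{n-1}\,|\mathring{A}|^2.
\]
By Proposition \ref{Convexity}, $H \ge 0$, so $H+n \ge n$ and pointwise $|H-n| \le \tfrac{1}{n-1}(|R-n(n-1)| + |\mathring{A}|^2)$. To bound $\|R-n(n-1)\|_{L^p_g}$ I split it as $(R-\overline R) + (\overline R - n(n-1))$: the oscillation is controlled by $C\|\tRic\|_{L^p_g}$ via Proposition \ref{AverageScalarThm}, and the average deviation by combining the Minkowski identities $\int_\Sigma H\langle x,\nu\rangle\,dV_g = n\Vol(\Sigma)$ and $\int_\Sigma R\langle x,\nu\rangle\,dV_g = (n-1)\int_\Sigma H\,dV_g$ with the normalisation $\Vol(\Sigma) = \Vol(\esse^n)$. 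This gives $\overline R - n(n-1) = \fint_\Sigma[(n-1)H + R](1-\langle x,\nu\rangle)\,dV_g$; the pointwise expansion $1-\langle x,\nu\rangle = -f + \tfrac12|\nabla f|^2 + O(\epsilon^2)$ together with the volume constraint $\int(e^{nf}\sqrt{1+|\nabla f|^2} - 1)\,dV_\sigma = 0$, which forces $n\int f + \tfrac12\int|\nabla f|^2$ to be of size $\sqrt{\epsilon}\|f\|_{W^{2,p}_\sigma}$, yields $|\overline R - n(n-1)| \le C\sqrt{\epsilon}\|f\|_{W^{2,p}_\sigma}$ up to absorbable lower-order terms.

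The main obstacle will be the quadratic remainder $\||\mathring{A}|^2\|_{L^p_g}$, because the hypotheses do not force $\|\nabla^2 f\|_\infty$ to be small and therefore $\|\mathring{A}\|_\infty$ is only bounded by $C(\Lambda)$, with no small prefactor. To close the estimate I would use the Gauss equation once more, this time in the refined algebraic form $\tRic = \tfrac{H(n-2)}{n}\mathring{A} - (\mathring{A}^2)^\circ$ (the $\circ$ denoting the $g$-traceless part), which produces the pointwise inequality $|\mathring{A}|\bigl(c - |\mathring{A}|\bigr) \le |\tRic|$ with $c = c(n,\Lambda) > 0$. A dichotomy splitting $\esse^n$ into $\{|\mathring{A}| \le c/2\}$ (where $|\mathring{A}| \le 2|\tRic|/c$, hence $|\mathring{A}|^2 \le (c/2)|\mathring{A}| \lesssim |\tRic|$) and its complement of small measure $\le (2/c)^p\|\mathring{A}\|_{L^p}^p$ (on which $|\mathring{A}|^2 \le C(\Lambda)^2$ is combined with the Chebyshev estimate), followed by a bootstrap on $\|\mathring{A}\|_{L^p_g}$ against $\|\tRic\|_{L^p_g}$, yields $\||\mathring{A}|^2\|_{L^p_g} \le C\|\tRic\|_{L^p_g} + C\sqrt{\epsilon}\|f\|_{W^{2,p}_\sigma}$. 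Assembling all the estimates and applying Lemma \ref{ObataLike} delivers the claim.
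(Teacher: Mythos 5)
Your overall architecture is sound and partly parallels the paper: the reduction to $\norm{\Delta_\sigma f + nf}_{L^p_\sigma}$ via Lemma \ref{ObataLike}, the linearisation of the geometric quantities in $f$ with $O_{\sqrt{\epsilon}}(\norm{f}_{2,p})$ errors, and the control of $\Rm - n(n-1)$ through the volume normalisation (the paper's Lemma \ref{SimpLemma} does this by direct expansion and a Bochner identity rather than Minkowski formulas, but both routes are fine). The genuine gap is in your treatment of the quadratic term $\norm{\abs{\mathring{A}}^2}_{L^p_g}$, which you correctly identify as the main obstacle but do not actually overcome. First, the pointwise inequality $\abs{\mathring{A}}\coup{c - \abs{\mathring{A}}} \le \abs{\tRic}$ with $c = c(n,\Lambda) > 0$ requires a uniform positive lower bound on $H$ (since $c$ is essentially $\tfrac{n-2}{n}\inf H$), and neither convexity (which only gives $H \ge 0$) nor the hypotheses \eqref{ApproxFirst}--\eqref{ApproxThird} (which do not make $\nabla^2 f$ pointwise small) supply one. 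Second, even granting such a $c$, the dichotomy does not close: on the bad set $\{\abs{\mathring{A}} > c/2\}$ the inequality is vacuous, Chebyshev only gives $\abs{\{\abs{\mathring{A}} > c/2\}} \le (2/c)^p \norm{\mathring{A}}_{L^p}^p$, and the resulting bound $\norm{\mathring{A}}_{L^p}^p \le (2/c)^p\norm{\tRic}_{L^p}^p + C(\Lambda)^p(2/c)^p\norm{\mathring{A}}_{L^p}^p$ has a non-small coefficient on the last term, so nothing can be absorbed. The pointwise algebra of $\tRic$ alone genuinely cannot force $\mathring{A}$ to be small: $\tRic = 0$ admits non-umbilic pointwise solutions, so some further structural input is indispensable.

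The paper supplies exactly that input through its polynomial comparability lemmas. Lemma \ref{LemmaPolyDue} shows that for a diagonalised $A$ the quantity $\abs{\Ric - (n-1)g}^2$ is comparable, with constants depending only on $n$ and $\Lambda$, to $\abs{A-g}^2\abs{A+g}^2$; combined with Proposition \ref{Convexity} ($A \ge 0$, hence $\abs{A+g} \ge \abs{g}$) this yields the pointwise bound $\abs{A-g} \le C\abs{\Ric - (n-1)g}$ with no bootstrap and no lower bound on $H$. Since you already control $\norm{\Ric - (n-1)g}_{L^p}$ by $\norm{\tRic}_{L^p} + \norm{R - n(n-1)}_{L^p}$, invoking that lemma would give $\norm{\mathring{A}}_{L^p} \le C\coup{\norm{\tRic}_{L^p} + \sqrt{\epsilon}\norm{f}_{2,p}}$ and then $\norm{\abs{\mathring{A}}^2}_{L^p} \le \norm{\mathring{A}}_\infty\norm{\mathring{A}}_{L^p} \le C(\Lambda)\norm{\mathring{A}}_{L^p}$, closing your argument; as a bonus, your trace-only route would then bypass the Weyl tensor and Proposition \ref{WeylThm} entirely, whereas the paper works with the full tensor $(A-g)\nomizu(A+g)$ and Lemma \ref{LemmaPolyTre}. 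As written, however, the proposal's substitute for this comparability step fails, and the estimate \eqref{AlmostEq} is not established.
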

We show how theorem \ref{MainThm} follows by these propositions. The proof we are giving is essentially copied from \cite[p. $4$-$5$]{Gioffre2016}, and shows how to optimise estimate \eqref{AlmostEq} by removing $v_f$.
\begin{proof}[Proof of theorem \ref{MainThm}]
Let $\epsilon>0$ be fixed for the moment. At the end of the argument we will choose it small enough. Let $\Sigma$ be  an admissible, radially parametrized hypersurface, and let $\delta$ be given so that the logarithmic radius $f$ satisfies inequalities \ref{ApproxFirst}, \ref{ApproxSecond}, \ref{ApproxThird} and \ref{AlmostEq}.
We notice that for every $c \in U$ we can define 
\begin{equation}
\psi_c \daA{\esse^n}{\Sigma - c },\ \psi_c(x) := \rho_c(x) \, x := e^{f_c(x)} \, x
\end{equation}
For every $c$ the mapping $\psi_c$ is an alternative radial parametrization for $\Sigma$, and it is a well defined diffeomorphism. We can also define:
\begin{equation}
\Phi \daA{U}{\erre^{n+1}},\ \Phi(c):= - (n+1) \fint_{\esse^n}   z f_c(z) \, dV_\sigma(z)
\end{equation}
Our idea is to find $c_0 \in U$ such that $\Phi(c_0)=0$. Then we are done, because for such $f_{c_0}$ we obtain the estimate 
\[
 \norm{f_{c_0}}_{W^{2, \, p}_\sigma(\esse^n)} \le C \coup*{ \norm{\tRic}_{L^p_g(\Sigma)} + \sqrt{\epsilon} \, \norm{f_{c_0}}_{W^{2, \, p}_\sigma(\esse^n)} }  
\]
Therefore if we set $\epsilon_0 = \frac{2}{C}$ we can find a $\delta_0$ given by proposition \ref{ApproxThm} such that the last term can be absorbed, namely
\[
\norm{\tRic}_{L^p_g(\Sigma)} + \norm{W}_{L^p_g(\Sigma)} \le \delta \Rightarrow \norm{f_{c_0}}_{W^{2, \, p}_\sigma(\esse^n)} \le C \norm{\tRic}_{L^p_g(\Sigma)} 
\]
This estimate proves theorem \ref{MainThm} with $c=c_0$.

Let us find such $c_0$. By the hypothesis on the logarithmic radius, we can easily find a radius $r=r(\Sigma)>0$ such that for every $c \in \D_r$ we have that $f_c$ still satisfies such estimates. We work with $H$ inside the disk $\D_r$. 
We start with the following simple consideration: for every $z \in \esse^n$ there exists $x_c=x_c(z)$ in $\esse^n$ so that
\[
\psi_c(z) = \psi(x_c) - c
\]
We expand this equality and find
\[
\rho_c(z) \, z = \rho(x_c) \, x_c - c 
\]
We take the absolute value and obtain that $\rho_c$ satisfies the equality:
\[
\rho_c(z) = \abs{\rho(x_c) \, x_c - c}
\]
while $x_c=x_c(z)$ satisfies the relation
\begin{equation}\label{xc}
x_c(z) = \frac{\rho_c(z) \, z + c}{\rho(x_c(z))}
\end{equation}
Using the $W^{1, \, \infty}$-smallness of $f$, we approximate $\rho$ and $\rho_c$, and find
\begin{equation}\label{xcApp}
x_c(z) = \coup{ z + c} \coup{1 + o_\epsilon(1)}
\end{equation}
We approximate $f_c$:
\begin{align*}
f_c(z) &= \frac{1}{2} \, \log \abs*{ e^{f(x_c) } x_c - c }^2 = \frac{1}{2} \log \coup*{ e^{2 f(x_c)} - 2 e^{f(x_c)} (x_c, \, c) + \abs{c}^2 }^2 \\
&= \frac{1}{2} \log \coup*{1 + \abs{c}^2 - 2(x_c(z), \, c) + o_\epsilon(1)} \\
&= \abs{c}^2 - 2(x_c(z), \, c) + o(\abs{c}) + o_\epsilon(1) =  - 2 (z, \, c) +  o(\abs{c}) + o_\epsilon(1)
\end{align*}
This allows us to write $\Phi$ as follows:
\begin{align*}
\Phi_i(c) 
&=  (n+1 )\fint_{\esse^n}  (x_c(z), \, c)  \, z_i \, dV_\sigma + o(\abs{c}) + o_\epsilon(1)   \\
&=  (n+1 )\fint_{\esse^n}  ((z+c)(1 + o_\epsilon(1)), \, c)  \, z_i \, dV_\sigma + o(\abs{c}) + o_\epsilon(1)    \\
&=   (n+1) \coup*{\int_\esse^n \abs{z_i}^2 \, dV_\sigma}c  + o(\abs{c}) + o_\epsilon(1) \\
&=    c  + o(\abs{c}) + o_\epsilon(1) 
\end{align*}
We are now able to show that $0$ is in the range of $\Phi$. We restrict $\Phi$ to $\D_r$ and finally we choose $\epsilon$ and $r$ so small that 
\[
\abs{\Phi(x) - x} < \frac{1}{2} \mbox{ for every } x \in \D_r
\]
Let us argue by contradiction: suppose that $0 \notin R(\Phi)$, then we can consider $\overline{\Phi}:= \frac{\Phi}{\abs{\cdot}} \daA{\esse^n}{\esse^n}$, and notice that 
\begin{equation}\label{Homotopic}
\abs{\overline{\Phi}(x) - x} < 2 \mbox{ for every } x \in \esse^n
\end{equation}
It is easy to see that if $c_n \to c_0$ then $f_{c_n} \to f_{c_0}$ pointwise and the family $\set{f_c}_{c \in \D_r}$ is equibounded. This proves that $\Phi$ and therefore also $\Phi$ are continuous. However, estimate \eqref{Homotopic} tells us that $\overline{\Phi}$ is homotopic to the identity; but at the same time, we obtain that $\overline{\Phi}$ is the restriction of a continuous map defined on the ball, hence it cannot be homotopic to the identity.
\end{proof}
The rest of the article will be devoted to proving the three propositions.
\section{Proof of the propositions}
\subsection{Proof of proposition \ref{AverageScalarThm}}
We prove theorem \ref{AverageScalarEq}. The idea of the proof is to study the differential Bianchi equation as a partial differential equation between the scalar curvature and the traceless Ricci tensor. As for the proof of theorem \ref{MainThm}, we essentially copy a technique used in \cite{Daniel} and then in \cite{Gioffre2016}, where the authors deal with a similar differential equation on hypersurfaces and on the sphere respectively.

Our starting point is the Bianchi identity \eqref{DiffBianEq}. Writing $\Ric = \tRic + \frac{1}{n} R g$ we obtain the differential relation 
\begin{equation}\label{DiffBianTraceless}
\nabla R = \frac{2n}{n-2} \divv \tRic
\end{equation}
We let the thesis follow from the following lemma.
\begin{lemma}\label{AnArmLemma}
Let $(M, \, g)$ be a closed, $n$-dimensional manifold in the Cheeger-Gromov class $\Ch_{\Lambda, \, V, \, D}$. Let
 $\gu \in C^\infty(M)$, $\gf \in \Gamma(T^*M \otimes T^*M)$, be given so that the following equation holds:
\[
\nabla \gu = \divv_g \gf 
\]
There exists $C=C(n, \, p, \, \Lambda, \, \Vol(M), \, D)$ such that the following estimate holds:
\begin{equation}\label{AnArm}
\norm{\gu - \overline{\gu}}_{L^p_g(M)} \le  C \norm{\gf}_{L^p_g(M)}
\end{equation}
where we have set 
\[
\overline{\gu}:= \fint_{\esse^n} \gu \, dV_g
\]
\end{lemma}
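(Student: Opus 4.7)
The plan is to establish the lemma via a duality argument that reduces it to the standard $W^{2, p'}$-regularity estimate for the Poisson equation on $(M, g)$, where $p' = p/(p-1)$ denotes the Hölder conjugate of $p$. By duality one has
\[
\norm{\gu - \overline{\gu}}_{L^p_g(M)} = \sup_\phi \int_M \gu \, \phi \, dV_g,
\]
the supremum being taken over all $\phi \in C^\infty(M)$ with $\int_M \phi \, dV_g = 0$ and $\norm{\phi}_{L^{p'}_g(M)} \le 1$. It therefore suffices to bound $\int_M \gu \, \phi \, dV_g$ by $C \, \norm{\gf}_{L^p_g(M)} \, \norm{\phi}_{L^{p'}_g(M)}$ for every such test function $\phi$.

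Given a mean-zero $\phi$, I would solve the Poisson problem $\Delta_g w = \phi$ normalised so that $\int_M w \, dV_g = 0$; this is possible by the Fredholm alternative, since $\phi$ is orthogonal to the kernel of $\Delta_g$, which consists of the constants. Classical $W^{2, p'}$-regularity for the Laplace–Beltrami operator on a closed Riemannian manifold then gives $\norm{w}_{W^{2, p'}_g(M)} \le C \, \norm{\phi}_{L^{p'}_g(M)}$. Two integrations by parts (no boundary terms, $M$ being closed), combined with the hypothesis $\nabla \gu = \divv_g \gf$, yield
\begin{align*}
\int_M \gu \, \phi \, dV_g &= \int_M \gu \, \Delta_g w \, dV_g = -\int_M \langle \nabla \gu, \nabla w \rangle \, dV_g \\
&= -\int_M \langle \divv_g \gf, \nabla w \rangle \, dV_g = \int_M \langle \gf, \nabla^2 w \rangle \, dV_g,
\end{align*}
and Hölder's inequality closes the bound:
\[
\abs*{ \int_M \langle \gf, \nabla^2 w \rangle \, dV_g } \le \norm{\gf}_{L^p_g(M)} \, \norm{\nabla^2 w}_{L^{p'}_g(M)} \le C \, \norm{\gf}_{L^p_g(M)} \, \norm{\phi}_{L^{p'}_g(M)}.
\]
Taking the supremum over $\phi$ then delivers \eqref{AnArm}.

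The main obstacle is showing that the constant in the scalar $W^{2, p'}$-regularity estimate depends only on $n, p, \Lambda, V, D$, i.e., that it is uniform on $\Ch_{\Lambda, V, D}$. I would derive this uniformity from tools already recalled in the preliminaries. Cheeger's Lemma \ref{InjControl} provides a lower bound $i_0 = i_0(n, \Lambda, V, D)$ on the injectivity radius; Theorem \ref{Harmonic} then yields harmonic coordinate charts of uniform radius $R_0$ on which $g - \delta$ is small in $C^{1, \alpha}$. On each such chart the standard Calderón–Zygmund $W^{2, p'}$-theory for second-order elliptic operators with $C^{1, \alpha}$ coefficients produces a local estimate, and a finite partition of unity (whose cardinality is controlled by $D/R_0$) patches these local bounds into a global one. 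Equivalently, the uniformity can be established by a compactness-contradiction argument based on Theorem \ref{CheegerCptness}: along a hypothetical sequence in $\Ch_{\Lambda, V, D}$ with constants tending to infinity one could extract a $C^{1, \alpha}$-limit $(M_\infty, g_\infty)$ for which the classical Poisson $W^{2, p'}$-estimate would fail, a contradiction.
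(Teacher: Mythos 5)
Your argument is correct, but it takes a genuinely different route from the paper. The paper works entirely on the primal side: it passes to harmonic coordinates on balls of a uniform radius $R_0$ (via Lemma \ref{InjControl} and Theorem \ref{Harmonic}), splits $\gu = \gv + \gw$ into solutions of two auxiliary Dirichlet problems on such a ball (one with data $\divv_\delta \divv_\delta \tilde{\gf}$, one with the divergence of the error terms), quotes a local $L^p$ estimate for each from the literature, globalizes by a covering lemma, and finally compares $\min_\lambda \norm{\gu - \lambda}_p$ with $\norm{\gu - \overline{\gu}}_p$. You instead dualize once and for all, reducing the whole statement to the global $W^{2,\,p'}$ estimate for the Poisson problem $\Delta_g w = \phi$ plus two integrations by parts; this is the natural $L^p$ extension of the $p=2$ argument of De Lellis--Topping, where the dual problem coincides with the original one. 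Your route is shorter and avoids both the two-problem decomposition and the local-to-global patching lemma, at the price of requiring full second-order global elliptic regularity with a constant uniform over $\Ch_{\Lambda,\,V,\,D}$, whereas the paper only needs first-order (divergence-data) local estimates. Two points you should make explicit if you write this up. First, the duality identity restricted to mean-zero test functions is an equality only up to a factor $2$, since the extremizer for $\gu - \overline{\gu}$ need not have zero mean; this is harmless. Second, patching the local Calder\'on--Zygmund estimates gives $\norm{\nabla^2 w}_{p'} \le C\coup{\norm{\phi}_{p'} + \norm{w}_{W^{1,\,p'}}}$, so you must still absorb the lower-order term via a Poincar\'e/spectral-gap bound $\norm{w}_{p'} \le C \norm{\Delta_g w}_{p'}$ for mean-zero $w$, with a constant again uniform on the class --- standard in bounded geometry, but a genuine step rather than a footnote.
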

\begin{proof}
Let us assume $\Vol_g(M)=V$ without loss of generality. We fix $\alpha \in (0, \, 1)$ and patch together Cheeger's lemma \ref{InjControl}, theorem \ref{Harmonic} on harmonic coordinates, and easily obtain a positive number $R_0$ with the following property. For every $x \in M$, harmonic coordinates $y \daA{B_{R_0}(x)}{\erre^n}$ are well defined, and the metric $g$ in these coordinates satisfies the estimate
\[
\norm{g}_{C^{1, \, \alpha}(B_{R_0}(x))} \le C(n, \, \alpha, \, \Lambda, \, V, \, D ) R_0^2
\]
We expand the term in divergence form, and obtain 
\begin{align*}
\divv_g \gf_k 
&= g^{ij} \coup*{ D_i \gf_{jk} + \Gamma^l_{ij} \, \gf_{lk} + \Gamma_{ik}^l \, \gf_{lj}  } 
=  g^{ij} \coup*{ D_i \gf_{jk}  + \Gamma_{ik}^l \, \gf_{lj}  } \\
&= D_i \coup*{g^{ij} \gf_{ik}} - D_i g^{ij} \gf_{jk}  + \Gamma_{ik}^l \, \gf_{lj} =: \divv_\delta \tilde{\gf}_k + O(\gf)_k
\end{align*}
where in the last equality we have set $\tilde{\gf}^i_k := g^{ij} \gf_{jk} $, and used $O(\gf)$ to denote a quantity that satisfies the estimate 
\[
\abs{O(\gf)} \le C \abs{\gf} \mbox{ for some } C
\]
We argue as in \cite{Gioffre2016} and write $\gu = \gv + \gw$, where $\gv$ and $\gw$ satisfy the conditions:
\[
\begin{cases}
\Delta_\delta \gv = \divv_\delta \divv_\delta \tilde{\gf} \\
\restr{\gv}{\partial B_{R_0}(x)} = \restr{\gu}{\partial B_{R_0}(x)}
\end{cases}
\]
and 
\[
\begin{cases}
\Delta_\delta \gw = \divv \coup*{ \, \gf[h] + O_\epsilon(\gf) \, } \\
\restr{\gw}{\B_{R_0}(x)} = 0
\end{cases}
\]
where $\Delta_\delta$ is the flat laplacian. The first system is studied in \cite[p. $12$-$16$]{Daniel} where the author proves the existence of a real number $\lambda $ such that the following estimate holds: 
\[
\norm{\gv - \lambda(x)}_{L^p_\delta(B_{\faktor{R_0}{4}}(x))} \le C \norm{\gf}_{L^p_\delta(B_{R_0}(x))}
\]
where $C=C(n, \, p, \, R_0)$. The second system is well known. In \cite[p. $80$-$81$]{Ambrosio} it is shown the inequality:
\[
\norm{\gw}_{L^p_\delta(B_R(x))} \le C \norm{\gf}_{L^p_\delta(B_R(x))} 
\]
where $C=C(n, \, p, \, R_0)$. Both the integral estimates are made with the (local) flat measure, but using the local nearness of $g$ to $\delta$ we can easily notice that the measure $dx$ and $dV_g$ are equivalent, and the equivalence constants depend only on $n$, $p$, $\Lambda$, $R_0$.
We patch together the two estimates and obtain that there exists $r_0 = \faktor{R_0}{4}$ such that for every $x$ the following estimate holds:
\begin{equation}
\norm{\gu - \lambda(x)}_{L^p_g(B_{r_0}(x))} \le  C \norm{\gf}_{L^p_g(M)}
\end{equation}
where $\lambda(x)$ is a number depending only on $x$. We make the estimate global, by using the following lemma, proved in \cite[p. $8$-$9$]{Gioffre2016}\footnote{The proof in \cite{Gioffre2016} is given just in the case $(M, \, g) = (\esse^n, \, \sigma)$ but its extension is trivial.}  
\begin{lemma}
Suppose $\gu \in C^\infty(M)$ has the following property. There is a radius $\rho$ such that for every $x \in M$ the local estimate is satisfied:
\begin{equation}\label{LocalDue}
\norm{\gu - \lambda(x) }_{L^p_g(B_{r}(x))} \le  \beta
\end{equation}
where $\lambda(x)$ is a real number depending on $x$, $r \le 2\rho$ and $\beta$ does not depend on $x$. Then $\gu$ satisfies the global estimate:
\[
\norm{\gu - \lambda }_{L^p_g(M)} \le C \beta
\]
where $\lambda \in \erre$ and $C = C(n, \, p, \, \rho)$ is a positive constant. 
\end{lemma}
We complete the proof by showing that
\begin{equation}
\norm{\gu - \overline{\gu}}_p \le 2 \min_\lambda \norm{\gu - \lambda}_p
\end{equation}
Indeed, let us assume that the ambient space has unit volume, and let $\lambda_0 \in \erre$ be given so that
\[
\norm{\gu - \lambda_0}_p \le 2 \min_\lambda \norm{\gu - \lambda}_p
\]
We easily conclude the thesis.
\begin{align*}
\norm{\gu - \overline{\gu}}_p 
&\le  \norm{\gu - \lambda_0}_p + \abs{\overline{\gu} - \lambda_0}^{\frac{1}{p}} \le \norm{\gu - \lambda_0}_p + \abs*{\fint \gu  - \lambda_0}^{\frac{1}{p}} \\
&\le \norm{\gu - \lambda_0}_p + \coup*{\fint \abs{\gu  - \lambda_0}}^{\frac{1}{p}} \le 2 \norm{\gu - \lambda_0}_p  \\
&\le 2 \min_\lambda \norm{\gu - \lambda}_p
 \end{align*}
\end{proof}
\subsection{Proof of proposition \ref{WeylThm}}
We deal with proposition \ref{WeylThm}. In this proof we introduce a new technique, which we use to reduce geometric problems to polynomial ones.  We briefly explain the idea, which is very simple: when we have a tensorial identity between symmetric tensors, we study it pointwise and use the spectral theorem to diagonalise the main quantities. Diagonalising gives us an equality for the eigenvalues of these tensor, and therefore the identity is reduced to the study of the zeros of a polynomial. We exhibit the main results in this section.
\begin{prop}[Ridigity]\label{Rigidity}
Let $\Sigma$ be a closed, convex hypersurface in $\erre^{n+1}$. Assume that at every point we have the equality
\begin{equation}\label{RigidityEq}
\Ric = (n-1)g
\end{equation}
Then $\Sigma$ is isometric to the round sphere.
\end{prop}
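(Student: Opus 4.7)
The plan is to translate the curvature hypothesis into a pointwise polynomial identity for the eigenvalues of the second fundamental form $A$ and then exploit the convexity sign to pin those eigenvalues down, reducing the statement to a direct application of the Nabelpunksatz (Theorem \ref{Nabel}).

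First I would use the Gauss equation \eqref{GaussEq}. Contracting with $g^{ik}$ gives the standard expression for the Ricci tensor of a hypersurface,
\[
\Ric_{jl} \;=\; g^{ik}\coup*{A_{ik}A_{jl}-A_{il}A_{jk}} \;=\; H\,A_{jl}-(A^2)_{jl},
\]
where $H=\tr_g A$ and $(A^2)_{jl}=g^{ik}A_{il}A_{jk}$. Hypothesis \eqref{RigidityEq} then becomes the pointwise symmetric $(0,2)$-tensor equation
\begin{equation*}
H\,A - A^2 \;=\; (n-1)\,g.
\end{equation*}

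Next I would diagonalize $A$ at an arbitrary point $x\in\Sigma$ — this is the ``new technique'' mentioned in the introduction to proposition \ref{WeylThm}. Since $\Sigma$ is convex, proposition \ref{Convexity} gives $A\ge 0$, so its eigenvalues $\lambda_1,\dots,\lambda_n$ are nonnegative reals. In an $A$-diagonal $g$-orthonormal frame the tensor equation reduces to the scalar equations
\[
H\lambda_i-\lambda_i^{\,2} \;=\; n-1,\qquad i=1,\dots,n,
\]
i.e.\ each $\lambda_i$ is a root of the same quadratic $P(t):=t^2-Ht+(n-1)$. Hence the spectrum of $A$ takes at most two distinct values $\lambda_+,\lambda_-$ at $x$, and Vieta's formulas yield
\[
\lambda_++\lambda_-=H,\qquad \lambda_+\lambda_-=n-1>0,
\]
so in particular both $\lambda_\pm$ are strictly positive whenever they occur.

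The heart of the argument — and the one place a small case analysis is needed — is to rule out $\lambda_+\neq\lambda_-$. Let $k\in\{0,1,\dots,n\}$ be the multiplicity of $\lambda_+$; then $H=k\lambda_++(n-k)\lambda_-$, which combined with $H=\lambda_++\lambda_-$ gives
\[
(k-1)\lambda_+ + (n-k-1)\lambda_- \;=\; 0.
\]
If $0<k<n$, the coefficients $k-1$ and $n-k-1$ are not both zero and not of opposite signs (for any choice with $n\ge 3$), so positivity of $\lambda_\pm$ forces one of them to vanish, contradicting $\lambda_+\lambda_-=n-1$. Hence $k\in\{0,n\}$, i.e.\ $A=\lambda\,g$ at $x$ for some $\lambda\ge 0$; plugging back, $(n-1)\lambda^2=n-1$ forces $\lambda=1$. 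Thus $A=g$ identically on $\Sigma$, and Theorem \ref{Nabel} concludes that $\Sigma$ is isometric to $\esse^n$.

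The only genuine subtlety I expect is the small-$k$ case analysis just above, since one must carefully use both the convexity sign $\lambda_\pm\ge 0$ and the fact that the product $\lambda_+\lambda_-=n-1$ is strictly positive in order to exclude the umbilic-with-two-eigenvalues configurations; everything else is either the classical Gauss formula or a direct citation of Proposition \ref{Convexity} and Theorem \ref{Nabel}.
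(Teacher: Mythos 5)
Your proposal is correct and follows essentially the same route as the paper: contract the Gauss equation to get $\Ric = HA - A^2$, diagonalize $A$ pointwise to obtain the system $Hx_i - x_i^2 = n-1$, show the eigenvalues take at most two values satisfying $\lambda_+ + \lambda_- = H$ and $\lambda_+\lambda_- = n-1$, derive $(k-1)\lambda_+ + (n-k-1)\lambda_- = 0$, and rule out two distinct values by a sign argument before invoking Theorem \ref{Nabel} — this is exactly the content of the paper's Lemma \ref{LemmaPolyRig} (the ``Zeros of $q$'' computation). The only cosmetic difference is that you feed convexity ($A \ge 0$) into the eigenvalue analysis from the start via Vieta, whereas the paper first classifies all real solutions as $\pm(1,\dots,1)$ and only then uses convexity to discard the negative one.
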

\begin{prop}[Stability]\label{Stability}
Let $\Sigma$ be a closed, convex hypersurface in $\erre{n+1}$. There exists a constant $C=C(n, \, p, \, \Lambda, \, D)$ such that 
\[
\norm{W}_{L^p_g(\Sigma)} \le C \coup*{ \norm{\tRic}_{L^p_g(\Sigma)}  + \abs{\Rm-  n(n-1) } }
\] 
\end{prop}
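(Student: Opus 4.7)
The plan is to follow the spectral/polynomial strategy announced at the start of Section 3: diagonalise the second fundamental form at each point of $\Sigma$ and reduce the tensorial inequality to a pointwise polynomial inequality in the principal curvatures. At a point $x\in\Sigma$, choose an orthonormal frame $\{e_1,\ldots,e_n\}$ of $T_x\Sigma$ diagonalising $A$ as $A(e_i,e_j)=\lambda_i\delta_{ij}$; by Proposition \ref{Convexity} and admissibility, $0\le\lambda_i\le\Lambda$. The Gauss equation (Theorem \ref{GaussThm}) gives $\Riem_{ijij}=\lambda_i\lambda_j$ as the only nontrivial Riemann components, so via the Riemann decomposition (Theorem \ref{RiemDecomp}) both $\tRic$ and $W$ are ``diagonal'' in this frame. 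Writing $\bar\lambda=H/n$ and $\mu_i=\lambda_i-\bar\lambda$ (so $\sum_i\mu_i=0$), a direct computation yields
\[
\tRic_{ii} = (n-2)\,\bar\lambda\,\mu_i - \mu_i^2 + \tfrac{1}{n}|\mu|^2,\qquad W_{ijij} = \mu_i\mu_j + \tfrac{\mu_i^2+\mu_j^2}{n-2} - \tfrac{|\mu|^2}{(n-1)(n-2)},
\]
together with $R = n(n-1)\bar\lambda^2 - |\mu|^2$, making every quantity in the statement an explicit polynomial in $\lambda_1,\ldots,\lambda_n$.

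The heart of the proof will be the pointwise algebraic inequality $|W|^2 \le C(n,\Lambda)\bigl(|\tRic|^2 + (R-n(n-1))^2\bigr)$ on the compact set $K:=[0,\Lambda]^n$. I would first locate the common zero set of the right-hand side inside $K$. The system $\tRic_{ii}=0$ forces each $\mu_i$ to solve the same quadratic $\mu^2 - (n-2)\bar\lambda\,\mu - |\mu|^2/n = 0$, so the $\mu_i$ take at most two distinct values; combined with $\sum_i\mu_i=0$ and the convexity constraint $\mu_i\ge-\bar\lambda$, a short combinatorial check shows only two configurations can occur, namely the umbilical one $\mu\equiv 0$ and a degenerate one in which a single $\mu_i=(n-1)\bar\lambda$ and all others equal $-\bar\lambda$. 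The latter satisfies $R=0$, so the extra constraint $R=n(n-1)$ rules it out and pins $\bar\lambda=1$; the right-hand side thus vanishes only at the round-sphere point $\lambda_1=\cdots=\lambda_n=1$, where $|W|^2$ also vanishes. A leading-order expansion there gives $|W|^2 = O(|\lambda-(1,\ldots,1)|^4)$ whereas the right-hand side is of order $|\lambda-(1,\ldots,1)|^2$, so the ratio $|W|^2/(|\tRic|^2 + (R-n(n-1))^2)$ remains bounded (in fact tends to $0$) near the zero; compactness of $K$ then yields the uniform constant $C(n,\Lambda)$.

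The remaining step is purely analytic: raising the pointwise inequality to the $p$-th power and using $(a^2+b^2)^{p/2}\lesssim a^p+b^p$ gives $\norm{W}_{L^p_g(\Sigma)}\le C\bigl(\norm{\tRic}_{L^p_g(\Sigma)} + \norm{R-n(n-1)}_{L^p_g(\Sigma)}\bigr)$. Splitting $R-n(n-1)=(R-\overline{R})+(\overline{R}-n(n-1))$, applying Proposition \ref{AverageScalarThm} to control $\norm{R-\overline{R}}_{L^p_g}\le C\norm{\tRic}_{L^p_g}$, and absorbing the constant summand using $\Vol_g(\Sigma)=\Vol_\sigma(\esse^n)$ from the admissibility condition \eqref{VolEq}, one arrives at the claimed estimate. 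The main obstacle is the \L{}ojasiewicz-type step inside the pointwise inequality: one must verify that the order of vanishing of $|W|^2$ at the round-sphere point matches (or exceeds) that of the right-hand side, otherwise only $|W|^{2\alpha}$ with some $\alpha<1$ appears on the left and does not integrate cleanly. The quadratic non-degeneracy of $\tRic$ in the $\mu$-direction when $\bar\lambda\approx 1$, together with convexity ruling out spurious zeros of $\tRic$ away from the umbilical locus, is precisely what secures the sharp exponent $\alpha=1$.
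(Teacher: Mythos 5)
Your proposal is correct and follows the same overall strategy as the paper's proof: diagonalise $A$ pointwise, reduce the tensorial estimate to a polynomial inequality in the principal curvatures on a compact set, and then integrate and invoke Proposition \ref{AverageScalarThm} to convert $\norm{R - n(n-1)}_{L^p}$ into $\norm{\tRic}_{L^p} + \abs{\Rm - n(n-1)}$. The execution of the pointwise step differs in a way worth recording. The paper writes $W = \tfrac{1}{2}(A-g)\nomizu(A+g) + O(\abs{\tRic}) + O(\abs{R-n(n-1)})$ and proves the comparison $\abs{(A-g)\nomizu(A+g)}^2 \le c\,\abs{\Ric-(n-1)g}^2$ by expanding both sides against the auxiliary polynomial $r(x)=\abs{D(x)-\delta}^2\abs{D(x)+\delta}^2$ near the two zeros $\pm(1,\dots,1)$ (Lemma \ref{LemmaPolyDue}). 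You instead bound $\abs{W}^2$ directly by $\abs{\tRic}^2+(R-n(n-1))^2$, and your decisive observation is that in the diagonal frame $W_{ijij}$ is a \emph{homogeneous quadratic} in the traceless part $\mu$ of the eigenvalues (all $\bar\lambda$-terms cancel), so $\abs{W}^2=O(\abs{\mu}^4)$ while $\abs{\tRic}^2\ge c\abs{\mu}^2$ near the umbilical point; this settles the order-of-vanishing issue you rightly flag as the main obstacle, and does so more transparently than the paper's expansion of the quotients $p/r$ and $q/r$. You also deploy convexity ($\lambda_i\ge 0$) earlier, restricting to $[0,\Lambda]^n$ and thereby discarding both the zero $-(1,\dots,1)$ and the degenerate $\tRic$-zeros with $R=0$, whereas the paper's lemma treats $\pm(1,\dots,1)$ by symmetry and only uses convexity later. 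Both routes are valid; yours is self-contained and cleaner at the zero set, while the paper's formulation of the comparison (mutual comparability of $p$, $q$, $r$) is reused afterwards in Lemma \ref{LemmaPolyTre}.
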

We prove the propositions.
\begin{proof}[Proof of \ref{Rigidity}]
We consider equality \eqref{RigidityEq}. Tracking the Gauss equation \eqref{GaussEq} we obtain 
\begin{equation}\label{GaussRicci}
\Ric_{ij} = H A_{ij} - A_i^k A_{kj} 
\end{equation}
We consider a point $p$ and study the equality at $p$. Using the spectral theorem, we are able to find local coordinates such that $g= \delta$ and $A=D(x)$ is diagonal with eigenvalues $\set{x_1, \, \dots x_n}$ at $p$. We can rewrite equality \eqref{RigidityEq} as 
\[
\tr D(x) \, D(x) - D(x)^2 = (n-1) \delta
\]
In terms of the eigenvalues of $A$ we infer the following system:
\begin{equation}\label{PolyRig}
\coup*{\sum_{i=1}^n x_i  } x_j - x_j^2 = n-1 \mbox{ for every } j
\end{equation}
The thesis follows from the following lemma.
\begin{lemma}\label{LemmaPolyRig}
The only solutions of system \eqref{PolyRig} are the vectors $(1, \, \dots \, 1)$ and $(- 1, \, \dots \, -1)$.
\end{lemma}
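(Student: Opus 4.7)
The plan is to reduce the system to an elementary problem about a single quadratic polynomial. Setting $S := \sum_{i=1}^n x_i$, the $j$-th equation rewrites as
\[
x_j^2 - S x_j + (n-1) = 0,
\]
so every component $x_j$ is a root of the \emph{same} quadratic $t^2 - S t + (n-1)$. Denoting its two (possibly equal) roots by $\alpha, \beta$, one has $\alpha + \beta = S$ and $\alpha \beta = n-1$, and each $x_j$ equals either $\alpha$ or $\beta$.

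Let $k \in \{0, 1, \dots, n\}$ be the number of indices with $x_j = \alpha$, so that $n-k$ indices have $x_j = \beta$. Summing over $j$ gives $S = k\alpha + (n-k)\beta$; combined with $\alpha + \beta = S$ this produces the compatibility identity
\[
(k-1)\alpha + (n-k-1)\beta = 0.
\]
I would then dispatch three cases. When $k \in \{0, n\}$ all components coincide, and the original equation reduces to $(n-1)\alpha^2 = n-1$, forcing $\alpha = \pm 1$ and yielding exactly the two claimed solutions $(1, \dots, 1)$ and $(-1, \dots, -1)$. When $k \in \{1, n-1\}$, the compatibility identity collapses to $(n-2)\beta = 0$ or $(n-2)\alpha = 0$; since $n \ge 3$ this forces one of the two roots to vanish, contradicting $\alpha\beta = n-1 > 0$. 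When $2 \le k \le n-2$, both coefficients $(k-1)$ and $(n-k-1)$ are strictly positive, so $\alpha$ and $\beta$ must have strictly opposite signs (or one of them vanishes), which again contradicts $\alpha\beta = n-1 > 0$.

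The main, and essentially only, nontrivial step is the initial observation that all $n$ equations reduce to the same quadratic in the unknown $x_j$: after this reduction the full system on $\erre^n$ collapses to at most two distinct values for the components, plus a single linear constraint on their distribution. The remainder is a short finite case check. The mildly delicate point is the case $k \in \{1, n-1\}$, which genuinely relies on the assumption $n \ge 3$ built into the standing notation of the paper; without it the argument would admit degenerate zero solutions.
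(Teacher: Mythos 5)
Your proof is correct and takes essentially the same route as the paper's argument (the ``Zeros of $q$'' computation in the final section): at most two distinct component values with sum $S$ and product $n-1$, the counting identity $(k-1)\alpha+(n-k-1)\beta=0$, and a sign contradiction against $\alpha\beta=n-1>0$. Your observation that every component is a root of the single quadratic $t^2-St+(n-1)$ yields the sum and product relations via Vieta a bit more cleanly than the paper's pairwise elimination, but the substance of the two proofs is identical.
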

The proof of lemma \ref{LemmaPolyRig} is in the last section. From this result and from the convexity of $\Sigma$ we obtain that necessarily $A=g$ everywhere on $\Sigma$. The thesis follows by theorem \ref{Nabel}.
\end{proof}
\begin{proof}[Proof of \ref{Stability}]
We recall the definition of Weyl tensor:
\[
W = \Riem - \frac{R}{2n(n-1)}  g \nomizu g + \frac{1}{n-2} \tRic \nomizu g
\]
Let us denote by $O_p(\tRic)$ a quantity that can be approximated by $\tRic$ in the $L^p$-norm. With this notation and proposition \ref{AverageScalarEq}, we write
\begin{align*}
W 
&= \Riem - \frac{\Rm}{2n(n-1)} g \nomizu g + O_p(\tRic) \\
&= \Riem - \frac{1}{2} g \nomizu g + O_p(\tRic) + O(\abs{\Rm - n(n-1)})
\end{align*}

Now we recall equation \ref{GaussEq}, and finally obtain the expression
\[
W = \frac{1}{2} (A-g) \nomizu (A+g) + O_p(\tRic) + \abs{\Rm - R_\sigma}
\]
which easily gives us the $L^p$-inequality
\begin{equation}\label{WeylIneq}
\norm{W}_{L^p_g(\Sigma)} \le C \coup*{ \norm{(A-g) \nomizu (A+g)}_{L^p_g(\Sigma)} + \norm{\tRic}_{L^p_g(\Sigma)} + \abs{\Rm - R_\sigma}}
\end{equation}
where $C$ basically depends on proposition \ref{AverageScalarEq}. Now we apply the same technique seen before, and study the quantity $(A - g) \nomizu (A + g)$ pointwise. Again, we fix $p \in \Sigma$ and consider coordinates such that $g= \delta$, $A=D(x)$ at $p$. We let the thesis follow by the following lemma. 
\begin{lemma}\label{LemmaPolyDue}
Define the polynomials
\begin{align}
p(x) &:= \abs{(D(x)-\delta) \nomizu (D(x)+\delta)}^2 \label{PolyNomizu} \\
q(x) &:= \abs{\Ric(x) - (n-1)\delta}^2 \label{PolyRicci}
\end{align}
where $\Ric(x)$ is the diagonal matrix given by \eqref{GaussRicci}.
There exist positive constants $c_0=c_0(n)$ and $c_1=c_1(n, \, \Lambda)$ such that 
\begin{equation}\label{Confront}
c_0 \le \frac{q(x)}{p(x)} \le c_1 \mbox{ in the ball } B_\Lambda
\end{equation}
\end{lemma}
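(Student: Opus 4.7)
The plan is to reduce the inequality to an elementary comparison of two non-negative polynomials that have the same zero locus, and then to analyse the ratio $q/p$ separately near those zeros and on the rest of the compact ball $\overline{B_\Lambda}$.

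First, I would write both polynomials explicitly in the diagonal coordinates. With $g=\delta$ and $A=D(x)=\mathrm{diag}(x_1,\ldots,x_n)$, the definition of the Kulkarni-Nomizu product applied to two diagonal 2-tensors shows that the only non-vanishing components of $(D-\delta)\nomizu(D+\delta)$ occur at indices of the form $(i,j,i,j)$ or $(i,j,j,i)$ with $i\neq j$ and equal, up to sign, to $2(x_i x_j - 1)$. A direct count then gives
\[
p(x) = 16\sum_{i<j} (x_i x_j - 1)^2,
\]
while the Gauss formula $\Ric_{ii}=x_i\sum_{k\neq i}x_k$ yields
\[
q(x)=\sum_{i=1}^n\Bigl(x_i\sum_{k\neq i}x_k - (n-1)\Bigr)^2.
\]
By Lemma~\ref{LemmaPolyRig}, $q(x)=0$ forces $x=\pm(1,\ldots,1)$; the system $x_i x_j = 1$ for all $i<j$ similarly forces all $x_i$ equal and of modulus one, so $p$ and $q$ share the same zero set.

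Second, I would analyse the ratio near each zero. Both polynomials are invariant under $x\mapsto -x$ (trivially for $p$, and for $q$ because $\Ric = HA - A^2$ is even in $A$), so it suffices to treat the zero at $x=(1,\ldots,1)$. Writing $x_i = 1 + \epsilon_i$ and $S=\sum_k \epsilon_k$, a direct Taylor expansion gives
\[
p(1+\epsilon) = 16\bigl((n-2)|\epsilon|^2 + S^2\bigr) + O(|\epsilon|^3),\qquad
q(1+\epsilon) = (n-2)^2|\epsilon|^2 + (3n-4)\,S^2 + O(|\epsilon|^3).
\]
For $n\ge 3$ both leading quadratic forms are positive definite on $\mathbb{R}^n$, so their ratio is pinched between the minimum and maximum of the corresponding generalized eigenvalues, both of which depend only on $n$. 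Hence, on a neighbourhood $U$ of $\{\pm(1,\ldots,1)\}$ whose size can be chosen depending only on $n$, one has $c_0(n) \le q/p \le c_1(n)$.

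Third, I would finish by compactness. On the compact set $\overline{B_\Lambda}\setminus U$ both $p$ and $q$ are continuous and strictly positive, so $q/p$ attains a finite positive maximum and minimum there; these depend on $n$ and $\Lambda$ but not on $x$. Combining with the previous step and taking the worse of the two pinches yields the claimed inequality, with $c_1=c_1(n,\Lambda)$ absorbing the $\Lambda$-dependent upper bound and $c_0=c_0(n)$ given by the Taylor bound near the zeros (the residual compact region only \emph{improves} the lower bound, so the $\Lambda$-free constant is admissible). The main obstacle is precisely the verification in step two that the two leading quadratic forms are non-degenerate; this is where the rigidity statement (Lemma~\ref{LemmaPolyRig}) enters structurally, since the existence of a common degenerate direction would contradict the fact that both $p$ and $q$ vanish to order exactly two at isolated points.
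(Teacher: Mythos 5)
Your argument is correct and rests on the same analytic core as the paper's: identify the common zero locus $\{\pm(1,\dots,1)\}$, Taylor-expand both polynomials there to exhibit comparable positive-definite leading quadratic forms (your expansions $16\bigl((n-2)\abs{\epsilon}^2+S^2\bigr)$ and $(n-2)^2\abs{\epsilon}^2+(3n-4)S^2$ are exactly the ones appearing in the paper), and use compactness of $\overline{B_\Lambda}$ away from the zeros. You do, however, streamline two things. First, the paper compares $p$ and $q$ each to an auxiliary polynomial $r(x)=\abs{D(x)-\delta}^2\abs{D(x)+\delta}^2$ and deduces \eqref{Confront} by transitivity; you compare $q/p$ directly, which is cleaner and costs nothing (the auxiliary $r$ is genuinely needed only for Lemma \ref{LemmaPolyTre}). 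Second, and more substantially, your closed formula $p(x)=16\sum_{i<j}(x_ix_j-1)^2$ makes the zero set of $p$ evident for $n\ge 3$ (all products $x_ix_j$ equal to $1$ forces all coordinates equal with square one), whereas the paper determines the zeros of $p$ through a full page of critical-point analysis of $Dp=0$. This is a real simplification and worth recording. Your last remark, that Lemma \ref{LemmaPolyRig} is what guarantees non-degeneracy of the leading forms, is not needed: positive definiteness is read off directly from the explicit forms $\alpha\abs{\epsilon}^2+\beta S^2$ with $\alpha>0$, $\beta\ge 0$.

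One point is asserted rather than proved: you claim the lower bound $c_0$ can be taken to depend only on $n$ because ``the residual compact region only improves the lower bound.'' There is no a priori reason the minimum of $q/p$ over $\overline{B_\Lambda}\setminus U$ should exceed the constant obtained from the Taylor expansion near the zeros, so as written your step three only yields $c_0=c_0(n,\Lambda)$. (The paper's own argument has the identical issue: its global lower bounds $c_0\le p/r$, $c_2\le q/r$ are only established near the zeros, and indeed $p/r\to 0$ along rays such as $x=(t,0,\dots,0)$, so these bounds cannot hold uniformly outside a bounded set.) This is harmless for the application, since the constant in Proposition \ref{Stability} is allowed to depend on $\Lambda$ anyway, but you should either prove the $\Lambda$-free claim or state the lower bound as $c_0(n,\Lambda)$.
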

Using lemma \ref{LemmaPolyDue} we find the thesis. Indeed, by \eqref{Confront} we obtain
\[
\abs{(A-g) \nomizu (A+g)} \le c \abs{\Ric - (n-1) g} \mbox{ at every point }
\]
and we can perform the following estimate.
\begin{align*}
\norm{(A-g) \nomizu (A+g)}_p
&\le c \norm{\Ric - (n-1) g}_p \le c\coup*{\norm{\tRic}_p + \norm{R - n(n-1)}_p} \\
&\le C \coup*{ \norm{\tRic}_p + \norm{R - \Rm}_p + \abs{\Rm - R_\sigma} } \\
&\le C \coup*{ \norm{\tRic}_p  + \abs{\Rm - R_\sigma} }
\end{align*}
Plugging this inequality into \eqref{WeylIneq} we obtain the desired estimate. 
\end{proof}
\subsection{Proof of proposition \ref{ApproxThm}}
Here we prove the approximation theorem \ref{ApproxThm}. The idea is to argue by contradiction, and then apply the Cheeger-Gromov compactness theorem to a proper sequence of radially parametrized hypersurfaces.
\begin{proof}
The thesis follows from the following claim: for every $\epsilon$ and $\alpha$ there exists $\delta$ with the following property. If $(\Sigma, \, g)$ satisfies the assumptions of theorem \ref{ApproxThm} and the inequality
\[
\norm{\tRic}_{p} + \norm{W}_{p}  \le \delta 
\]
then the pull-back metric $g$ on the sphere satisfies the inequality
\begin{equation}\label{ApproxMetric}
\norm{g - \sigma}_{C^{1, \, \alpha}} \le \epsilon
\end{equation}
We assume \eqref{ApproxMetric} and prove proposition \ref{ApproxThm}. Indeed, let us recall expression \eqref{g} for the metric in term of the radius $\rho$:
\[
g_{ij} = e^{2 f} \coup*{\sigma_{ij} + \nabla_i f \nabla_j f} = \rho^2 \sigma_{ij} + \nabla_i \rho \nabla_j \rho
\]
Therefore we obtain a nearness expression for $\rho$: 
\begin{equation}\label{ApproxRho}
\norm{ (\rho^2 - 1) \sigma + \nabla \rho \otimes \nabla \rho }_{C^{1, \, \alpha} } \le \epsilon
\end{equation}
Let $x_M$ and $x_m$ be two points such that $\rho(x_M) = \max \rho$, $\rho(x_m) = \min \rho >0$. We know from \ref{ApproxRho} that 
\[
\abs{(\rho^2 - 1) \sigma + \nabla \rho \otimes \nabla \rho} \le \epsilon
\]
Evaluating at $x_M$ and $x_m$, we obtain
\[
\coup{ \rho(x_M) - 1}^2, \, \coup{\rho(x_m) - 1}^2 \le \epsilon
\]
because $\nabla \rho = 0$ at $x_m$ and $x_M$. Since $\rho$ is positive, we consider the simple inequality $1 + \rho > 1$ and find a $L^\infty$-inequality for the radius.
\[
\abs{\rho(x_M) - 1}, \, \abs{\rho(x_m) - 1} \le \epsilon \Rightarrow \norm{\rho - 1 } \le \epsilon
\]
From this we easily infer a $L^\infty$-inequality for the differential of the radius.
\[
\norm{\nabla \rho \otimes \nabla \rho}_\infty = \norm{\nabla \rho}^2_\infty \le C \epsilon
\]
We transpose this inequality for the logarithmic radius $f$ and obtain inequalities \eqref{ApproxFirst} and \eqref{ApproxSecond}. Inequality \eqref{ApproxThird} follows by considering the estimate for the first derivatives. Let us argue with the logarithmic radius. We have
\begin{equation}\label{DerivativeG}
\nabla g =  \nabla f \otimes g + e^{2f} \coup*{\nabla^2 f \otimes \nabla f + \nabla f \otimes \nabla^2 f}
\end{equation}
Inequality \ref{ApproxMetric} gives us
\[
\norm{\nabla g}_\infty \le \epsilon
\]
From this inequality and expression \eqref{DerivativeG} we infer
\[
\abs{\nabla^2 f \otimes \nabla f + \nabla f \otimes \nabla^2 f}^2 = 2 \abs{\nabla^2 f} \, \abs{\nabla f} + 2 \abs{\nabla f[\nabla f]}^2 \le C \epsilon
\]
and we obtain \eqref{ApproxThird}.

We prove the claim. Let us argue by contradiction. Assume there exists a sequence $\Set{\Sigma_k}_{k \in \enne}$ of radially parametrized, admissible hypersurfaces such that 
\[
 \lim_k \norm{\tRic}_p + \norm{W}_p = 0
\]
but the pull-back metrics $g^k$ on the sphere satisfy
\[
\norm{g^k- \sigma}_{C^{1, \, \alpha}} \ge \epsilon_0
\]
We recall that a surface is radially parametrized if $\Sigma=\psi(\esse^n)$
\[
\psi \daA{\esse^n}{\Sigma_k},\ \psi(x)= e^{f(x)} \, x 
\]
and it is admissible if 
\begin{align*}
\norm{A}_\infty &\le \Lambda \\
\Vol_n(\Sigma) &= \Vol_n(\esse^n) \\ 
 \diam \Sigma &\le D 
\end{align*}
Let us consider the sequence $\set{(\esse^n, \, g^k)}_{k \in \enne}$. The sequence is contained in $\Ch_{\Lambda, \, V, \, D}$. By the compactness theorem \ref{CheegerCptness}, we can assume that up to a subsequence $g^k$ converge to a metric $g$ in $C^{1, \, \alpha}$. We show that necessarily $g=\sigma$, and this is a contradiction.
Firstly, we show that the limit metric $g$ must be Ricci flat. The proof is strongly similar to the one made in \cite[p. $192$-$193$]{PetersenCollection}, so we just give a sketch of it and leave all the nasty details to it. We fix a point $x \in \esse^n$ and consider $y^k$ harmonic coordinates for $g^k$ near $x$: as shown before, we can easily find a radius $R_0$ such that the coordinates $y^k$ are well defined in $B_{R_0}(x)$ for every $k$. We recall the expression \eqref{RicciHarm} of the Ricci tensor:
 \[
 -\frac{1}{2} \Delta_{g^k} g^k_{ij} + Q_{ij}(g^k, \, D g^k) = \Ric^{g^k}_{ij}
 \]
Passing to the limit\footnote{Notice that we are subtly implying that also $y^k \to y$ harmonic system for $g$. This claim is true and proved in \cite{PetersenCollection}.}, we obtain the following, limit equation
\[
 -\frac{1}{2} \Delta_{g} g_{ij} + Q_{ij}(g, \, D g) = \Ric^{g}_{ij} 
\]
which holds in the sense of distribution. We show that $\Ric^g=cg$. Firstly, we track the Gauss equation \eqref{GaussEq}  twice and obtain the expression for the scalar curvature of $\Sigma$:
\[
R = H^2 - \abs{A}^2
\]
Since every $\Sigma_k$ is admissible and convex, by proposition \ref{Convexity} we obtain the the scalar curvatures $\set{R^{g^k}}_{k \in \enne}$ satisfy 
\[
0 \le R^{g^{k}} \le n^2 \Lambda^2
\]
Therefore, up to extract another subsequence, we can assume the existence of a non-negative $c$ such that $\overline{R^{g^k}}  \longrightarrow c$. Plugging these result into the distributional equation, we find that the limit metric satisfies 
\begin{equation}\label{LimitEq}
 -\frac{1}{2} \Delta_{g} g_{ij} + Q_{ij}(g, \, D g) = c g_{ij} \mbox{ for every } i, \, j
\end{equation}
Therefore $g$ is analytic and Einstein, with $\Ric^g=cg$. Moreover, we can easily see that the Weyl tensor $W_{g^k}$ weakly converge to $W$, namely
\[
\int_M ( W_{g^k}, \, T) \, dV_{g^k} \longrightarrow \int_M ( W_{g}, \, T) \, dV_g \mbox{ for any $4$-covariant tensor } T
\]
This result shows us that the limit metric $g$ has also null Weyl tensor. Indeed
\[
0 \le \norm{W_g}_{L^p_g(\esse^n)} \le \liminf_k \norm{W_{g^k}}_{L^p_{g^k}(\esse^n)} = 0
\]
Equation \eqref{RiemDecompEq} ensures that an Einstein metric with null Weyl tensor has constant sectional curvature. Since the sphere cannot have a globally flat metric, we already know that the Riemann tensor is positive definite, namely
\[
\Riem^g = \frac{c}{2n(n-1)} g \nomizu g, \mbox{ with } c>0
\]
Therefore $g=\mu \sigma$ for some $\mu > 0$. We conclude the proof by showing that $\mu =1$. This is ensured by the volume condition. Indeed, we have
\[
\Vol_n(\esse^n) = \Vol_{g^k}(\esse^n) \longrightarrow \Vol_g(\esse^n)
\]
However, we can also find
\[
g=\mu \sigma \Rightarrow \sqrt{\det g} =  \mu \sqrt{\sigma} \Rightarrow \Vol_n(\Sigma) = \mu \Vol_n(\esse^n)
\]
Necessarily we discover $\mu=1$, hence the contradiction.
\end{proof}
\subsection{Proof of proposition \ref{AlmostThm}}
This is the last step. Again, we follow a strategy outlined in \cite{Gioffre2016}: we write the linearised main quantities, and then obtain approximated formulas. Using again a particular reduction to eigenvalues, we reduce to a well known case and solve it.
\begin{proof}
We recall that we are considering a convex, closed hypersurface $\Sigma$ which admits a radial parametrization 
\[
\psi \daA{\esse^n}{\Sigma}\ \psi(x)=e^{f(x)} \, x
\]
whose logarithmic radius $f$ satisfies estimate \eqref{ApproxFirst}, \eqref{ApproxSecond}, \eqref{ApproxThird}.
The starting point of our analysis is the following equation, obtained by patching equations \eqref{GaussEq} and \eqref{RiemDecompEq}:
\begin{equation}\label{StartingEq}
\frac{1}{2} A \nomizu A - \frac{\Rm}{2n(n-1)} g \nomizu g = \frac{R - \Rm}{2n(n-1)} g \nomizu g + \frac{1}{n-2} \tRic \nomizu g + W
\end{equation}
From equation \eqref{StartingEq} and theorem \ref{AverageScalarThm} we easily infer the estimate
\[
\norm*{A \nomizu A - \frac{\Rm}{n(n-1)} g \nomizu g }_{L^p_g(\Sigma)} \le C \coup*{ \norm{\tRic}_{L^p_g(\Sigma)} + \norm{W}_{L^p_g(\Sigma)} }
\]
where as usual $C=C(n, \, p, \, \Lambda, \, D)$. 
We use theorem \ref{ApproxThm} in order to simplify the left hand side.
In particular, we prove the following lemma 
\begin{lemma}\label{SimpLemma}
Under the hypothesis of theorem \ref{AlmostThm}, the average of the scalar curvature can be approximated as follows: 
\begin{equation}\label{SimpleMean}
\Rm = n(n-1) + O_{\sqrt{\epsilon}}(\norm{f}_{W^{2, \, p}_\sigma(\esse^n)})
\end{equation}
\end{lemma}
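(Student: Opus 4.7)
The plan is to write $\Rm - n(n-1) = V^{-1}\int_\Sigma(R-n(n-1))\,dV_g$ with $V = \Vol_n(\esse^n)$, and to control the integral via two Minkowski-type identities together with the volume constraint. Starting from $\nabla^2_{\!\Sigma}(|\psi|^2/2) = g - \langle\psi,\nu\rangle A$, tracing and integrating gives $\int_\Sigma H\langle\psi,\nu\rangle\,dV_g = nV$, while contracting with $A^{ij}$, integrating by parts and using the Codazzi equation $\nabla_i A^{ij} = \nabla^j H$ produces $\int_\Sigma R\,\langle\psi,\nu\rangle\,dV_g = (n-1)\int_\Sigma H\,dV_g$. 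Combining these gives
\[
\int_\Sigma R\,dV_g - n(n-1)V \;=\; \int_\Sigma \bigl[R + (n-1)H\bigr]\bigl(1 - \langle\psi,\nu\rangle\bigr)\,dV_g.
\]

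With the radial parametrization $\langle\psi,\nu\rangle = e^f/\sqrt{1+|\nabla f|^2}$, the smallness bounds \eqref{ApproxFirst}--\eqref{ApproxSecond} allow the Taylor expansion $1 - \langle\psi,\nu\rangle = -f + \tfrac{1}{2}|\nabla f|^2 + E$ with $|E|\le C\epsilon(|f|+|\nabla f|^2)$. Since $R$ and $H$ are $L^\infty$-bounded by admissibility, the quadratic and remainder contributions integrate to $O(\|\nabla f\|_2^2 + \epsilon\|f\|_1)$, which is $O_{\sqrt\epsilon}(\|f\|_{W^{2,p}_\sigma(\esse^n)})$ using $\|\nabla f\|_2^2 \le \|\nabla f\|_\infty\|\nabla f\|_1$ and Hölder. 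The linear piece $-\int[R+(n-1)H]f\,dV_g$ is split as $-2n(n-1)\int f\,dV_g$ plus cross terms: the volume constraint $\int e^{nf}\sqrt{1+|\nabla f|^2}\,dV_\sigma = V$, Taylor-expanded to quadratic order, forces $n\!\int f\,dV_\sigma = -\tfrac12\|\nabla f\|_2^2 + O\bigl(\epsilon(\|f\|_1+\|\nabla f\|_2^2)\bigr)$, whence $|\int f\,dV_g| \le C\sqrt\epsilon\,\|f\|_{W^{2,p}_\sigma(\esse^n)}$ (the change of measure from $dV_\sigma$ to $dV_g$ only adds an $O(\epsilon)$ factor).

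The remaining cross terms $\int(H-n)f\,dV_g$ and $\int(R-n(n-1))f\,dV_g$ are handled via the linearization $H - n = -(\Delta_\sigma f + nf) + R_H$, derived from Taylor-expanding \eqref{MeanC}, whose remainder satisfies $\|R_H\|_p \le C\sqrt\epsilon\,\|f\|_{W^{2,p}_\sigma(\esse^n)}$ by \eqref{ApproxFirst}--\eqref{ApproxThird}. Integration by parts on $\esse^n$ then gives $-\int(\Delta_\sigma f+nf)f\,dV_\sigma = \|\nabla f\|_2^2 - n\|f\|_2^2 = O_{\sqrt\epsilon}(\|f\|_{W^{2,p}_\sigma(\esse^n)})$, and for the scalar-curvature piece one uses the pointwise identity $R - n(n-1) = 2(n-1)(H-n) + (H-n)^2 - |A-g|^2$ obtained by tracing Gauss \eqref{GaussEq}, whose quadratic terms contribute $\|f\|_\infty\cdot O(\|f\|_{W^{2,p}_\sigma(\esse^n)}) = O(\epsilon\,\|f\|_{W^{2,p}_\sigma(\esse^n)})$.

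The main obstacle will be the bookkeeping needed to extract a factor $\sqrt\epsilon$ (rather than merely $\epsilon$, or nothing) from every error term, systematically using \eqref{ApproxFirst}--\eqref{ApproxThird} in place of the cruder admissibility bounds. In particular, the cross contribution $\int R_H\cdot f\,dV_g$ arising inside $\int(H-n)f\,dV_g$ produces $C\sqrt\epsilon\,\|f\|_{W^{2,p}_\sigma(\esse^n)}^2$, which must be absorbed into $C\sqrt\epsilon\,\|f\|_{W^{2,p}_\sigma(\esse^n)}$ via the a priori bound $\|f\|_{W^{2,p}_\sigma(\esse^n)} \le C(n,p,\Lambda,D)$ that follows from admissibility and Proposition~\ref{ApproxThm}.
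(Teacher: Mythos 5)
Your proof is correct, but it follows a genuinely different route from the paper's. The paper traces the Gauss equation twice to get $R = H^2 - \abs{A}^2$, Taylor-expands $H^2$, $\abs{A}^2$ and the volume density in $f$, and integrates; the crucial point there is that the quadratic second-order term $(\Delta_\sigma f)^2 - \abs{\nabla^2 f}^2$ is \emph{not} pointwise small (second derivatives of $f$ are merely bounded), and only its integral collapses, via the commutation/Bochner identity, to $(n-1)\int \abs{\nabla f}^2 = O_{\sqrt\epsilon}(\norm{\nabla f}_p)$. Your Minkowski--Hsiung reduction $\int_\Sigma R\,dV_g - n(n-1)V = \int_\Sigma [R+(n-1)H](1-\langle\psi,\nu\rangle)\,dV_g$ sidesteps this entirely: the weight $1-\langle\psi,\nu\rangle = -f + \tfrac12\abs{\nabla f}^2 + O(\epsilon(\abs{f}+\abs{\nabla f}^2))$ contains no second derivatives of $f$, so no Bochner cancellation is ever needed, and the only genuinely delicate term is $\int f$, which both proofs must (and do) kill with the volume normalization $\Vol_n(\Sigma)=\Vol_n(\esse^n)$. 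The price you pay is the two integral identities (both standard for closed hypersurfaces of $\erre^{n+1}$, and your derivation via $\nabla^2(\abs{\psi}^2/2)=g-\langle\psi,\nu\rangle A$ and Codazzi is correct) plus some bookkeeping on the cross terms $\int(H-n)f$ and $\int(R-n(n-1))f$; note that these can be dispatched even more cheaply than you do, since $\norm{f}_\infty\le C\epsilon$ together with $\norm{H-n}_1, \norm{R-n(n-1)}_1 \le C\norm{f}_{W^{2,p}_\sigma}$ (from the linearizations you already invoke) gives a factor $\epsilon$ outright, without integrating by parts or appealing to the a priori bound on $\norm{f}_{W^{2,p}_\sigma}$. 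Your approach buys a structurally cleaner argument in which every error term is manifestly pointwise small; the paper's is more self-contained, relying only on the radial-graph computations of Lemma \ref{Computations} and a standard integration by parts.
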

We show how the thesis follows by lemma \ref{SimpLemma}, and then prove it.  Firstly, we use equation \eqref{SimpleMean} to improve proposition \ref{Stability} and obtain
\begin{equation}\label{StabilityImproved}
\norm{W}_{L^p_g(\Sigma)} \le C \coup*{ \norm{\tRic}_{L^p_g(\Sigma)} + \sqrt{\epsilon} \norm{f}_{W^{2, \, p}_\sigma(\esse^n)} }
\end{equation}
From \eqref{SimpleMean} and \eqref{StabilityImproved} we obtain the following estimate:
\begin{equation}\label{AlmostDone}
\norm*{(A - g) \nomizu (A + g) }_{L^p_g(\Sigma)} \le C \coup*{ \norm{\tRic}_{L^p_g(\Sigma)} + \sqrt{\epsilon} \norm{f}_{W^{2, \, p}_\sigma(\esse^n)}  }
\end{equation}
Again, we study $(A - g) \nomizu (A + g) $  pointwise. Given $p \in M$ we consider coordinates such that $g= \delta$ and $A=D(x)$ at $p$. We consider the polynomial problem associated to it and prove the following lemma.
\begin{lemma}\label{LemmaPolyTre}
Let $p=p(x)$ be defined as in \eqref{PolyNomizu}. We set $r=r(x)$ as 
\begin{equation}\label{PolyResto}
r(x):= \abs{D(x) - \delta}^2  \, \abs{D(x) + \delta}^2
\end{equation}
There exist $c_2=c_2(n)$ and $c_3=c_3(n, \, \Lambda)$ such that 
\[
c_2 \le \frac{p(x)}{r(x)} \le c_3
\]
\end{lemma}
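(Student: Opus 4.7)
The plan is to reduce this tensorial inequality to a polynomial inequality in the eigenvalues of $A$, in the same spirit as lemmas \ref{LemmaPolyRig} and \ref{LemmaPolyDue}. Fix a point and pick coordinates in which $g=\delta$ and $A=D(x)=\mathrm{diag}(x_1,\dots,x_n)$, so that $D(x)\pm\delta$ are both diagonal. A direct application of the definition \eqref{Nomizu} shows that the only nonzero components of $T:=(D(x)-\delta)\nomizu(D(x)+\delta)$ have $\{k,l\}=\{i,j\}$ with $i\neq j$, and
$T_{ijij}=(x_i-1)(x_j+1)+(x_j-1)(x_i+1)=2(x_ix_j-1)$. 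Collecting squared components via the Riemann-type symmetries yields the explicit formulas
\[
p(x)=16\sum_{i<j}(x_ix_j-1)^2,\qquad r(x)=\Bigl(\sum_i(x_i-1)^2\Bigr)\Bigl(\sum_j(x_j+1)^2\Bigr).
\]

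For the upper bound $p\le c_3 r$ I would use the polarisation identity
\[
x_ix_j-1=\tfrac{1}{2}\bigl[(x_i-1)(x_j+1)+(x_i+1)(x_j-1)\bigr]
\]
together with the elementary inequality $(u+v)^2\le 2(u^2+v^2)$ to obtain $(x_ix_j-1)^2\le\frac{1}{2}[(x_i-1)^2(x_j+1)^2+(x_i+1)^2(x_j-1)^2]$. Summing over $i<j$ and symmetrising, the right-hand side is at most $\frac{1}{2}\sum_{i\neq j}(x_i-1)^2(x_j+1)^2\le\frac{1}{2} r(x)$, and so $p(x)\le 8\,r(x)$ with a purely numerical constant.

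For the lower bound $p\ge c_2 r$ a purely algebraic estimate does not suffice and I would argue by compactness. First I would identify the common zero set: within the convex region $\{x_i\ge 0\}$ imposed by proposition \ref{Convexity}, $r(x)=0$ only at $x=(1,\dots,1)$, while $p(x)=0$ means $x_ix_j=1$ for every $i<j$, which for $n\ge 3$ forces (exactly as in lemma \ref{LemmaPolyRig}) $x=\pm(1,\dots,1)$, and convexity selects the plus sign. Next I would verify the matching order of vanishing: writing $x_i=1+\epsilon_i$, one has $x_ix_j-1=\epsilon_i+\epsilon_j+O(|\epsilon|^2)$, hence
\[
p(x)=16\Bigl[(n-2)|\epsilon|^2+\Bigl(\sum_i\epsilon_i\Bigr)^2\Bigr]+O(|\epsilon|^3),\qquad r(x)=4n|\epsilon|^2+O(|\epsilon|^3).
\]
For $n\ge 3$ the leading quadratic form of $p$ is coercive in $\epsilon$, so $p/r$ extends to a strictly positive continuous function at $(1,\dots,1)$; away from that point the ratio is manifestly positive and continuous. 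Compactness of the admissible region $\{0\le x_i\le\Lambda\}$ then forces $p/r$ to attain a positive minimum $c_2$.

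The main obstacle is the lower bound, specifically matching the orders of vanishing at the equilibrium $(1,\dots,1)$: this is precisely where the hypothesis $n\ge 3$ enters, since for $n=2$ the leading term $(n-2)|\epsilon|^2$ disappears and $p$ would vanish to higher order than $r$ along the direction $\sum_i\epsilon_i=0$. The compactness step is also essential, because as soon as $\Lambda$ is allowed to grow the ratio degenerates, for instance along $x=(M,1/M,1,\dots,1)$ with $M\to\infty$ one computes $p/r=O(1/M^2)$; hence the bound $\|A\|_\infty\le\Lambda$ is used in an unavoidable way through the compactness argument.
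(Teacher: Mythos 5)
Your proof is correct, and it departs from the paper's in one substantive way that is worth recording. For the lower bound you follow essentially the same route as the paper: identify the common zeros of $p$ and $r$, expand around $(1,\dots,1)$, and observe that the leading quadratic forms $16\bigl[(n-2)\abs{y}^2+(\sum_i y_i)^2\bigr]$ and $4n\abs{y}^2$ are comparable for $n\ge 3$ (your forms agree with the paper's up to an overall numerical factor the paper drops), then finish by compactness on the admissible region away from the singular point; the only cosmetic imprecision is that $p/r$ does not literally extend continuously to $(1,\dots,1)$ — the limit is direction-dependent — but the two-sided bound on the quadratic forms is all that is needed, and if one does not restrict to the convex cone $\set{x_i\ge 0}$ the point $-(1,\dots,1)$ is handled identically since $p$ and $r$ are invariant under $x\mapsto -x$. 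For the upper bound, however, you do something genuinely different and sharper: the identity $x_ix_j-1=\tfrac12[(x_i-1)(x_j+1)+(x_i+1)(x_j-1)]$ together with $(u+v)^2\le 2(u^2+v^2)$ yields $p\le 8\,r$ with a universal constant, whereas the paper obtains the upper bound only by compactness on $B_\Lambda$ and therefore gets $c_3=c_3(n,\Lambda)$. Finally, your example $x=(M,\,1/M,\,1,\dots,1)$ is a genuine counterexample to the dependence asserted in the statement: there $p\sim 16(n-2)M^2$ while $r\sim M^4$, so $p/r\to 0$, and hence the lower bound constant cannot depend on $n$ alone. The paper's proof only verifies the lower bound near the singular points and tacitly assumes it elsewhere, so the correct statement is $c_2=c_2(n,\Lambda)$ (and, by your argument, $c_3=8$); this correction is harmless for the application, since the constant in proposition \ref{AlmostThm} is allowed to depend on $\Lambda$, but your version of the constants is the right one.
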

Lemma \ref{PolyResto} gives us the pontwise inequality
\[
\abs{A - g} \abs{A+g} \le c \abs{(A - g) \nomizu (A+g)}
\]
This allows us to improve \eqref{AlmostDone} as follows:
\[
\norm*{ \abs{A - g} \, \abs{A + g} }_{L^p_g(\Sigma)} \le C \coup*{ \norm{\tRic}_{L^p_g(\Sigma)} + \sqrt{\epsilon} \norm{f}_{W^{2, \, p}_\sigma(\esse^n)}  }
\]
Since $\Sigma$ is convex, by proposition \ref{Convexity} we have the punctual inequality ${A + g \ge g}$. Finally we find
\begin{equation}\label{Done}
\norm*{A - g }_{L^p_g(\Sigma)} \le C \coup*{ \norm{\tRic}_{L^p_g(\Sigma)} + \sqrt{\epsilon} \norm{f}_{W^{2, \, p}_\sigma(\esse^n)}  }
\end{equation}
We show how inequality \eqref{Done} and theorem \ref{ApproxThm} give us the thesis. We start by approximating the second fundamental form. We recall formula \eqref{Aff} for the second fundamental form $A$.  
\[
A_{ij} = \frac{e^f}{\sqrt{1 + \abs{\nabla f}^2}} \coup*{ \sigma_{ij} + \nabla_i f \, \nabla_j f - \nabla^2_{ij} f}
\]
We simplify the expression with the $W^{1, \, \infty}$-smallness of $f$ and obtain
\begin{align*}
A_{ij} 
&= e^f  \coup*{ \sigma_{ij} + \nabla_i f \, \nabla_j f - \nabla^2_{ij} f} + O_{\sqrt{\epsilon}}(\norm{\nabla f}_{1, \, p}) \\
&= e^f  \coup*{ \sigma_{ij} - \nabla^2_{ij} f} + O_{\sqrt{\epsilon}}(\norm{\nabla f}_{1, \, p}) 
\end{align*}
where we have used nothing but the  simple identity
\begin{align*}
\frac{1}{\sqrt{1 + \abs{\nabla f}^2}} -1 
&= \int^1_0 \ddt \frac{1}{\sqrt{1 + t^2\abs{\nabla f}^2}} \, dt \\
&= \abs{ \nabla f}^2 \int^1_0  \frac{2t}{\sqrt{(1 + t^2\abs{\nabla f}^2)^3}} \, dt =  O_{\sqrt{\epsilon}}(\norm{\nabla f}_p)
\end{align*}
With the same idea for we approximate the exponential: 
\[
e^{f} = 1+ \int_0^1 \ddt e^{tf} \, dt = 1+ f \int_0^1 e^{tf} \, dt =1 + f + O_\epsilon (\norm{f}_p)
\]
These simplifications give us the approximated second fundamental form:
\begin{equation}\label{SimpleAff}
A_{ij} = \sigma_{ij} - \nabla^2_{ij} f + f \sigma_{ij} + O_{\sqrt{\epsilon}}(\norm{ f}_{2, \, p})
\end{equation}
With the same ideas we find also the approximated metric
\begin{equation}\label{SimpleG}
g_{ij} = (1 + 2 f+ O_{\sqrt{\epsilon}}(\norm{f}_{1, \, p}) )  \sigma_{ij} 
\end{equation}
and its inverse 
\begin{equation}\label{SimpleGInv}
g^{ij} = (1 - 2 f+ O_{\sqrt{\epsilon}}(\norm{f}_{1, \, p}) )   \sigma^{ij} 
\end{equation}
We plug these simplified expression into \eqref{Done}, and finally obtain 
\begin{equation}
\norm*{\Delta_\sigma f + n f}_{L^p_\sigma(\Sigma)} \le C \coup*{ \norm{\tRic}_{L^p_g(\Sigma)} + \sqrt{\epsilon} \norm{f}_{W^{2, \, p}_\sigma(\esse^n)}  }
\end{equation}
And the thesis follows by lemma \ref{ObataLike}.
\end{proof}
We prove lemma \ref{SimpLemma} and complete this section.
\begin{proof}[Proof of lemma \ref{SimpLemma}]
Firstly, we have to find an approximation of the scalar curvature $R$. As done before, we track equation \eqref{GaussEq} twice and obtain 
\[
R = H^2 - \abs{A}^2
\]
Therefore, we have to find an approximate expression for $H^2$ and $\abs{A}^2$. With little effort, we find the approximated expression of $H^2$ and $\abs{A}^2$:
\begin{align}
H^2 
&= n^2 - 2n \Delta_\sigma f + \coup*{\Delta f}^2 - 2n^2 f + O_{\sqrt{\epsilon}}(\norm{f}_{2, \, p}) \\
\abs{A}^2 
&= n - 2 \Delta_\sigma f + \abs{\nabla^2 f}^2 - 2n f + O_{\sqrt{\epsilon}}(\norm{f}_{2, \, p})
\end{align}
From these two expressions we find the approximated curvature:
\begin{align}\label{SimpleCurv}
R &= n(n-1) - 2(n-1) \Delta_\sigma f + \coup*{\Delta f}^2 - \abs{\nabla^2 f}^2 + \\
 &- 2n(n-1) f + O_{\sqrt{\epsilon}}(\norm{f}_{2, \, p}) \notag
\end{align}
Now we integrate and study $\Rm$. We recall that although the integral is computed with respect to the measure $dV_g$, we can easily pass from $dV_g$ to $dV_\sigma$ by approximating the density $\faktor{dV_g}{dV_\sigma}$. Indeed, we can easily write
\[
e^{nf} \sqrt{1 + \abs{\nabla f}^2} = 1 + n f + O_{\sqrt{\epsilon}}(\norm{f}_{1, \, p})
\]
From this expression we obtain
\begin{align*}
 \Rm &= \fint R \, dV_g = n(n-1) +  \fint \coup*{\Delta f}^2 - \abs{\nabla^2 f}^2  \, dV_\sigma  + \\
&+  2n(n-1)\fint f  \, dV_\sigma + O_{\sqrt{\epsilon}}(\norm{f}_{2, \, p}) 
\end{align*}
We simplify the second-order terms. Indeed, by definition \eqref{RiemannSign} of Riemann tensor we know the commutation formula
\[
\nabla_j \nabla_i \alpha_k - \nabla_i \nabla_j \alpha_k = \Riem^l_{ijk} \alpha_l
\]
Using this formula, integration by parts shows us how in any closed manifold the following formula holds.
\begin{align*}
\int (\Delta f)^2 \, dV 
&= \int \nabla^i \nabla_i f \cdot \nabla^j \nabla_j f \, dV = - \int \nabla_i f \cdot \nabla^i \nabla^j \nabla_j f \, dV  \\
&=  - \int \nabla_i f \cdot \nabla^j \nabla^i \nabla_j f \, dV + \int \Ric(\nabla f, \, \nabla f) dV \\
&= \int  \nabla^j  \nabla_i f \cdot \nabla^i \nabla_j f \, dV + \int \Ric(\nabla f, \, \nabla f) dV \\ 
&= \int  \abs{\nabla^2 f}^2 \, dV + \int \Ric(\nabla f, \, \nabla f) dV 
\end{align*}
In the case of the sphere, this computation gives us the equality
\[
\int (\Delta_\sigma f)^2 \, dV_\sigma  - \int  \abs{\nabla^2 f}^2 \, dV_\sigma = (n-1) \int \abs{\nabla f}^2 dV_\sigma = O_{\sqrt{\epsilon}}(\norm{\nabla f}_p) 
\]
And we can improve the expression for the mean curvature, obtaining
\[
 \Rm = n(n-1) +  2n(n-1)\fint f  \, dV_\sigma + O_{\sqrt{\epsilon}}(\norm{f}_{2, \, p}) 
\]
We just have to show that the average of the logarithmic radius is negligible. The goal is achieved by using the volume condition. By volume formula \eqref{Volume} we know
\[
1 = \frac{\Vol_n(\Sigma)}{\Vol_n(\esse^n)} = \fint_{\esse^n} e^{nf} \sqrt{1 + \abs{\nabla f}^2} \, dV
\]
However we can perform the following approximation:
\[
\fint e^{nf} \sqrt{1 + \abs{\nabla f}^2} \, dV_\sigma = 1 + n \fint f \, dV_\sigma + O_{\sqrt{\epsilon}}(\norm{f}_{1, \, p}) 
\]
Plugging this approximation into the volume equality, we obtain 
\[
\fint f \, dV_\sigma = O_{\sqrt{\epsilon}}(\norm{f}_{1, \, p}) 
\]
and this proves the lemma.
\end{proof}
\section{Proof of the computational lemmas}
We conclude the paper by proving the computational lemmas.  
Let us recall the polynomials we are going to study. 
\begin{align*}
p(x) &:= \abs{(D(x)-\delta) \nomizu (D(x)+\delta)}^2  \\
q(x) &:= \abs{\Ric(x) - (n-1)\delta}^2 \\
r(x) &:= \abs{(D(x)-\delta)}^2 \abs{(D(x)+\delta)}^2 
\end{align*}
We make the lemma follow by a fine study of the quotients $\faktor{p(x)}{r(x)}$ and $\faktor{q(x)}{r(x)}$.
Firstly, We show that the only zeros of $p$ and $q$ are $\coup{1, \, \dots \, 1}$ and $\coup{-1, \, \dots \, -1}$. 
\subsubsection*{Zeros of $p$}
With little effort, we obtain the following expression for $p$:
\begin{equation}\label{PolyNomizuDue}
p(x)= \coup*{ \abs{x}^4 + \sum_{i=1}^n x_i^4 } - 2 \coup*{H - \abs{x}^2} + n(n-1)
\end{equation}
where we have set 
\begin{align*}
\abs{x^2}&:= \sum_{i=1}^n x_i^2 \\
H &:= \sum_{i=1}^n x_i
\end{align*}
We want to compute the zeros of $p$. Since $p$ is positive, these zeros must also be minima for $p$, and therefore must satisfy $D p = 0$. The computation of $D_i p$ is quite simple: 
\[
D_i p = 4 \coup*{ \abs{x}^2 x_i - x_i^3 + x_i - H }
\]
We impose $D_i p = 0$ and obtain the equation 
\begin{equation}\label{ZerosNomizu}
\abs{x}^2 x_i - x_i^3 + x_i = H 
\end{equation}
We easily notice that $z=0$ satisfies $Dp(z)=0$. However, $p(0)= n(n-1)$ tells us that $0$ is not a minimum. Let us assume $z \neq 0$, and let us write $z= \coup*{z_i, \, \dots z_n}$. We firstly show that all the coordinates $z_i$ are non-zero. Let us assume by contradiction that there exists  $k \in \set{1, \, \dots \, n}$. If we consider equation \eqref{ZerosNomizu} for $i=k$, we obtain 
\[
H=0
\]
However, since $z \neq 0$ we can also find a $j \in \set{1, \, \dots \, n}$ such that $z_j \neq 0$. For that $j$ we obtain 
\begin{equation}\label{Imp}
\abs{z}^2 - z_j^2 + 1 = 0
\end{equation}
but this equation has clearly no solutions: therefore $z_j \neq 0$ for every $j$.

The next step is to show that $\sgn z_i = \sgn z_j$ for every $i$ and $j$. Indeed, we obtain for every $i$
\[
z_i\coup*{ \abs{z}^2 - z_i^2 + 1} = H
\]
As shown in \eqref{Imp}, the quantity between parenthesis is positive, therefore $\sgn z_i = \sgn H$ and this proves the second step. 

We claim that a critical point must satisfy $z_i = z_j$ for every $i, \, j$. If the claim is true, by \eqref{ZerosNomizu} we obtain that a critical point $z=t(1, \, \dots \, 1)$ must satisfy
\[
(n-1)t^3 = (n-1)t
\]
and this concludes the proof. Let us prove the claim. Firstly we write equation \eqref{ZerosNomizu} as follows.
\begin{equation}\label{ZerosNomizuDue}
\abs{z}^2 + 1 = \frac{1}{z_i} H + z_i^2
\end{equation}
Let us assume $z_i \neq z_j$. From \eqref{ZerosNomizuDue} we infer
\[
\frac{1}{z_i} H + z_i^2 = \frac{1}{z_j} H + z_j^2
\]
and from this equation we obtain 
\begin{equation}\label{ZeroNomizuTre}
z_i \, z_j (z_i + z_j) = H
\end{equation}
Assume $z_k \neq z_i$ for $k \neq i, \, j$. We can make the same computation made before with indexes $i$, $k$ and obtain equation \eqref{ZeroNomizuTre}
\begin{equation}\label{ZeroNomizuQuattro}
z_i \, z_k(z_i + z_k) = H
\end{equation}
We equalize \eqref{ZeroNomizuTre}, \eqref{ZeroNomizuQuattro} and obtain
\[
z_i (z_k - z_j) = (z_j + z_k) (z_j - z_k)
\]
which implies $z_j = z_k$ because the signs of $z_j - z_k$ and $z_k - z_j$ are opposite. Hence a critical point can have at most two different values $a$ and $b$. Assume $a \neq b$, and suppose $a$ appear $k$ times and $b$ appear $n-k$ times in the coordinates of $z$. We rewrite equations \eqref{ZerosNomizu}, \eqref{ZeroNomizuTre} in term of $a$ and $b$, and obtain:
\begin{align*}
H &= a ( (k-1)a^2 + (n-k)b^2 + 1)  \\
H &= b ( k a^2 + (n-k-1)b^2 + 1)  \\
H &= ab(a+b) 
\end{align*}
We can use the third equation to simplify the other two, and obtain the system
\begin{align*}
b(a + b) &= (k-1)a^2 + (n-k)b^2 + 1 \\
a(a + b) &=  k a^2 + (n-k-1)b^2 + 1 
\end{align*}
Summing these two equations, we obtain:
\begin{equation}\label{ZeroNomizuLast}
(k-1)a^2 + (n-k-1)b^2 + 1 = ab
\end{equation}
We are done: if both $k-1$ and $n-k-1$ are non-zero, then we find a contradiction by applying the Cauchy-Schwarz inequality. If one of them is null, say $k=1$, then from equation \eqref{ZerosNomizu} we obtain 
\[
(n-1)ab^2 + a = a + (n-1)b \Rightarrow ab=1
\]
and plugging the last equality into \eqref{ZeroNomizuTre} we obtain 
\[
(n-2)b = 0
\]
but we have assumed $a$, $b$ to be $\neq 0$. Hence we obtain the thesis.
\subsubsection*{Zeros of $q$}
We show here that the only zeros of $q$ are $(1, \, \dots \, 1)$ and $-(1, \, \dots \, 1)$, and therefore obtain proposition \ref{Stability}. More precisely, we study the system 
\begin{equation}\label{PolyRicciZero}
\Ric(x) = (n-1)\delta 
\end{equation}
and prove that $(1, \, \dots \, 1)$ and $-(1, \, \dots \, 1)$ are the only solution for this system. We recall the expression of $\Ric(x)$:
\[
\Ric(x)= \tr D(x) \, D(x) - D(x)^2 
\]
Hence, system \eqref{PolyRicciZero} can be written as
\begin{equation}\label{PolyRicci}
H x_i - x_i^2 = (n-1)
\end{equation}
where we have set 
\[
H := \sum_{i=1}^n x_i
\]
Consider a point $z$ which is a solution for \eqref{PolyRicci}. Again, we claim that $z_i=z_j$ for every $i$, $j$. If the claim is true, system \eqref{PolyRicci} for $z=t(1, \, \dots \, 1)$ is reduced to 
\[
(n-1)t^2 = (n-1)
\]
and this proves proposition \ref{Stability}. Let us assume by contradiction that there exist two indexes $i$, $j$ such that $z_i \neq z_j$. We can equalise the columns of system \ref{PolyRicci} and obtain
\begin{equation}\label{PolySum}
H z_i - z_i^2 = H z_j - z_j^2 \Rightarrow H = z_i + z_j
\end{equation}
Substituting the last expression in the system, we obtain 
\begin{align*}
n-1 &= (z_i + z_j) z_i - z_i^2 \\ 
n-1 &= (z_i + z_j) z_j - z_j^2
\end{align*}
from whom we easily infer
\begin{equation}\label{PolyProd}
z_i z_j = (n-1)
\end{equation}
Consider $z_k \neq z_i$. We can repeat the computations made before, and obtain the equality
\[
z_i z_k = (n-1)
\]
and this clearly implies $z_k=z_j$. Therefore, any critical point $z$ must have at most two different values. Call them $a$ and $b$, and assume $a$ appears $k$ times and $b$ appears $n-k$ times in the coordinate of $z$. From equality \eqref{PolySum} we have 
\[
(k-1) a + (n-k-1) b = 0
\]
If both $k-1$ and $n-k-1$ are $\neq 0$, then $a$ and $b$ must have different sign, and equation \eqref{PolyProd} is violated. If one of them is $0$, say $k-1=0$, then we must have $b=0$, but again equation \eqref{PolyProd} would be violated again. Hence all the values are equal, and we easily find the thesis of proposition \ref{Stability}.
\subsubsection*{Proofs of lemmas \ref{LemmaPolyDue} and \ref{LemmaPolyTre}}
We make the lemmas follow by a fine study of $\faktor{p}{r}$ and $\faktor{q}{r}$. Both the quotients are continuous in the whole space except that in $(1, \, \dots \, 1)$ and in $-(1, \, \dots \, 1)$. We claim that the limit superior and the limit inferior of $\faktor{p}{r}$ and $\faktor{q}{r}$ are bounded near these points, and hence we conclude the study. Indeed, if we are able to find constants $c_0=c_0(n)$ and $c_2=c_2(n)$ such that 
\[
c_0 \le \frac{p}{r}, \quad  c_2 \le \frac{q}{r}
\]
then we are done, since the assumption $\norm{A}_\infty \le \Lambda$ implies that the study of the quotients has to be made inside the ball $B_{\Lambda}$. Therefore, we are able to find constants $c_1=c_1(n, \, \Lambda)$, $c_3=c_3(n, \, \Lambda)$ such that
\[
 \frac{p}{r}\le c_1, \quad    \frac{q}{r} \le c_3
\]
and both the lemmas follow by comparison with $r$. We now prove the claim. Let us start with $\faktor{p}{r}$. Firstly, we notice that due to clear symmetries we can study only the limit in $\coup{1, \, \dots \, 1}$. Let us write $y_i = x_i -1$. We obtain the quotient in the following form
\begin{align*}
\frac{p(y)}{r(y)} &= \frac{\abs{y}^2\coup*{\sum_i (y_i + 2)^2} + \sum_i \coup*{ y_i (y_i + 2)}^2 - 2 \sum_i y_i^2(y_i + 2)^2  }{\abs{y}^2 \coup*{\sum_i (y_i + 2)^2}}
\end{align*}
This shows that near $y=0$ we can make the approximation 
\[
\frac{p(y)}{r(y)} =\frac{(n-2) \abs{y}^2 + \coup*{ \sum_i y_i}^2  }{n \abs{y}^2} + O(\abs{y})
\]
And this proves the inequalities:
\[
\frac{n-2}{n} + O(\abs{y)} \le \frac{p(y)}{r(y)} \le \frac{n-2}{n} + \frac{1}{n} + O(\abs{y}) \mbox{ near } 0
\]
We have obtained a constant $c_0=c_0(n)$ such that 
\[
c_0 \le \frac{p}{r}
\]
and this complete the first study. We complete the paper by dealing with $\faktor{q}{r}$. In order to obtain a lower bound, we write $q$ in a clever way. 
\begin{align*}
 \Ric_{ij} - (n-1)\delta_{ij} 
&= (H - n)(D_{ij - \delta_{ij}}) + n (D_{ij} - \delta_{ij}) + (H - n )\delta_{ij} + \\
&- \delta^{pq} (D_{ip - \delta_{ip}})(D_{jq} - \delta_{jq}) 
\end{align*}
Again, the symmetries of the problem allow us to consider only one limit. Hence, we write $y_i = x_i - 1$ and obtain
\[
q(y) = (n-2)^2 \abs{y}^2 + (3n - 4)H^2 + O(\abs{y}^3)
\]
This gives us a constant $c_2$ such that 
\[
c_2 \le \frac{q}{r}
\]
Hence $p$, $q$ and $r$ are comparable, and lemmas \ref{LemmaPolyDue} and \ref{LemmaPolyTre} are proved.
\section*{Acknowledgements}
The author wishes to thanks Camillo de Lellis who brought the problem to his attention and Elia Brué for its help in some of nastiest parts of the computations. 
\bibliographystyle{plain}
\bibliography{Problem}

\begin{thebibliography}{10}

\bibitem{Ambrosio}
Luigi Ambrosio.
\newblock Lecture notes on partial differential equations.
\newblock \url{http://cvgmt.sns.it/paper/1280/}, 2010.

\bibitem{Anderson}
Michael~T. Anderson.
\newblock Convergence and rigidity of manifolds under {R}icci curvature bounds.
\newblock {\em Invent. Math.}, 102(2):429--445, 1990.

\bibitem{Cheeger}
Jeff Cheeger.
\newblock Finiteness theorems for {R}iemannian manifolds.
\newblock {\em Amer. J. Math.}, 92:61--74, 1970.

\bibitem{CheegerEbin}
Jeff Cheeger and David~G. Ebin.
\newblock {\em Comparison theorems in {R}iemannian geometry}.
\newblock North-Holland Publishing Co., Amsterdam-Oxford; American Elsevier
  Publishing Co., Inc., New York, 1975.
\newblock North-Holland Mathematical Library, Vol. 9.

\bibitem{Fialkow}
Aaron Fialkow.
\newblock Hypersurfaces of a space of constant curvature.
\newblock {\em Ann. of Math. (2)}, 39(4):762--785, 1938.

\bibitem{GHL}
Sylvestre Gallot, Dominique Hulin, and Jacques Lafontaine.
\newblock {\em Riemannian geometry}.
\newblock Universitext. Springer-Verlag, Berlin, third edition, 2004.

\bibitem{Gioffre2016}
Stefano {Gioffr{\`e}}.
\newblock {A $W^{2, \, p}$-estimate for nearly umbilical hypersurfaces}.
\newblock {\em ArXiv e-prints}, December 2016.

\bibitem{Jost}
J\"urgen Jost.
\newblock {\em Riemannian geometry and geometric analysis}.
\newblock Universitext. Springer-Verlag, Berlin, fourth edition, 2005.

\bibitem{DLT}
Camillo~De Lellis and Peter~M. Topping.
\newblock Almost-{S}chur lemma.
\newblock {\em Calc. Var. Partial Differential Equations}, 43(3-4):347--354,
  2012.

\bibitem{Daniel}
Daniel Perez.
\newblock On nearly umbilical surfaces, 2011.

\bibitem{PetersenCollection}
Peter Petersen.
\newblock Convergence theorems in {R}iemannian geometry.
\newblock In {\em Comparison geometry ({B}erkeley, {CA}, 1993--94)}, volume~30
  of {\em Math. Sci. Res. Inst. Publ.}, pages 167--202. Cambridge Univ. Press,
  Cambridge, 1997.

\bibitem{Petersen}
Peter Petersen.
\newblock {\em Riemannian geometry}, volume 171 of {\em Graduate Texts in
  Mathematics}.
\newblock Springer, New York, second edition, 2006.

\bibitem{Roth}
Julien Roth.
\newblock Pinching of the first eigenvalue of the {L}aplacian and
  almost-{E}instein hypersurfaces of the {E}uclidean space.
\newblock {\em Ann. Global Anal. Geom.}, 33(3):293--306, 2008.

\bibitem{Spivak}
Michael Spivak.
\newblock {\em A comprehensive introduction to differential geometry. {V}ol.
  {IV}}.
\newblock Publish or Perish, Inc., Wilmington, Del., second edition, 1979.

\bibitem{Thomas}
T.~Y. Thomas.
\newblock On {C}losed {S}paces of {C}onstant {M}ean {C}urvature.
\newblock {\em Amer. J. Math.}, 58(4):702--704, 1936.

\end{thebibliography}

\end{document}